\documentclass[10pt,a4paper]{article}
\usepackage{amsmath}
\usepackage[utopia]{mathdesign}

\usepackage{cite}
\usepackage[left=2.5cm,right=2.5cm,top=2.5cm,bottom=2.5cm]{geometry}
\usepackage{graphicx,xcolor}
\def\R{\mathbb{R}}
\def\N{\mathbb{N}}

\usepackage{enumerate} 
\usepackage[shortlabels]{enumitem} 
\numberwithin{equation}{section}

\usepackage[thref,thmmarks,standard,amsmath,hyperref]{ntheorem}
\usepackage[unicode]{hyperref}
\theoremheaderfont{\normalfont\bfseries}
\theorembodyfont{\normalfont}
\theoremseparator{.}

\theorembodyfont{\itshape}
\renewtheorem{theorem}{Theorem}[section]
\renewtheorem{proposition}{Proposition}[section]
\renewtheorem{lemma}{Lemma}[section]
\renewtheorem{corollary}{Corollary}[section]

\theorembodyfont{\normalfont}
\renewtheorem{definition}{Definition}[section]
\renewtheorem{remark}{Remark}[section]
\renewtheorem{example}{Example}[section]

\theoremstyle{plain}
\theoremstyle{nonumberplain}
\theoremsymbol{\ensuremath{\square}}
\renewtheorem{proof}{Proof}

\makeatletter
\let\c@proposition\c@theorem

\let\c@lemma\c@theorem

\let\c@corollary\c@theorem

\let\c@definition\c@theorem

\let\c@remark\c@theorem

\let\c@example\c@theorem

\makeatother

\title{\textbf{Convex Generalized Differentiation at infinity}}
\author{Nguyen Xuan Duy Bao\thanks{Independent Researcher, Ho Chi Minh City, Vietnam (nxdbao@gmail.com)} ~, 
Nguyen Mau Nam\thanks{Fariborz Maseeh Department of Mathematics and Statistics, Portland State University, Portland, OR, USA (mnn3@pdx.edu)}}
\date{\today}

\newcommand{\cl}{{\rm cl}}
\newcommand{\bd}{{\rm bd}}

\newcommand{\dom}{{\rm dom}}
\newcommand{\rge}{{\rm rge}}
\newcommand{\gph}{{\rm gph}}
\newcommand{\cone}{{\rm cone}}
\newcommand{\inte}{{\rm int}}
\newcommand{\conv}{{\rm conv}}
\newcommand{\epi}{{\rm epi}}

\newcommand {\Limsup} {\mathop{{\rm Lim\,sup}\,}}
\newcommand {\Liminf} {\mathop{{\rm Lim\,inf}\,}}

\begin{document}
\maketitle
\thispagestyle{empty}

\begin{abstract}
In this paper, we develop a generalized differentiation theory at infinity for convex sets and functions. In particular, we study several fundamental notions of convex analysis at infinity, including tangent cones, normal cones, and subdifferentials. Our work complements the recently developed theories of nonsmooth analysis at infinity by focusing on the convex setting, where these constructions preserve convexity and admit natural connections with recession analysis, polarity, and epigraphical geometry. We show that, in the convex case, many results can be established under weaker assumptions and admit simpler and more explicit representations than those available in the general nonsmooth framework. In addition, we develop calculus rules and geometric characterizations for these constructions and apply them to optimality conditions and attainment criteria for convex optimization problems over unbounded feasible sets. The results obtained in this paper provide new tools for the study of convex sets and functions at infinity and further strengthen the connections between convex analysis, variational analysis, and optimization.

\end{abstract}

\noindent\textbf{Keywords:} tangent cone at infinity; normal cone at infinity; subdifferential at infinity; convex analysis; recession cone.

\noindent\textbf{Mathematics Subject Classification (2020):} 49J52, 49J53, 90C25.

\section{Introduction}
Central concepts of nonsmooth analysis such as tangent cones, normal cones, and subdifferentials are traditionally formulated at reference points in the underlying space; see, e.g., \cite{rockafellar-convex,aubin,nambook,rockafellar-wets}. However, in many optimization and variational problems, the relevant first-order behavior is asymptotic rather than local. This occurs, for instance, when a function is bounded below but fails to attain its infimum, or when the geometry of a feasible set is determined by directions of escape and asymptotic tangency. In such situations, classical tools of nonsmooth analysis at finite points are no longer sufficient. This  motivativates the development of a theory of generalized differentiation at infinity.

The asymptotic viewpoint has been known in convex and nonsmooth analysis. For example, recession cones and asymptotic cones have been widely used to study the existence of minimizers, boundedness of solution sets, and coercivity in noncompact optimization problems; see, e.g., \cite{rockafellar-convex,aubin,nambook,rockafellar-wets}. More recently, researchers have begun to develop a systematic theory of variational analysis at infinity. In \cite{tung23a}, Nguyen and Pham introduced Clarke-type tangent cones, normal cones, subgradients, optimality conditions, and Lipschitz properties at infinity. Later, Kim, Nguyen, and Pham \cite{tung23b} developed a Mordukhovich-type framework based on normal cones and limiting and singular subdifferentials at infinity, together with calculus rules and applications to optimization. These ideas have since been extended to minimax programming, coderivatives of set-valued mappings, and directional and relative constructions at infinity; see, for example, \cite{tuyen-bae-kim-minimax,kim-pham-tung-tuyen-coderivative,kien-tuyen-nghi-directional,anh-hung-relative,anh-hung-jgo}.

These theories were developed for general nonsmooth settings and therefore do not fully exploit the additional structure available in convex analysis. In particular, even for convex sets, limiting normal cones at infinity may fail to be convex (Example~\ref{ex: nonconvex normal cone}, Remark~\ref{rem: limiting normal nonconvex}), which limits their compatibility with polarity, duality, and epigraphical geometry. Moreover, several results in the convex setting can be established under weaker assumptions and admit simpler representations.

The present paper studies tangent cones, normal cones, and subdifferentials at infinity in the setting of convex sets and functions. Our focus is on the convex case, where these objects remain convex and possess natural connections with polarity and recession constructions. This additional structure leads to more transparent formulas and simplified proofs; see Proposition~\ref{prop: cones at infinity}, Proposition~\ref{prop: basic properties}, Remark~\ref{rem: recession strict}, and Examples~\ref{ex: nonconvex tangent reverse} and~\ref{ex: linear-image-nonconvex}.

The main contributions of the paper are as follows.

\begin{itemize}
\item For convex sets, we characterize tangent and normal cones at infinity in terms of their finite-point counterparts (Proposition~\ref{prop: cones at infinity}, Proposition~\ref{prop: basic properties}) and relate them to recession geometry. In particular, Proposition~\ref{prop: recession inclusion} establishes the inclusion of recession directions in tangent directions at infinity, while Remark~\ref{rem: recession strict} shows that the inclusion may be strict.

\item For polyhedral convex sets, we obtain an explicit description of tangent and normal cones at infinity in terms of asymptotically active constraints (Theorem~\ref{prop: polyhedral cones at infinity}), together with a verifiable criterion for the equality $T(\infty_I;\Omega)=\Omega^\infty$ (Remark~\ref{rem: polyhedral recession equality}). We also establish product, intersection, and linear-image rules (Proposition~\ref{prop: normal product rules}, Proposition~\ref{prop: normal intersection rule}, Proposition~\ref{prop: normal linear image}), along with polyhedral refinements. Examples~\ref{ex: nonconvex tangent reverse} and~\ref{ex: linear-image-nonconvex} show that some of these results are specific to the convex setting.

\item For convex functions, we introduce subdifferentials at infinity through the asymptotic normal geometry of the epigraph (Definition~\ref{def: subdiff infinity}). We show that the subdifferential at infinity is a closed convex set and that the singular subdifferential at infinity is a closed convex cone (Proposition~\ref{prop: basic subdiff at infinity}), providing a convex counterpart to the limiting theory developed in \cite{tung23b}.

\item Using the Fenchel conjugate $f^*$ and the recession function $f^\infty$, we further characterize these constructions. Proposition~\ref{prop: geometry of singular} establishes the inclusions
\[
    N(\infty;\dom f)\subset\partial^\infty f(\infty)\subset (\dom f^\infty)^\circ,
\]
while Theorem~\ref{prop: geometry of subdifferential} shows that
\[
    \partial f(\infty)\subset \cl(\dom f^*),
\]
with equality whenever $f$ is sublinear. We also derive sum, chain, maximum, infimal-convolution, and marginal-function rules (Propositions~\ref{prop: sum rule same-variable}--\ref{prop: marginal at infinity}).

\item We introduce a Clarke-type directional derivative at infinity and establish its connections with the subdifferential, the singular subdifferential, and the recession function (Proposition~\ref{prop: clarke deriv epi}, Proposition~\ref{prop: clarke deriv support}, Proposition~\ref{prop: clarke deriv vs recession}).

\item As an application, we develop a multiplier rule at infinity (Theorem~\ref{thm: multiplier rule at infinity}) and an attainment criterion based on the absence of asymptotic stationarity (Theorem~\ref{thm: attainment infinity}) for convex programs over unbounded feasible sets. These results are supported by a verifiable asymptotic Slater-type condition (Proposition~\ref{thm: asymptotic Slater}) and a KKT-type normal-cone decomposition (Corollary~\ref{cor: KKT at infinity}).

\end{itemize}

The paper is organized as follows. Section~2 reviews the preliminaries used throughout the paper. Section~3 studies tangent and normal cones at infinity for convex sets, emphasizing their connections with finite-point constructions, polarity, recession geometry, and calculus rules. Section~4 introduces subdifferentials at infinity for convex functions, develops their basic properties and characterizations via the Fenchel conjugate and the recession function, and establishes calculus rules together with directional-derivative formulas. Section~5 applies these results to multiplier rules and attainment criteria for convex programs over unbounded feasible sets under an asymptotic Slater-type condition. The final section contains concluding remarks and directions for future research.

\section{Preliminaries}
Throughout the paper, \(\R^n\) is equipped with the usual inner product \(\langle x,y\rangle:=\sum_{i=1}^n x_i y_i\) and the associated Euclidean norm \(\|x\|:=\sqrt{\langle x,x\rangle}\). For $x\in\R^n$ and $r>0$, the open ball of radius $r$ centered at $x$ is denoted by $B(x;r)$. For a set $K\subset\R^n$, we denote by $\cl K$ its topological closure, by $\inte K$ its topological interior, and by $\cone K:=\{\lambda x\mid \lambda\ge 0,\ x\in K\}$ its conic hull. The set $K$ is called a nonempty cone if \(\lambda x\in K\) for all \(x\in K\) and all \(\lambda\ge 0\). In this section we collect a few notions and basic facts that will be used repeatedly later: projections and distance to convex sets, polarity for convex cones, and several standard constructions from variational analysis. Given a nonempty set $C \subset \R^n$, define the \emph{distance to $C$}, the \emph{indicator function}, and the \emph{support function} of $C$, respectively, by
    \[
    {\rm d}(w; C) := \inf_{y\in C}\|w-y\|,\quad
    \delta_C(x):=
    \begin{cases}
    0,&x\in C,\\
    \infty,&x\notin C,
    \end{cases}
    \quad
    \sigma_C(x):=\sup_{u\in C}\langle u,x\rangle.
    \]
If $C$ is nonempty, closed, and convex, then for every $w \in \R^n$ there exists a unique projection point $P_C(w)\in C$ such that
    \[
    {\rm d}(w; C)=\|w-P_C(w)\|.
    \]
Next, we recall the polar construction. The \emph{polar} of $C$ is
    \[
    C^{\circ} := \{v \in \R^n \mid \langle v, x \rangle \le 1 \;\ \text{for all} ~ x \in C\}.
    \]
The set $C^{\circ}$ is nonempty, closed, and convex. The polar of $C^{\circ}$, denoted by $C^{\circ\circ}$, is called the \emph{bipolar} of $C$. When $C$ is a cone, the polar has the following representation:
\[
C^{\circ} = \{v \in \R^n \mid \langle v, x \rangle \le 0 \;\ \text{for all} ~ x \in C\}.
\]
We recall some standard polarity rules. The \emph{bipolar theorem} for a closed convex cone, asserting $C^{\circ\circ}=C$, is given in \cite[Proposition~5.23]{nambook}. The monotonicity and product-to-intersection rules below appear as \cite[Lemma~1.24(a),(b)]{jahnbook}, and the dual identity $(C_1\cap C_2)^\circ=\cl(C_1^\circ+C_2^\circ)$ follows by taking polars and invoking the bipolar theorem on both sides.

\begin{lemma} \label{lem: polarity rules}
Let $C_1,C_2 \subset \R^n$ be nonempty convex cones, and let $C \subset \R^n$ be a nonempty closed convex cone. Then:
\begin{enumerate}[\rm (i)]
    \item $C^{\circ\circ}=C$.
    \item $C_1\subset C_2 \ \Longrightarrow\ C_2^\circ\subset C_1^\circ$.
    \item
    \(
    (C_1+C_2)^\circ=C_1^\circ\cap C_2^\circ
    \)
    \item 
    \( (C_1\cap C_2)^\circ=\cl\big(C_1^\circ+C_2^\circ\big).
    \)
\end{enumerate}
\end{lemma}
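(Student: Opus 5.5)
The plan is to treat (i) and (ii) as already available from the literature, and to derive (iii) directly from the definition and then obtain (iv) from (iii) by polarity.

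Part (i) is the bipolar theorem for a closed convex cone \cite[Proposition~5.23]{nambook}, and we will simply invoke it. Part (ii) is immediate from the definition: if $v\in C_2^\circ$, then $\langle v,x\rangle\le 0$ for all $x\in C_2\supset C_1$, so $v\in C_1^\circ$.

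For (iii), we argue both inclusions.
\begin{itemize}
\item Since $C_1,C_2$ are cones, $0\in C_1\cap C_2$, so $C_i\subset C_1+C_2$. Then (ii) gives $(C_1+C_2)^\circ\subset C_1^\circ\cap C_2^\circ$.
\item Conversely, if $v\in C_1^\circ\cap C_2^\circ$ and $x=x_1+x_2$ with $x_i\in C_i$, then $\langle v,x\rangle=\langle v,x_1\rangle+\langle v,x_2\rangle\le 0$. Since $C_1+C_2$ is a cone, the cone form of the polar applies, and the reverse inclusion follows.
\end{itemize}

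For (iv), we apply (iii) to the convex cones $C_1^\circ$ and $C_2^\circ$, which gives
\[
(C_1^\circ+C_2^\circ)^\circ=C_1^{\circ\circ}\cap C_2^{\circ\circ}.
\]
Taking polars once more, the left side becomes $(C_1^\circ+C_2^\circ)^{\circ\circ}$. This equals $\cl(C_1^\circ+C_2^\circ)$ by (i), because the bipolar of a convex cone $K$ is $\cl K$: polars are unchanged by closure, and $\cl K$ is a closed convex cone.

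The main obstacle is the right side. The statement assumes only that $C_1,C_2$ are nonempty convex cones, not closed ones, so $C_i^{\circ\circ}=\cl C_i$ rather than $C_i$. The argument therefore yields
\[
(\cl C_1\cap\cl C_2)^\circ=\cl(C_1^\circ+C_2^\circ),
\]
and to reach $(C_1\cap C_2)^\circ$ we would need $\cl C_1\cap\cl C_2=\cl(C_1\cap C_2)$. That equality can fail for nonclosed cones. For example, in $\R^2$ take $C_1=\{(x,y)\mid y>0\}\cup\{0\}$ and $C_2=\{(x,0)\mid x\ge 0\}$. Then $C_1\cap C_2=\{0\}$, whose polar is $\R^2$, whereas $\cl(C_1^\circ+C_2^\circ)$ is the closed half-plane $\{x\le 0\}$. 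So (iv) is false as stated without closedness.

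I would therefore prove (iv) under the hypothesis that $C_1,C_2$ are closed, which is the form the paper uses later for tangent and normal cones of closed convex sets; this is consistent with the paper's remark that (iv) follows from the bipolar theorem applied on both sides. The limiting step, and the one I expect to need care, is applying the bipolar identity $K^{\circ\circ}=\cl K$ to the possibly nonclosed sum $C_1^\circ+C_2^\circ$.
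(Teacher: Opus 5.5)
Your argument matches the paper's: (i)--(iii) are the standard facts the paper cites, and (iv) is obtained exactly as the paper indicates, by applying (iii) to $C_1^\circ,C_2^\circ$ and invoking the bipolar theorem on both sides. Your caveat is also correct: the step $C_i^{\circ\circ}=C_i$ requires $C_1,C_2$ to be closed, and your counterexample is valid (there $(C_1\cap C_2)^\circ=\R^2$ while $\cl(C_1^\circ+C_2^\circ)=\{(a,b)\mid a\le 0\}$), so (iv) is false as literally stated; the closedness you add is what the paper tacitly uses, and it holds in all of the paper's applications, where $C_1,C_2$ are the closed tangent cones $T(x;\Omega_\ell)$ or $T(\infty_I;\Omega_\ell)$.
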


We will also use a basic relation between a cone and its polar given by the Moreau decomposition~\cite{moreau1962}. For a nonempty closed convex cone $C \subset \R^n$ and any $w\in\R^n$,
    \begin{equation} \label{eq: moreau}
    w = P_C(w) + P_{C^\circ}(w), \quad \langle P_C(w),\,P_{C^\circ}(w)\rangle = 0,
    \end{equation}
and
    \begin{equation} \label{eq: polar distance}
    {\rm d}(w; C)= \|w-P_C(w)\| =\|P_{C^\circ}(w)\|.
    \end{equation}
The next lemma is an immediate consequence of \eqref{eq: moreau} and \eqref{eq: polar distance}; see also \cite{nambook}.

\begin{lemma} \label{lemma: moreau}
Let $C \subset \R^n$ be a nonempty closed convex cone. Then
\begin{align}
    &\langle u,w\rangle \le \|u\|\,{\rm d}(w; C) \;\ \text{for all} ~ u\in C^\circ ~ \text{and} ~ w\in\R^n; \label{eq: polar bound} \\[6pt]
    &{\rm d}(w; C) = \sup\{\langle s,w\rangle \mid s\in C^\circ,\ \|s\|\le 1\}. \label{eq: distance supremum}
\end{align}
\end{lemma}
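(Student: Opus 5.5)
The plan is to derive both inequalities directly from the Moreau decomposition \eqref{eq: moreau} and the distance identity \eqref{eq: polar distance}. Throughout, fix $w\in\R^n$ and write $p:=P_C(w)$ and $q:=P_{C^\circ}(w)$. Then $w=p+q$, $\langle p,q\rangle=0$, and ${\rm d}(w;C)=\|q\|$.

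For \eqref{eq: polar bound}, take $u\in C^\circ$ and split
\[
\langle u,w\rangle=\langle u,p\rangle+\langle u,q\rangle .
\]
Since $p\in C$ and $u\in C^\circ$, and $C$ is a cone, the polar representation gives $\langle u,p\rangle\le 0$. The Cauchy--Schwarz inequality then yields
\[
\langle u,w\rangle\le \langle u,q\rangle\le \|u\|\,\|q\|=\|u\|\,{\rm d}(w;C).
\]

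For \eqref{eq: distance supremum}, the bound just proved, applied to any $s\in C^\circ$ with $\|s\|\le 1$, shows that the supremum is at most ${\rm d}(w;C)$. For the reverse inequality, first consider the case $q=0$. Then ${\rm d}(w;C)=0$, and the choice $s=0\in C^\circ$ attains the value $0$. Otherwise, set $s:=q/\|q\|$. This vector lies in $C^\circ$ because $C^\circ$ is a cone, and $\|s\|=1$. Using the orthogonality $\langle p,q\rangle=0$,
\[
\langle s,w\rangle=\frac{\langle q,p\rangle+\|q\|^2}{\|q\|}=\|q\|={\rm d}(w;C).
\]
Hence the supremum equals ${\rm d}(w;C)$, and it is attained.

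The only delicate point is the justification of \eqref{eq: moreau} itself, which we take as given from \cite{moreau1962}. It uses that $C^\circ$ is a closed convex cone, so that $P_{C^\circ}$ is well defined; it also implicitly uses $C^{\circ\circ}=C$ from Lemma~\ref{lem: polarity rules}(i). Beyond that, the argument only requires handling the degenerate case $q=0$ separately, as done above.
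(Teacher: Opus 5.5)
Your proposal is correct and follows essentially the same route as the paper. Both arguments split $w=P_C(w)+P_{C^\circ}(w)$ via the Moreau decomposition and drop the nonpositive term $\langle u,P_C(w)\rangle$ before applying Cauchy--Schwarz; both then attain the supremum with $s=P_{C^\circ}(w)/\|P_{C^\circ}(w)\|$, handling $P_{C^\circ}(w)=0$ separately.
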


\begin{proof}
Fix any $u\in C^\circ$ and $w \in \R^n$. By \eqref{eq: moreau} we have $w=P_C(w)+P_{C^\circ}(w)$. Since $u\in C^\circ$ and $P_C(w)\in C$, we see that $\langle u,P_C(w)\rangle\le 0$. Hence
    \[
    \langle u,w\rangle
    =\langle u,P_C(w)\rangle+\langle u,P_{C^\circ}(w)\rangle
    \le \langle u,P_{C^\circ}(w)\rangle
    \le \|u\|\,\|P_{C^\circ}(w)\|.
    \]
Then \eqref{eq: polar bound} follows directly from \eqref{eq: polar distance}. 

Set $p := P_{C^\circ}(w)$ so that ${\rm d}(w; C)=\|p\|$ by \eqref{eq: polar distance}. Fix any $s \in C^\circ$ such that $\|s\|\le 1$. Then
    \[
    \langle s,w\rangle
    =\langle s,P_C(w)\rangle+\langle s,p\rangle
    \le \langle s,p\rangle
    \le \|s\|\,\|p\|
    \le \|p\|.
    \]
Hence $\sup\{\langle s,w\rangle \mid \ s\in C^\circ,\ \|s\|\le 1\}\le \|p\| = {\rm d}(w; C)$. If $p \neq 0$, choose $s=p/\|p\|$. Then by \eqref{eq: moreau} we have 
    \[
    \langle s,w\rangle
    =\dfrac{1}{\|p\|}\langle P_{C^\circ}(w),P_C(w)\rangle+\dfrac{1}{\|p\|}\langle p,p\rangle
    = \|p\|.
    \]
If $p=0$, the equality is immediate.
\end{proof}

We will need the following standard closedness criterion for sums of closed convex cones, which is a special case of Rockafellar's closedness theorem; see \cite[Corollary~9.1.2]{rockafellar-convex}.

\begin{lemma} \label{lem: closedness of sum}
Let $K,L \subset \R^n$ be nonempty closed convex cones. If $K \cap (-L) = \{0\}$, then $K+L$ is closed.
\end{lemma}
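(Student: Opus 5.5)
The plan is to argue through the dual cone $K^\circ+L^\circ$ and a bipolar argument. A direct sequential argument is an alternative, and it is the one I would actually carry out since it is more elementary.

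Take a sequence $z_k=x_k+y_k\in K+L$ with $x_k\in K$, $y_k\in L$, and $z_k\to z$. The goal is to show $z\in K+L$.

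\textbf{Case 1: $(x_k)$ is bounded.} Then $y_k=z_k-x_k$ is bounded as well. Pass to a subsequence along which $x_k\to x$ and $y_k\to y$. Since $K$ and $L$ are closed, $x\in K$ and $y\in L$. Hence $z=x+y\in K+L$.

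\textbf{Case 2: $(x_k)$ is unbounded.} Pass to a subsequence with $\|x_k\|\to\infty$. Normalize by setting $u_k:=x_k/\|x_k\|$ and $v_k:=y_k/\|x_k\|$.
\begin{itemize}
\item Because $K$ and $L$ are cones, $u_k\in K$ and $v_k\in L$.
\item We have $u_k+v_k=z_k/\|x_k\|\to 0$, since $(z_k)$ converges and is therefore bounded.
\item By compactness of the unit sphere, a further subsequence gives $u_k\to u$ with $\|u\|=1$.
\item Then $v_k\to -u$.
\item Closedness gives $u\in K$ and $-u\in L$, so $u\in K\cap(-L)$ with $u\neq0$.
\end{itemize}
This contradicts the hypothesis $K\cap(-L)=\{0\}$. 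So Case 2 cannot occur, and Case 1 gives the conclusion.

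The only delicate point is this normalization step. One must normalize by $\|x_k\|$, not by $\|x_k\|+\|y_k\|$, and then check that $u_k+v_k\to0$. This works because $(z_k)$ is bounded. Convexity is not needed beyond $K$ and $L$ being closed cones. Alternatively, one could simply invoke \cite[Corollary~9.1.2]{rockafellar-convex}, where recession cones of cones are the cones themselves. The self-contained argument above is short enough to include.
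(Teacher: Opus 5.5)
Your proof is correct and complete. In Case~2 the rescaled vectors $u_k\in K$, $v_k\in L$ satisfy $u_k+v_k=z_k/\|x_k\|\to 0$, so any cluster point $u$ of $(u_k)$ is a unit vector in $K\cap(-L)$, which is impossible. Hence every decomposition $z_k=x_k+y_k$ has $(x_k)$ bounded, and Case~1 concludes.

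The paper gives no argument of its own here. It records the lemma as a special case of Rockafellar's closedness theorem \cite[Corollary~9.1.2]{rockafellar-convex}: the sum of two nonempty closed convex sets is closed when no recession direction of one has its opposite a recession direction of the other. For closed convex cones one has $K^\infty=K$ and $L^\infty=L$, so that hypothesis is exactly $K\cap(-L)=\{0\}$.

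You replace the recession-cone machinery with a direct normalization-and-compactness argument. This works because cones are invariant under positive scaling. It buys self-containment, and it shows that convexity plays no role: only closedness and positive homogeneity are used. The citation buys generality instead. It covers arbitrary closed convex sets, not only cones, and it also yields $(C_1+C_2)^\infty=C_1^\infty+C_2^\infty$.

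Two minor remarks. First, the dual/bipolar plan announced in your opening sentence is never used and can be dropped. Second, your claim that one \emph{must} normalize by $\|x_k\|$ rather than by $\|x_k\|+\|y_k\|$ is inaccurate. The latter works equally well: the limits then satisfy $\bar u+\bar v=0$ and $\|\bar u\|+\|\bar v\|=1$, which forces $\bar u\neq 0$. This is exactly the normalization the paper uses in the proof of Lemma~\ref{lem: normal separation at infinity}.
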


Let $I \subset \{1,\ldots,n\}$ be a nonempty index set. Define the coordinate projection $\pi_I: \R^n \to \R^{|I|}$ by $\pi_I(x) = (x_i)_{i \in I}$.

For a set-valued map $F: \R^n \rightrightarrows \R^m$, the \emph{Painlev\'e--Kuratowski upper and lower limits} of $F$ are defined by
    \begin{gather*}
    \Limsup_{x \to \Bar x} F(x) := \{y \in \R^m \mid \exists x_k \to \Bar x, \exists y_k \in F(x_k),\ y_k \to y\}, \\
    \Liminf_{x \to \Bar x} F(x) := \{y \in \R^m \mid \forall x_k \to \Bar x,\ \exists y_k \in F(x_k),\ y_k \to y\}, \\
    \Limsup_{\pi_I(x) \xrightarrow{\Omega} \infty} F(x) := \{y \in \R^m \mid \exists x_k \in \Omega \;\; \text{with } \ \|\pi_I(x_k)\| \to \infty, \ \exists y_k \in F(x_k),\ y_k \to y\}, \\
    \Liminf_{\pi_I(x) \xrightarrow{\Omega} \infty} F(x) := \{y \in \R^m \mid \forall x_k \in \Omega \;\; \text{with } \ \|\pi_I(x_k)\| \to \infty, \ \exists y_k \in F(x_k),\ y_k \to y\}.
    \end{gather*}

Throughout the paper our focus is on convex sets and convex functions. The following are the standard tangent and normal cones from convex analysis.

\begin{definition} \label{def: convex cones at finite}
Let $\Omega \subset \R^n$ be a nonempty convex set, and let $\Bar x \in \Omega$.
\begin{enumerate}[\rm (i)]
    \item The \emph{tangent cone} to $\Omega$ at $\Bar x$ is
    \[
    T(\Bar x;\Omega) := \cl\cone(\Omega - \Bar x).
    \]
    \item The \emph{normal cone} to $\Omega$ at $\Bar x$ is
    \[
    N(\Bar x;\Omega) := \big\{v \in \R^n \mid \langle v, x-\Bar x \rangle \le 0, \ \;\ \text{for all}~ x \in \Omega\big\}.
    \]
\end{enumerate}
\end{definition}

For convex sets, $T(\Bar x;\Omega)$ and $N(\Bar x;\Omega)$ are closed convex cones containing the origin. Alongside these, we record the limiting normal cone, in a form that parallels the construction used at infinity in the next section.

\begin{definition} \label{def: limiting normal at finite}
Let $\Omega \subset \R^n$ be a nonempty closed convex set, and let $\Bar x \in \Omega$. The \emph{limiting normal cone} of $\Omega$ at $\Bar x$ is
\[
    N^M(\Bar x;\Omega) := \Limsup_{x \xrightarrow{\Omega} \Bar x} N(x;\Omega).
\]
\end{definition}

These cones admit equivalent representations that motivate the constructions at infinity in Section~\ref{sec: tangent and normal cones at infinity}; see, e.g.,~\cite[Theorems~6.9 and~6.28]{rockafellar-wets} and~\cite[Proposition~1.5]{mordukhovich-vol1}.

\begin{proposition} \label{prop: equivalent normal cones}
Let $\Omega\subset\R^n$ be a nonempty closed convex set, and let $\Bar x\in\Omega$. Then:
\begin{enumerate}[\rm (i)]
    \item $\displaystyle T(\Bar x;\Omega)=\Liminf_{x\xrightarrow{\Omega}\Bar x,\ t\searrow 0}\frac{\Omega-x}{t}$;
    \item $N(\Bar x;\Omega)=T(\Bar x;\Omega)^\circ$;
    \item $N(\Bar x;\Omega)=N^M(\Bar x;\Omega)$.
\end{enumerate}
\end{proposition}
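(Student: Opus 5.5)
We prove the three assertions in turn, using only the convexity of $\Omega$ and elementary properties of closed convex cones.

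For (i), denote the right-hand side by $L$. First we show $T(\Bar x;\Omega)\subset L$. Fix $w\in T(\Bar x;\Omega)$ and arbitrary sequences $x_k\xrightarrow{\Omega}\Bar x$, $t_k\searrow 0$. By definition, $w=\lim_j \lambda_j(z_j-\Bar x)$ with $\lambda_j\ge 0$ and $z_j\in\Omega$. A diagonal argument reduces the claim to the case $w=\lambda(z-\Bar x)$ with $z\in\Omega$ and $\lambda\ge 0$. Assume $\lambda>0$ (for $\lambda=0$ take $w_k=0$). Set
\[
w_k:=\lambda(z-x_k)+\lambda(x_k-\Bar x)\quad\text{whenever } t_k\lambda\le 1 .
\]
Then $x_k+t_kw_k=(1-t_k\lambda)x_k+t_k\lambda z+t_k\lambda(x_k-\Bar x)$, and the last term is not in $\Omega$ in general. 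A cleaner choice is $w_k:=\lambda(z-x_k)$. Now $x_k+t_kw_k$ is a convex combination of $x_k$ and $z$, hence lies in $\Omega$, and $w_k\to\lambda(z-\Bar x)=w$.

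For the general $w$, pick approximants $w^{(j)}=\lambda_j(z_j-\Bar x)$ with $\|w^{(j)}-w\|<1/j$. For each $j$ the construction above gives $N_j$ such that $\|\lambda_j(z_j-x_k)-w^{(j)}\|<1/j$ and $t_k\lambda_j\le1$ for $k\ge N_j$. We may take $N_j$ increasing and set $w_k:=\lambda_j(z_j-x_k)$ for $N_j\le k<N_{j+1}$. Then $w_k\in(\Omega-x_k)/t_k$ and $w_k\to w$.

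Conversely, for $L\subset T(\Bar x;\Omega)$, take $x_k\equiv\Bar x$ and any $t_k\searrow0$. Each $w\in L$ is then a limit of points $w_k\in(\Omega-\Bar x)/t_k\subset\cone(\Omega-\Bar x)$, so $w$ belongs to the closure.

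For (ii), $v\in N(\Bar x;\Omega)$ means $\langle v,z-\Bar x\rangle\le 0$ for all $z\in\Omega$. This is equivalent to $v\in(\cone(\Omega-\Bar x))^\circ$. Since the polar of a set equals the polar of its closure (the inner product is continuous), this is in turn equivalent to $v\in T(\Bar x;\Omega)^\circ$.

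For (iii), the inclusion $N(\Bar x;\Omega)\subset N^M(\Bar x;\Omega)$ is trivial: take constant sequences $x_k\equiv\Bar x$ and $v_k\equiv v$. For the reverse inclusion, let $x_k\xrightarrow{\Omega}\Bar x$, $v_k\in N(x_k;\Omega)$ and $v_k\to v$. Fix $z\in\Omega$. Then $\langle v_k,z-x_k\rangle\le0$ for every $k$, and passing to the limit gives $\langle v,z-\Bar x\rangle\le 0$. Hence $v\in N(\Bar x;\Omega)$.

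The main obstacle is the diagonal step in (i), namely passing from generators of $\cone(\Omega-\Bar x)$ to its closure while keeping the chosen sequences $x_k$ and $t_k$ fixed. The increasing-index construction above handles it.
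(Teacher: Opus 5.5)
Your proof is correct, but it takes a different route from the paper. The paper gives no proof of its own; it cites \cite[Theorems~6.9 and~6.28]{rockafellar-wets} and \cite[Proposition~1.5]{mordukhovich-vol1}. Those references derive the three identities from general variational-analytic machinery:
\begin{enumerate}[\rm (a)]
\item the Clarke (regular) tangent cone is the inner limit of $(\Omega-x)/t$;
\item closed convex sets are Clarke regular, so this cone coincides with $\cl\cone(\Omega-\bar x)$;
\item the regular, limiting, and convex normal cones all agree.
\end{enumerate}
You instead argue directly from the definitions. In (i), the convex combination $x_k+t_k\lambda(z-x_k)=(1-t_k\lambda)x_k+t_k\lambda z\in\Omega$ handles the generators. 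In (ii), a set and its closure have the same polar. In (iii), you pass to the limit in $\langle v_k,z-x_k\rangle\le 0$ for fixed $z\in\Omega$. Your version is self-contained and shows that closedness of $\Omega$ is never used. The citation route is shorter and places the statement inside the Clarke-regularity framework that motivates the constructions at infinity.

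A few points for the write-up:
\begin{enumerate}[\rm (1)]
\item Delete the abandoned first choice of $w_k$.
\item Set $w_k:=0$ for the finitely many indices with $t_k\lambda>1$ (respectively $k<N_1$). This is admissible because $x_k\in\Omega$, and it is needed because the paper's inner limit requires $w_k\in(\Omega-x_k)/t_k$ for every $k$.
\item You can shorten the diagonal step: a Painlev\'e--Kuratowski inner limit is always closed, so it suffices to show $\cone(\Omega-\bar x)\subset L$.
\end{enumerate}
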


Let $f:\R^n\to\R\cup\{\infty\}$ be an extended-real-valued function. We denote by $\dom f$, $\gph f$, and $\epi f$ the domain, graph, and epigraph of $f$, respectively. We say that $f$ is \emph{proper} if $\dom f\neq\emptyset$. The \emph{Fenchel conjugate} of $f$ is defined by
\[
f^*(u):=\sup_{x\in\R^n}\big\{\langle u,x\rangle-f(x)\big\},
\]
and its domain $\dom f^*$ records the slopes of all affine minorants of $f$. The \emph{recession function} of $f$ is defined by
\[
f^\infty(d):=\lim_{t\to\infty}\frac{f(x_0+td)-f(x_0)}{t}\qquad(x_0\in\dom f\text{ arbitrary}).
\]
Moreover, if $f$ is a proper, lower semicontinuous, convex function, then this limit exists in $\R\cup\{\infty\}$ and is independent of $x_0$, and $f^\infty$ is sublinear and lower semicontinuous with $f^\infty=\sigma_{\dom f^*}$; see \cite[Theorems~8.5 and~13.3]{rockafellar-convex}.

We next recall two classes of functions used in subsequent sections.

\begin{definition} \label{def: polyhedral function}
A convex function $f:\R^n\to\R\cup\{\infty\}$ is called \emph{polyhedral} if its epigraph $\epi f$ is a polyhedral convex set in $\R^{n+1}$. Equivalently, $f$ is the pointwise maximum of finitely many affine functions on its effective domain, which is a polyhedral convex set.
\end{definition}

\begin{definition} \label{def: lipschitz at infinity}
A function $f:\R^n\to\R\cup\{\infty\}$ is \emph{Lipschitz at infinity} with respect to $I$ if there exist constants $L\ge 0$, $\gamma>0$, and $\rho>0$ such that, for every $x\in\dom f$ with $\|\pi_I(x)\|\ge\gamma$, the function $f$ is finite on the ball $B(x;\rho)$ and
\[
|f(y)-f(z)|\le L\|y-z\| \qquad \text{for all } y,z\in B(x;\rho).
\]When $I=\{1,\dots,n\}$, this is the Lipschitzness-at-infinity notion introduced in~\cite{tung23a}.
\end{definition}

\section{Tangent Cones and Normal Cones at Infinity: The Convex Case} \label{sec: tangent and normal cones at infinity}
In this section we study the \emph{Clarke} tangent and normal cones at infinity introduced in~\cite{tung23a,tung23b}, specialized to the convex case. By Proposition~\ref{prop: equivalent normal cones}, the Clarke superscript is redundant at finite points, and we likewise drop it at infinity. The limiting (Mordukhovich) normal cone at infinity, denoted by $N^M$, may strictly differ from $N$ and is treated separately.

In what follows we assume that $\pi_I(\Omega)$ is unbounded. We also define the set
    \[
    B_I := \{x \in \R^n \mid \|\pi_I(x)\| \le 1\}.
    \]

\begin{definition} \label{def: cones at infinity}
Let $\Omega \subset \R^n$ be a nonempty convex set.
\begin{enumerate}[\rm (i)]
\item The \emph{tangent cone to $\Omega$ at infinity} (with respect to $I$) is defined by
    \[
    T(\infty_I;\Omega) := \Liminf_{\pi_I(x) \xrightarrow{\Omega} \infty,\ t \searrow 0} \frac{\Omega - x}{t}.
    \]
\item The \emph{normal cone to $\Omega$ at infinity} (with respect to $I$) is defined by
    \[
    N(\infty_I;\Omega) := \big\{v \in \R^n \mid \langle v,w \rangle \le 0 \ \text{for all } w \in T(\infty_I;\Omega)\big\}.
    \]
\item The \emph{limiting normal cone to $\Omega$ at infinity} (with respect to $I$) is defined by
    \[
    N^M(\infty_I;\Omega) := \Limsup_{\pi_I(x) \xrightarrow{\Omega} \infty} N(x;\Omega).
    \]
\end{enumerate}
\end{definition}
If $I = \{1,\ldots,n\}$, we simply write $T(\infty;\Omega)$, $N(\infty;\Omega)$, and $N^M(\infty;\Omega)$.

We now relate these objects to the classical tangent and normal cones at finite points.

\begin{proposition} \label{prop: cones at infinity}
Let $\Omega \subset \R^n$ be a nonempty closed convex set. Then:
\begin{enumerate}[\rm (i)]
    \item $\displaystyle T(\infty_I;\Omega) = \Liminf_{\pi_I(x) \xrightarrow{\Omega} \infty} T(x;\Omega)$;
    \item $\displaystyle N^M(\infty_I;\Omega) \subset N(\infty_I;\Omega)$.
\end{enumerate}
\end{proposition}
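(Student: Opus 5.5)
For (i), we use the liminf definition directly and lean on convexity. Write $T_\infty := \Liminf_{\pi_I(x)\xrightarrow{\Omega}\infty} T(x;\Omega)$. Both inclusions are checked along an arbitrary sequence $x_k\in\Omega$ with $\|\pi_I(x_k)\|\to\infty$.

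For ``$\subset$'', fix $w\in T(\infty_I;\Omega)$ and such a sequence $x_k$. We must produce $w_k\in T(x_k;\Omega)$ with $w_k\to w$. Pair $x_k$ with $t_k:=1/k\searrow 0$. The definition of $T(\infty_I;\Omega)$ then gives $w_k\in(\Omega-x_k)/t_k$ with $w_k\to w$. By Definition~\ref{def: convex cones at finite}, $(\Omega-x_k)/t_k\subset\cone(\Omega-x_k)\subset T(x_k;\Omega)$, which is what we need.

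For ``$\supset$'', fix $w\in T_\infty$ and sequences $x_k\in\Omega$, $\|\pi_I(x_k)\|\to\infty$, $t_k\searrow 0$. We need $w_k\in(\Omega-x_k)/t_k$ with $w_k\to w$. First we obtain $v_k\in T(x_k;\Omega)$ with $v_k\to w$. Since $T(x_k;\Omega)=\cl\cone(\Omega-x_k)$, we can pick $u_k=\lambda_k(y_k-x_k)$ with $y_k\in\Omega$, $\lambda_k\ge0$, and $\|u_k-v_k\|<1/k$.

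The key convexity fact is that the sets $(\Omega-x)/t$ increase as $t\searrow0$, because $\Omega-x$ is convex and contains $0$. So $u_k\in(\Omega-x_k)/t$ for every $t\le 1/\lambda_k$ (when $\lambda_k>0$; the case $u_k=0$ is trivial). The difficulty is that $t_k$ may exceed $1/\lambda_k$. This is the main obstacle, and we handle it with a diagonal argument.

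Since $w\in T_\infty$ holds along every sequence, we apply the liminf along the given sequence $x_k$ but choose the approximation tolerance adaptively. For each $k$, let $\varepsilon_k:=\inf\{\|z-w\| \mid z\in(\Omega-x_k)/t_k\}$. Suppose for contradiction that $\varepsilon_k\not\to0$. Passing to a subsequence, $\varepsilon_k\ge\varepsilon>0$. Along this subsequence we still have $v_k\in T(x_k;\Omega)$ with $v_k\to w$. For each fixed $k$, the union $\bigcup_{t>0}(\Omega-x_k)/t=\cone(\Omega-x_k)$ is dense in $T(x_k;\Omega)$ and increasing in $1/t$. Hence $d(w,(\Omega-x_k)/t)\to d(w,T(x_k;\Omega))<\varepsilon/2$ as $t\searrow0$, for $k$ large.

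We cannot choose $t$ freely, however, since $t_k$ is given. So we need a stronger liminf in which the rate is uniform. My plan is to reduce to the statement $d(w,(\Omega-x_k)/t_k)\to0$ by reparametrizing the sequence. Choose a subsequence $x_{k_j}$ and define a new sequence $x'_m$ that repeats $x_{k_j}$ many times, so that the required tolerance is met before moving on. If that does not close the gap, I would fall back on the convexity estimate $d(w,(\Omega-x)/t)\le d(w,(\Omega-x)/s)$ for $t\le s$. Alternatively, I would invoke Proposition~\ref{prop: equivalent normal cones}(i) pointwise together with the monotonicity in $t$.

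For (ii), take $v\in N^M(\infty_I;\Omega)$. Then $v=\lim v_k$ with $v_k\in N(x_k;\Omega)$, $x_k\in\Omega$, and $\|\pi_I(x_k)\|\to\infty$. Fix $w\in T(\infty_I;\Omega)$. By part (i) there are $w_k\in T(x_k;\Omega)$ with $w_k\to w$. Proposition~\ref{prop: equivalent normal cones}(ii) gives $\langle v_k,w_k\rangle\le0$, and passing to the limit yields $\langle v,w\rangle\le0$. Therefore $v\in N(\infty_I;\Omega)$. Note that only the inclusion ``$\subset$'' of (i), the easy direction, is actually needed here.
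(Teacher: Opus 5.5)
Your proof of the inclusion $T(\infty_I;\Omega)\subset\Liminf_{\pi_I(x)\xrightarrow{\Omega}\infty}T(x;\Omega)$ is correct and matches the paper. So is your proof of (ii), and you are right that (ii) needs only this easy half.

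The gap is the reverse inclusion in (i). You locate the obstacle correctly, but then only list possible remedies: repeating points of the sequence, monotonicity of $t\mapsto(\Omega-x)/t$, and the finite-point formula of Proposition~\ref{prop: equivalent normal cones}(i) at each $x_k$. None of these can work. Each uses the hypothesis only through $v_k\in T(x_k;\Omega)$ at the \emph{given} points $x_k$, and that information carries no uniform rate. Concretely, let $\Omega=\{x\in\R^3\mid x_2\ge|x_1|\}$, $I=\{3\}$, $t_k=1/k$, $x_k=(-t_k^2,t_k^2,k)$ and $w=(1,0,0)$. Then $T(x_k;\Omega)=\{d\mid d_1+d_2\ge 0\}$ contains $w$ for every $k$. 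Yet $x_k+t_kd\in\Omega$ means $t_k+d_2\ge|d_1-t_k|$, which fails for every $d$ near $w$ once $k$ is large. Of course $w$ is not in the full inner limit here: at the nearby points $(t_k^2,t_k^2,k)$ the tangent cone is $\{d\mid d_2\ge d_1\}$, at distance $1/\sqrt2$ from $w$. But an argument that looks only at the cones at the $x_k$ cannot detect this.

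The missing idea is to apply the inner-limit hypothesis at \emph{other} points that escape to infinity, namely at projections. Following the paper, set $z_k:=x_k+t_kw$, $p_k:=P_\Omega(z_k)$ and $r_k:=(z_k-p_k)/t_k$.
\begin{enumerate}[\rm (a)]
\item Since $x_k\in\Omega$, we get $\|r_k\|\le\|w\|$ and $\|p_k-x_k\|\le 2t_k\|w\|$. Hence $\|\pi_I(p_k)\|\to\infty$, and the hypothesis gives $u_k\in T(p_k;\Omega)$ with $u_k\to w$.
\item The projection inequality gives $r_k\in N(p_k;\Omega)$, hence $\langle r_k,u_k\rangle\le 0$.
\item Testing the same inequality against $x_k\in\Omega$, and using $x_k-p_k=t_k(r_k-w)$, gives $\langle r_k,r_k-w\rangle\le 0$.
\item Any cluster point $r$ of $\{r_k\}$ therefore satisfies $\|r\|^2\le\langle r,w\rangle\le 0$, so $r_k\to 0$.
\end{enumerate}
Then $w_k:=w-r_k\to w$ and $x_k+t_kw_k=p_k\in\Omega$, as required.
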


\begin{proof}
(i) Let $w \in T(\infty_I;\Omega) = \Liminf_{\pi_I (x) \xrightarrow{\Omega} \infty, ~ t \searrow 0} \dfrac{\Omega-x}{t}$. Take any sequences $\{x_k\}\subset\Omega$ and $\{t_k\}\subset\R_+$ with $\|\pi_I(x_k)\|\to\infty$ and $t_k\searrow 0$. Then there exist $w_k \to w$ such that $x_k + t_kw_k \in \Omega$ for all $k\in\N$. Hence
    \[
    w_k \in \frac{\Omega-x_k}{t_k} \subset \cone(\Omega-x_k) \subset T(x_k;\Omega),
    \]
where the last inclusion holds by the convexity of $\Omega$. By definition,
$w \in \Liminf_{\pi_I(x) \xrightarrow{\Omega} \infty} T(x;\Omega)$.

To prove the converse inclusion, take
$w\in \Liminf_{\pi_I(x) \xrightarrow{\Omega} \infty} T(x;\Omega)$.
Fix arbitrary sequences $\{x_k\}\subset\Omega$ and $\{t_k\}\subset\R_+$ such that $\|\pi_I(x_k)\|\to\infty$ and $t_k\searrow 0$. 

We will construct $w_k\to w$ with
$x_k+t_kw_k\in\Omega$ for all $k \in \N$. Set $z_k:=x_k+t_kw$, $p_k:=P_\Omega(z_k)$, and
$r_k:=(z_k-p_k)/t_k$. We first prove that $r_k \to 0$. Since
$x_k\in\Omega$, we have $d(z_k;\Omega)\le \|z_k-x_k\|=t_k\|w\|$; hence
$\|r_k\|\le\|w\|$, so $\{r_k\}$ is bounded. It is enough to show that every
cluster point of $\{r_k\}$ is zero. Let $r$ be an arbitrary cluster point and, passing to a subsequence without relabeling, suppose that $r_k\to r$.

Since $p_k=z_k-t_kr_k=x_k+t_k(w-r_k)$, we have $p_k-x_k\to0$. Hence
$\|\pi_I(p_k)\|\to\infty$. By the definition of the inner limit, there are
$u_k\in T(p_k;\Omega)$ such that $u_k\to w$. On the other hand, the projection
characterization gives us $z_k-p_k\in N(p_k;\Omega)$, and therefore
$r_k\in N(p_k;\Omega)$. Thus $\langle r_k,u_k\rangle\le0$, and passing to the
limit gives us $\langle r,w\rangle\le0$.

Now apply the projection characterization once more with $x_k\in\Omega$.
Then $\langle z_k-p_k,x_k-p_k\rangle\le0$. Since $z_k-p_k=t_kr_k$ and
$x_k-p_k=t_k(r_k-w)$, we get $\langle r_k,r_k-w\rangle\le0$. Passing to the
limit yields $\|r\|^2\le \langle r,w\rangle\le0$, so $r=0$. Thus every cluster
point of $\{r_k\}$ is zero, and since $\{r_k\}$ is bounded, $r_k\to0$.

Finally, define $w_k:=(p_k-x_k)/t_k$. Then
$w_k=w-r_k\to w$ and $x_k+t_kw_k=p_k\in\Omega$. Since the sequences $\{x_k\}$ and $\{t_k\}$ were arbitrary, this proves $w\in T(\infty_I;\Omega)$.

(ii) Take any $v \in N^M(\infty_I, \Omega) =\Limsup_{\pi_I(x) \xrightarrow{\Omega} \infty} N(x;\Omega)$. By definition, there exist sequences $\{x_k\}\subset\Omega$ and $\{v_k\}\subset\R^n$ such that $\|\pi_I(x_k)\|\to\infty$, $v_k\to v$, and $v_k\in N(x_k;\Omega)$ for all $k\in\N$. Take any $w \in T(\infty_I, \Omega)$. By (i) we can also choose $w_k \to w$ with $w_k \in T(x_k;\Omega)$ \; for all $k \in \N$. Since $v_k \in N(x_k;\Omega)$, we have $\langle v_k, w_k \rangle \le 0$. Passing to the limit gives us $\langle v,w \rangle \le 0$, hence $v \in N(\infty_I;\Omega)$.
\end{proof}

The example below shows that the equality $N^M(\infty_I;\Omega)=N(\infty_I;\Omega)$ may fail for a convex set $\Omega$.

\begin{example} \label{ex: nonconvex normal cone}
Let \(\Omega=\{(x_1,x_2)\in\R^2\mid x_2\ge |x_1|\}\), and let \(I=\{1,2\}\). By a direct computation, we have
\begin{align*}
    &T(\infty_I;\Omega)
    =\{(w_1,w_2)\in\R^2\mid w_2\ge |w_1|\},\\
    &N(\infty_I;\Omega)
    =\{(v_1,v_2)\in\R^2\mid v_2\le -|v_1|\},\\
    &N^M(\infty_I;\Omega) 
    = \{(t,-t) \in \R^2 \mid t \ge 0\} \cup \{(-t,-t) \in \R^2 \mid t \ge 0\}.
\end{align*}
\end{example}

\begin{remark}\label{rem: limiting normal nonconvex}
For a polyhedral convex cone $\Omega$, $N(\infty_I;\Omega)$ is a closed convex cone. However, $N^M(\infty_I;\Omega)$ may fail to be convex as shown in Example~\ref{ex: nonconvex normal cone}.
\end{remark}

The next example shows that the reverse inclusion in Proposition~\ref{prop: cones at infinity}(i) may fail in the nonconvex case.

\begin{example}\label{ex: nonconvex tangent reverse}
Consider the nonconvex set
    \[
    \Omega:=\bigcup_{n=1}^\infty \Bigl((n,n+\tfrac1n)\times\{0\}\Bigr)\subset \R^2.
    \]
Consider the case where $I = \{1\}$. For every $x\in\Omega$, we have $T(x;\Omega)=\R\times\{0\}$. In particular,
\[
(1,0)\in \Liminf_{\pi_I(x) \xrightarrow{\Omega} \infty} T(x;\Omega).
\]

We will prove that $(1,0) \notin T(\infty_I;\Omega)$. Define
$x_k:=\bigl(k+\frac1k-\frac1{k^2},0\bigr)\in\Omega$ and $t_k:=\frac1k \searrow 0$.
Suppose to the contrary that $(1,0)\in T(\infty_I;\Omega)$. Then there exists $v_k\to(1,0)$ such that
$x_k+t_kv_k\in\Omega$ for all $k\in\N$.
By the definition of $\Omega$ we have
$\frac1k (v_k)_1<\frac1{k^2}$. Equivalently, $(v_k)_1<\frac1k$ and $(v_k)_2=0$,
which contradicts the fact that $v_k\to(1,0)$. Therefore,
    \[    \Liminf_{\pi_I(x)\xrightarrow{\Omega}\infty} T(x;\Omega)\not\subset T(\infty_I;\Omega).
    \]
\end{example}

\subsection{Basic Properties}
We next record a few basic facts about $T(\infty_I;\Omega)$ and $N(\infty_I;\Omega)$ that will be used throughout the paper.

\begin{proposition} \label{prop: basic properties}
Let $\Omega \subset \R^n$ be a nonempty convex set.
\begin{enumerate}[\rm (i)]
    \item $T(\infty_I;\Omega)$ and $N(\infty_I;\Omega)$ are closed convex cones;
    \item
    $N(\infty_I;\Omega) = [T(\infty_I;\Omega)]^{\circ}$ and $T(\infty_I;\Omega) = [N(\infty_I;\Omega)]^{\circ}$;
    \item The multifunction $N(\cdot;\Omega)$ is closed at infinity in the sense that
    \[
    x_k \in \Omega,\ \|\pi_I(x_k)\| \to \infty,\ v_k \in N(x_k;\Omega),\ v_k \to v \ \Longrightarrow\ v \in N(\infty_I;\Omega);
    \]
    \item The multifunction $T(\cdot;\Omega)$ is lower semicontinuous at infinity in the sense that for every open set $V \subset  \R^n$ with $T(\infty_I;\Omega) \cap V \neq \emptyset$, there exists $\gamma > 0$ such that
    \[
    T(x;\Omega) \cap V \neq \emptyset \quad \text{for all} ~ x \in \Omega \setminus (\gamma B_I).
    \]
\end{enumerate}
\end{proposition}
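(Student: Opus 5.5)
I will prove the four items in order. The key point is that convexity of $T(\infty_I;\Omega)$ is where convexity of $\Omega$ enters. Everything else follows from the inner-limit structure and the bipolar theorem.

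For (i), closedness of $T(\infty_I;\Omega)$ is automatic, since any Painlev\'e--Kuratowski inner limit is closed. A diagonal argument shows this directly: if $w^j\to w$ with $w^j\in T(\infty_I;\Omega)$, then for fixed sequences $x_k$ and $t_k$ one picks approximants $w^j_k$ and lets $j=j(k)$ grow slowly. For the cone property, take $w\in T(\infty_I;\Omega)$ and $\lambda>0$. Given $x_k$ and $t_k\searrow0$, apply the definition to $x_k$ and $\lambda t_k\searrow 0$ to get $w_k\to w$ with $x_k+\lambda t_k w_k\in\Omega$. Then $\lambda w_k\to\lambda w$ works. The vector $0$ belongs trivially, via $w_k=0$ and $x_k\in\Omega$.

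For convexity, I will show closure under addition. Take $w,u\in T(\infty_I;\Omega)$ and fix $x_k$ and $t_k$. By convexity of $\Omega$, it suffices to find $w_k\to w$ and $u_k\to u$ with $x_k+2t_kw_k\in\Omega$ and $x_k+2t_ku_k\in\Omega$; both exist by the cone property just proved. The midpoint then gives $x_k+t_k(w_k+u_k)\in\Omega$, so $w+u\in T(\infty_I;\Omega)$. Alternatively, one could use Proposition~\ref{prop: cones at infinity}(i) together with convexity of each $T(x;\Omega)$, but that result assumes $\Omega$ is closed, so the direct midpoint argument is preferable.

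The normal cone $N(\infty_I;\Omega)$ is by definition the polar of a set, hence a closed convex cone.

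For (ii), the first identity is the definition of $N(\infty_I;\Omega)$. The second follows from Lemma~\ref{lem: polarity rules}(i) applied to the closed convex cone $T(\infty_I;\Omega)$ from (i): $[N(\infty_I;\Omega)]^\circ=T(\infty_I;\Omega)^{\circ\circ}=T(\infty_I;\Omega)$.

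For (iii), I will argue as in Proposition~\ref{prop: cones at infinity}(ii), but without assuming $\Omega$ is closed. Take $w\in T(\infty_I;\Omega)$ and apply the definition with the given $x_k$ and $t_k=1/k$. This yields $w_k\to w$ with $x_k+t_kw_k\in\Omega$. Since $v_k\in N(x_k;\Omega)$, we get $\langle v_k,t_kw_k\rangle\le0$, hence $\langle v_k,w_k\rangle\le0$. In the limit $\langle v,w\rangle\le0$, so $v\in N(\infty_I;\Omega)$ by (ii).

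For (iv), I will argue by contradiction. Let $w\in T(\infty_I;\Omega)\cap V$ and suppose there are $x_k\in\Omega$ with $\|\pi_I(x_k)\|\to\infty$ (taking $\gamma=k$) and $T(x_k;\Omega)\cap V=\emptyset$. Applying the definition with $t_k=1/k$ gives $w_k\to w$ with $x_k+t_kw_k\in\Omega$. Then $w_k\in\cone(\Omega-x_k)\subset T(x_k;\Omega)$, and $w_k\in V$ for large $k$ because $V$ is open. This is a contradiction.

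The main subtlety I expect is the closedness of the inner limit in (i), and being careful that no step uses closedness of $\Omega$. That is why I avoid Proposition~\ref{prop: cones at infinity} throughout.
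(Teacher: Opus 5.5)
Your proof is correct. Parts (i)--(iii) follow the paper in substance. The paper proves convexity in one step: it applies the definition along $\mu t_k$ with $\mu=\lambda_1+\lambda_2$ and takes the convex combination $\frac{\lambda_1}{\mu}(x_k+t_k\mu w_{1k})+\frac{\lambda_2}{\mu}(x_k+t_k\mu w_{2k})$. That is your scaling-plus-midpoint argument merged into one. The paper calls closedness ``immediate from the definition,'' and your diagonal argument spells this out. Your (ii) and (iii) are identical to the paper's.

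The genuine difference is in (iv). The paper argues by contradiction using separation:
\begin{enumerate}[\rm (a)]
    \item it separates the closed convex cone $T(x_k;\Omega)$ from a ball $B(w;\varepsilon)\subset V$;
    \item this yields unit vectors $v_k\in N(x_k;\Omega)$ with $\langle v_k,w\rangle\ge\varepsilon$;
    \item it passes to a limit $v$ and invokes (iii) to get $v\in N(\infty_I;\Omega)$, which contradicts $\langle v,w\rangle\le 0$.
\end{enumerate}

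You instead note that the approximants $w_k\to w$ supplied by the definition of $T(\infty_I;\Omega)$ already satisfy $w_k\in\frac{1}{t_k}(\Omega-x_k)\subset\cone(\Omega-x_k)\subset T(x_k;\Omega)$. Hence they lie in $V$ for large $k$. This is the same reasoning as the first half of the proof of Proposition~\ref{prop: cones at infinity}(i), which, as you say, uses no closedness of $\Omega$. It shows that (iv) amounts to the inclusion $T(\infty_I;\Omega)\subset\Liminf_{\pi_I(x)\xrightarrow{\Omega}\infty}T(x;\Omega)$ for an arbitrary convex $\Omega$.

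Your route is shorter and avoids both the separation theorem and any dependence on (iii). The paper's route instead obtains the primal property (iv) as a dual consequence of the closedness of the normal multifunction in (iii), at the cost of that extra machinery.
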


\begin{proof}
(i) We begin with $T(\infty_I;\Omega)$. Let $w_1,w_2 \in T(\infty_I;\Omega)$ and $\lambda_1,\lambda_2 \in \R_+$. Set $\mu := \lambda_1 + \lambda_2$. The case $\mu=0$ is trivial, so assume $\mu>0$. By definition, for any sequences $\{x_k\}\subset\Omega$ and $\{t_k\}\subset\R_+$ with $\|\pi_I(x_k)\|\to\infty$ and $t_k\searrow 0$, there exist $w_{1k} \to w_1$ and $w_{2k} \to w_2$ such that
    \[
    x_k + t_k\mu w_{1k} \in \Omega, \quad x_k + t_k\mu w_{2k} \in \Omega, \quad \;\ \text{for all}~ k\in\N.
    \]
Using the convexity of $\Omega$, we obtain
    \[
    x_k + t_k(\lambda_1w_{1k}+\lambda_2w_{2k}) = \frac{\lambda_1}{\mu}(x_k+t_k\mu w_{1k})+\frac{\lambda_2}{\mu}(x_k+t_k\mu w_{2k}) \in \Omega.
    \]
By definition, $\lambda_1w_1+\lambda_2w_2 \in T(\infty_I;\Omega)$, and hence $T(\infty_I;\Omega)$ is a convex cone. Its closedness is immediate from the definition.

    Because $N(\infty_I;\Omega) = [T(\infty_I;\Omega)]^{\circ}$ is the polar of a set, it is automatically a closed convex cone.

    (ii) The first conclusion is obvious. For the second one, since $T(\infty_I;\Omega)$ is a closed convex cone by (i),
    \[
    [T(\infty_I;\Omega)]^{\circ\circ} = T(\infty_I;\Omega),
    \]
and therefore $[N(\infty_I;\Omega)]^{\circ} = T(\infty_I;\Omega)$.

    (iii) Take any sequence $\{x_k\}\subset\Omega$ with $\|\pi_I(x_k)\|\to\infty$, and take any sequence $\{v_k\}\subset\R^n$ such that $v_k\to v$, where $v_k\in N(x_k;\Omega)$ for every $k\in\N$. Fix any $w \in T(\infty_I;\Omega)$, and choose an arbitrary sequence $\{t_k\} \subset \R_+$ with $t_k \searrow 0$. By the definition of $T(\infty_I;\Omega)$, there exist a sequence $\{w_k\} \subset \R^n$ such that $w_k \to w$ and $x_k + t_kw_k \in \Omega$ for all $k$. Since $v_k\in N(x_k;\Omega)$ and $x_k+t_kw_k\in\Omega$ for all $k \in \N$, we have
    \[
    \langle v_k, (x_k+t_kw_k) - x_k \rangle \le 0,
    \]
    which implies that $\langle v_k, w_k\rangle \le 0$ for all $k \in \N$. Passing to the limit gives us $\langle v, w\rangle \le 0$. Since $w \in T(\infty_I;\Omega)$ is arbitrary, we see that $v \in N(\infty_I;\Omega)$.

    (iv) Fix an open set $V\subset\R^n$ with $T(\infty_I;\Omega)\cap V\neq\emptyset$ and choose $w\in T(\infty_I;\Omega)\cap V$. Since $V$ is open, there exists $\varepsilon>0$ such that $B(w;\varepsilon)\subset V$. Assume for contradiction that for every $\gamma>0$ there exists $x\in\Omega\setminus\gamma B_I$ with $T(x;\Omega)\cap B(w;\varepsilon)=\emptyset$. Then we can find a sequence $\{x_k\}\subset\Omega$ with $\|\pi_I(x_k)\|\to\infty$ such that
    \[
    T(x_k;\Omega)\cap B(w;\varepsilon)=\emptyset \quad \;\ \text{for all}~ k\in\N.
    \]
By the separation theorem, for each $k$ there exists a nonzero $v_k\in\R^n$ satisfying
\begin{equation} \label{eq: separation theorem}
    \sup_{s \in T(x_k;\Omega)} \langle v_k, s \rangle \le \inf_{e \in B(w;\varepsilon)} \langle v_k, e \rangle.
\end{equation}
Normalizing, we may assume $\|v_k\|=1$. Passing to a subsequence if necessary, let $v_k\to v$ for some $v\in\R^n$. 

We claim that \(v_k\in N(x_k;\Omega)\). Indeed, since \(T(x_k;\Omega)\) is a cone, we can easily see that $\langle v_k,s\rangle\le 0$ for all $s\in T(x_k;\Omega)$, which means that $v_k\in [T(x_k;\Omega)]^\circ=N(x_k;\Omega)$. By (iii), we conclude that $v\in N(\infty_I;\Omega)$, and thus $\langle v,w\rangle \le 0$. On the other hand, since $0\in T(x_k;\Omega)$, combining 
\[
\inf_{e\in B(w;\varepsilon)}\langle v_k,e\rangle =\langle v_k,w\rangle-\varepsilon\|v_k\| = \langle v_k, w \rangle - \varepsilon.
\]
with \eqref{eq: separation theorem} gives us $0 \le \langle v_k, w \rangle - \varepsilon$. Letting $k\to\infty$ yields $\langle v,w\rangle \ge \varepsilon$, which is a contradiction.
\end{proof}

\begin{remark}
    A nonconvex version of Proposition~\ref{prop: basic properties} was established in \cite[Corollary~3.7]{tung23a}. Under the convexity of $\Omega$, Proposition~\ref{prop: basic properties} does not require $\Omega$ to be closed or $T(\infty_I;\Omega)$ to be solid as in \cite[Corollary~3.7]{tung23a}.
\end{remark}

We next study the relationship between the recession cone $\Omega^\infty$ and the tangent and normal cones to $\Omega$ at infinity for the convex case.

\begin{definition} (see \cite{nambook}) \label{def: recession}
Let $\Omega\subset\R^n$ be a nonempty set. The \emph{recession cone} of $\Omega$ is defined by
    \[
    \Omega^\infty := \{d\in \R^n \mid x+td \in \Omega, \ \;\ \text{for all}~ x \in \Omega,\ \;\ \text{for all}~ t \ge 0\}.
    \]
When $\Omega$ is closed and convex, we have the presentation
    \[
    \Omega^\infty = \Limsup_{t \searrow 0} t\,\Omega.
    \]
\end{definition}

\begin{proposition} \label{prop: recession inclusion}
Let $\Omega\subset\R^n$ be a nonempty closed convex set. Then
    \[
    \Omega^\infty \subset T(\infty_I;\Omega) \qquad \text{and} \qquad N(\infty_I;\Omega) \subset (\Omega^\infty)^\circ.
    \]
\end{proposition}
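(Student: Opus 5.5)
To prove the first inclusion, I will work straight from the definition of $T(\infty_I;\Omega)$ as an inner limit and use constant approximating sequences. Fix $d\in\Omega^\infty$, and take arbitrary sequences $\{x_k\}\subset\Omega$ with $\|\pi_I(x_k)\|\to\infty$ and $\{t_k\}\subset\R_+$ with $t_k\searrow 0$. I set $w_k:=d$ for every $k$. Definition~\ref{def: recession} gives $x+td\in\Omega$ for all $x\in\Omega$ and all $t\ge 0$, so $x_k+t_kw_k=x_k+t_kd\in\Omega$ for every $k$. Thus $w_k\in(\Omega-x_k)/t_k$ and $w_k\to d$. The sequences were arbitrary, so $d$ belongs to the Painlev\'e--Kuratowski inner limit, i.e., $d\in T(\infty_I;\Omega)$. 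This step needs neither the closedness of $\Omega$ nor the Limsup representation of $\Omega^\infty$; the standing assumption that $\pi_I(\Omega)$ is unbounded only ensures that such sequences exist, so the definition is not vacuous.

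For the second inclusion I will pass to polars. Both $\Omega^\infty$ and $T(\infty_I;\Omega)$ are convex cones: the latter is a closed convex cone by Proposition~\ref{prop: basic properties}(i), and $\Omega^\infty$ is a convex cone because $\Omega$ is convex. Lemma~\ref{lem: polarity rules}(ii) applied to $\Omega^\infty\subset T(\infty_I;\Omega)$ then gives $[T(\infty_I;\Omega)]^\circ\subset(\Omega^\infty)^\circ$, and Proposition~\ref{prop: basic properties}(ii) identifies the left-hand side with $N(\infty_I;\Omega)$. If one wants to avoid checking the hypotheses of the polarity lemma, the direct argument is just as short: for $v\in N(\infty_I;\Omega)$ and $d\in\Omega^\infty\subset T(\infty_I;\Omega)$, the definition of $N(\infty_I;\Omega)$ gives $\langle v,d\rangle\le 0$ at once, hence $v\in(\Omega^\infty)^\circ$.

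I see no real obstacle here. The only point needing care is to use the inner-limit quantifiers correctly: the approximating sequence must be produced for \emph{every} pair of sequences $x_k\in\Omega$, $t_k\searrow0$ with $\|\pi_I(x_k)\|\to\infty$, and the constant choice $w_k=d$ does this uniformly.
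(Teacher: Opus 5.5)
Your proposal is correct and follows essentially the same route as the paper: the paper also uses the constant sequence $w_k=d$ in the inner-limit definition for the first inclusion, and obtains the second by taking polars together with Proposition~\ref{prop: basic properties}(ii). Your direct verification of the second inclusion from the definition of $N(\infty_I;\Omega)$ is an equally valid one-line alternative.
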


\begin{proof}
Fix an arbitrary element $d \in \Omega^\infty$. Take any sequences $\{x_k\}\subset\Omega$ and $\{t_k\}\subset\R_+$ with $\|\pi_I(x_k)\|\to\infty$ and $t_k\searrow 0$. By Definition~\ref{def: recession}, we have $x_k+t_kd \in \Omega$ for all $k \in \N$. It follows from the definition of $T(\infty_I;\Omega)$ that $d \in T(\infty_I;\Omega)$. The second inclusion follows from the first one and Proposition~\ref{prop: basic properties}(ii) by taking polarity.
\end{proof}

\begin{remark}\label{rem: recession strict}
In general, the inclusion $\Omega^\infty \subset T(\infty_I;\Omega)$ could be strict even for a polyhedral convex set as shown in the example below. Let
    \[
    \Omega=\{(x_1,x_2)\in\R^2\mid x_1\ge 0,\ 0\le x_2\le 1\},
    \qquad I=\{1\}.
    \]
Then
    \[
    \Omega^\infty=\R_+\times\{0\}
    \subsetneq
    \R\times\{0\} = T(\infty_I;\Omega).
    \]
\end{remark}

\subsection{The Polyhedral Convex Case} \label{subsec: polyhedral}
In this subsection, we study the tangent and normal cones at infinity to polyhedral convex sets. Recall that a set $\Omega$ in $\R^n$ is said to be  polyhedral convex if it has the presentation
\begin{equation} \label{eq: polyhedral representation}
    \Omega = \bigcap_{j=1}^m \{x\in\R^n \mid \langle a_j, x\rangle\le b_j\}
\end{equation}
where $a_1,\dots,a_m\in\R^n\setminus\{0\}$ and $b_1,\dots,b_m\in\R$. Given $x\in\Omega$, the active index set at $x$ is defined by
\[
    \mathcal{A}(x) := \{j=1,\ldots,m \mid \langle a_j,x\rangle = b_j\}.
\]
For each $j=1,\ldots,m$, we denote
\[
    F_j := \{x\in\Omega\mid \langle a_j,x\rangle = b_j\}.
\]

\begin{definition} \label{def: asymptotic active set}
Define the \emph{asymptotic active index set} of $\Omega$ relative to $I$ by
\[
    J^\infty_I(\Omega) := \{j = 1,\dots,m \mid \pi_I(F_j)\text{ is unbounded}\}.
\]
When $I=\{1,\dots,n\}$, the set $J^\infty_I(\Omega)$ is denoted by $J^\infty(\Omega)$. In this case, it is obvious $\pi_I(F_j)$ is unbounded if and only if $F_j$ is unbounded.
\end{definition}

We continue with a useful fact.

\begin{lemma} \label{lem: asymptotic active}
Consider a nonempty polyhedral convex set $\Omega$ given by \eqref{eq: polyhedral representation}, and let $\{x_k\}\subset\Omega$ satisfy $\|\pi_I(x_k)\|\to\infty$. Then there exists $K\in\N$ such that $\mathcal{A}(x_k)\subset J^\infty_I(\Omega)$ for all $k\ge K$.
\end{lemma}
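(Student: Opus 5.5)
The argument is by contradiction and relies only on the finiteness of the index set $\{1,\dots,m\}$. Suppose the conclusion fails. Then for infinitely many $k$ the active set $\mathcal{A}(x_k)$ contains an index outside $J^\infty_I(\Omega)$. Since there are only $m$ indices, the pigeonhole principle gives a single index $j\notin J^\infty_I(\Omega)$ and a subsequence $\{x_{k_l}\}$ with $j\in\mathcal{A}(x_{k_l})$ for every $l$.

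Membership $j\in\mathcal{A}(x_{k_l})$ means $\langle a_j,x_{k_l}\rangle=b_j$, and $x_{k_l}\in\Omega$ by hypothesis, so $x_{k_l}\in F_j$ for all $l$. Hence $\pi_I(x_{k_l})\in\pi_I(F_j)$. Since $\|\pi_I(x_{k_l})\|\to\infty$ along the subsequence, $\pi_I(F_j)$ is unbounded. By Definition~\ref{def: asymptotic active set} this means $j\in J^\infty_I(\Omega)$, contradicting the choice of $j$.

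Therefore only finitely many $k$ can have $\mathcal{A}(x_k)\not\subset J^\infty_I(\Omega)$. Taking $K$ larger than all such $k$ gives $\mathcal{A}(x_k)\subset J^\infty_I(\Omega)$ for all $k\ge K$.

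An equivalent direct route avoids the contradiction. For each $j\notin J^\infty_I(\Omega)$ the set $\pi_I(F_j)$ is bounded, say by $R_j$. Set $R:=\max_{j\notin J^\infty_I(\Omega)}R_j$, with $R:=0$ if there are no such $j$. Choose $K$ so that $\|\pi_I(x_k)\|>R$ for $k\ge K$. Then $x_k\notin F_j$ for every $j\notin J^\infty_I(\Omega)$, that is, no such $j$ is active at $x_k$.

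There is no real obstacle here. The only point needing care is the degenerate case where $J^\infty_I(\Omega)$ is all of $\{1,\dots,m\}$, and there the conclusion is trivial.
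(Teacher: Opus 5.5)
Your proposal is correct, and your second, direct route is exactly the paper's proof: bound each bounded $\pi_I(F_j)$ with $j\notin J^\infty_I(\Omega)$, take the maximum $R$ (the paper's $M$, set to $0$ when no such $j$ exists), and choose $K$ with $\|\pi_I(x_k)\|>R$. Your pigeonhole-by-contradiction argument is an equivalent repackaging of the same finiteness-of-constraints idea and is also valid.
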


\begin{proof}
By Definition~\ref{def: asymptotic active set},  $\pi_I(F_j)$ is bounded for each $j \notin J^\infty_I(\Omega)$. Thus, there exists $M_j>0$ such that $\|\pi_I(y)\|\le M_j$ for every $y\in F_j$. Set $M:=\max\{M_j\mid j\notin J^\infty_I(\Omega)\}$ (with $M:=0$ if $J^\infty_I(\Omega)=\{1,\dots,m\}$). Since $\|\pi_I(x_k)\|\to\infty$, there exists $K\in\N$ such that $\|\pi_I(x_k)\|>M$ for all $k\ge K$. Fix $k\ge K$ and take any $j \in\mathcal{A}(x_k)$. Then by definition $x_k\in F_j$. Suppose on the contrary that $j\notin J^\infty_I(\Omega)$. Since $x_k\in F_j$, we see that $\|\pi_I(x_k)\|\le M_j\le M$, which is a contradiction because $\|\pi_I(x_k)\|>M$. Therefore $j\in J^\infty_I(\Omega)$, and hence $\mathcal{A}(x_k)\subset J^\infty_I(\Omega)$.
\end{proof}

Using Lemma~\ref{lem: asymptotic active}, we obtain the following presentations of the tangent and normal cones at infinity for polyhedral convex sets.

\begin{theorem} \label{prop: polyhedral cones at infinity}
Consider a nonempty polyhedral convex set $\Omega$ given by \eqref{eq: polyhedral representation}. Then
\begin{align*}
    T(\infty_I;\Omega) &= \{d\in\R^n\mid \langle a_j,d\rangle\le 0 \text{ for all } j\in J^\infty_I(\Omega)\},\\
    N(\infty_I;\Omega) &= \sum_{j\in J^\infty_I(\Omega)} \R_+ a_j.
\end{align*}
\end{theorem}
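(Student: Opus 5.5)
Write $J:=J^\infty_I(\Omega)$ and $D:=\{d\in\R^n\mid \langle a_j,d\rangle\le 0 \text{ for all } j\in J\}$. The plan is to prove $T(\infty_I;\Omega)=D$ by two inclusions. The normal cone formula then follows from Proposition~\ref{prop: basic properties}(ii): we have $N(\infty_I;\Omega)=D^\circ$. By Farkas' lemma, the polar of the finitely generated halfspace intersection $D$ is the finitely generated cone $\sum_{j\in J}\R_+a_j$, which is automatically closed; with $J=\emptyset$ this reads $D=\R^n$ and $D^\circ=\{0\}$.

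For $D\subset T(\infty_I;\Omega)$, I would use Proposition~\ref{prop: cones at infinity}(i), which expresses $T(\infty_I;\Omega)$ as the inner limit of $T(x;\Omega)$ as $\pi_I(x)\xrightarrow{\Omega}\infty$. For a polyhedral set the finite-point tangent cone is $T(x;\Omega)=\{d\mid \langle a_j,d\rangle\le 0,\ j\in\mathcal A(x)\}$. This is standard: take $t>0$ small so that the inactive constraints stay strict, and $\cone(\Omega-x)$ is then already closed. Given any sequence $x_k\in\Omega$ with $\|\pi_I(x_k)\|\to\infty$, Lemma~\ref{lem: asymptotic active} yields $\mathcal A(x_k)\subset J$ for large $k$. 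Hence $D\subset T(x_k;\Omega)$ eventually, and the constant sequence $w_k=d$ (for small $k$, any element of $T(x_k;\Omega)$) shows $d$ lies in the inner limit. Alternatively, one can argue directly from the definition: since $\mathcal A(x_k)\subset J$, the constraints $j\notin J$ are inactive at $x_k$. One must then be careful that $x_k+t_kd\in\Omega$ even though the slack $b_j-\langle a_j,x_k\rangle$ may be tiny relative to $t_k$. This is exactly why routing through Proposition~\ref{prop: cones at infinity}(i), which handles uniformity via projections, is preferable.

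For $T(\infty_I;\Omega)\subset D$, fix $j\in J$ and $w\in T(\infty_I;\Omega)$. Since $\pi_I(F_j)$ is unbounded, choose $x_k\in F_j$ with $\|\pi_I(x_k)\|\to\infty$, and note $j\in\mathcal A(x_k)$. Using the definition of $T(\infty_I;\Omega)$ with $t_k=1/k$, we obtain $w_k\to w$ with $x_k+t_kw_k\in\Omega$. Then $\langle a_j,x_k+t_kw_k\rangle\le b_j=\langle a_j,x_k\rangle$, so $\langle a_j,w_k\rangle\le 0$, and passing to the limit gives $\langle a_j,w\rangle\le0$. Equivalently, $a_j\in N(x_k;\Omega)$ for all $k$, so Proposition~\ref{prop: basic properties}(iii) gives $a_j\in N(\infty_I;\Omega)$ directly; this also yields $\sum_{j\in J}\R_+a_j\subset N(\infty_I;\Omega)$ by convexity.

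The main obstacle is the first inclusion. It needs the description of the finite-point tangent cone of a polyhedron via active constraints, and it needs uniformity along arbitrary sequences, which Lemma~\ref{lem: asymptotic active} together with Proposition~\ref{prop: cones at infinity}(i) supplies. Closedness of $\Omega$ is automatic for polyhedra, so Proposition~\ref{prop: cones at infinity} applies.
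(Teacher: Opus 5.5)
Your proposal is correct and follows the paper's proof essentially step for step. You prove $D\subset T(\infty_I;\Omega)$ using Lemma~\ref{lem: asymptotic active}, the active-constraint formula for $T(x_k;\Omega)$, and the constant sequence in Proposition~\ref{prop: cones at infinity}(i), and you get the normal cone from polarity plus Farkas. The only difference is in $T(\infty_I;\Omega)\subset D$: you argue directly from the definition with $x_k\in F_j$ (or via $a_j\in N(x_k;\Omega)$ and Proposition~\ref{prop: basic properties}(iii)), whereas the paper passes through Proposition~\ref{prop: cones at infinity}(i) and the finite-point tangent formula; your version is slightly more economical but equivalent.
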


\begin{proof}
Define the set 
\begin{equation} \label{eq: formular for polyhedral}
	D:=\{d\in\R^n\mid \langle a_j,d\rangle\le 0 \text{ for all } j\in J^\infty_I(\Omega)\}.
\end{equation}

\noindent\emph{Inclusion $T(\infty_I;\Omega)\subset D$.} Let $d\in T(\infty_I;\Omega)$ and fix $j_0\in J^\infty_I(\Omega)$. By Definition~\ref{def: asymptotic active set}, $\pi_I(F_{j_0})$ is unbounded, so there exists a sequence $\{x_k\}\subset F_{j_0}$ with $\|\pi_I(x_k)\|\to\infty$. By Proposition~\ref{prop: cones at infinity}(i), there exists $d_k\to d$ with $d_k\in T(x_k;\Omega)$. Since $x_k\in F_{j_0}$ means that $\langle a_{j_0},x_k\rangle=b_{j_0}$, we have $j_0\in\mathcal{A}(x_k)$. The standard polyhedral tangent-cone formula \cite[Theorem~6.46]{rockafellar-wets} gives us $\langle a_{j_0},d_k\rangle\le 0$ for all $k$. Passing to the limit yields $\langle a_{j_0},d\rangle\le 0$. Since $j_0\in J^\infty_I(\Omega)$ is arbitrary, we see that $d\in D$.

\smallskip
\noindent\emph{Inclusion $D\subset T(\infty_I;\Omega)$.} Let $d\in D$ and fix any sequence $\{x_k\}\subset\Omega$ with $\|\pi_I(x_k)\|\to\infty$. By Lemma~\ref{lem: asymptotic active}, there exists $K\in\N$ such that $\mathcal{A}(x_k)\subset J^\infty_I(\Omega)$ for all $k\ge K$. For a polyhedral convex set $\Omega$,
\[
    T(x_k;\Omega) = \{v\in\R^n\mid \langle a_j,v\rangle\le 0 \text{ for all } j\in\mathcal{A}(x_k)\}.
\]
Since $d\in D$ and $\mathcal{A}(x_k)\subset J^\infty_I(\Omega)$, we have $d\in T(x_k;\Omega)$ for all $k\ge K$. Taking the constant sequence $d_k:=d$ in the inner-limit characterization from Proposition~\ref{prop: cones at infinity}(i), we conclude $d\in T(\infty_I;\Omega)$.

\smallskip
\noindent\emph{Normal cone formula.} By Proposition~\ref{prop: basic properties}(ii), we have $N(\infty_I;\Omega) = T(\infty_I;\Omega)^\circ = D^\circ$, where $D$ is defined by \eqref{eq: formular for polyhedral}. By Farkas' Lemma \cite[Corollary~22.3.1]{rockafellar-convex}, the polar of $D$ is given by
\[
    D^\circ = \sum_{j\in J^\infty_I(\Omega)} \R_+ a_j,
\]
which completes the proof.
\end{proof}

\begin{remark} \label{rem: polyhedral recession equality}
It is well-known that the recession cone of $\Omega$ admits the following presentation
\[
    \Omega^\infty = \{d\in\R^n\mid \langle a_j,d\rangle\le 0 \text{ for all } j = 1,\dots,m\}.
\]
Therefore, by Proposition~\ref{prop: recession inclusion} we have the inclusion $\Omega^\infty\subset T(\infty_I;\Omega)$, which also follows from Theorem~\ref{prop: polyhedral cones at infinity}.

In the polyhedral convex setting, it is obvious that the equality $T(\infty_I;\Omega)=\Omega^\infty$ holds if and only if we have the implication
\begin{equation} \label{eq: condition for recession equality}
	\big[\langle a_l,d\rangle\le 0 \quad \text{for all} \quad l \in J^\infty_I(\Omega)\big] \qquad \Longrightarrow \qquad \big[\langle a_j,d\rangle\le 0 \quad \text{for all} \quad j \notin J^\infty_I(\Omega)\big].
\end{equation}
In particular, this equality holds if $J^\infty_I(\Omega)=\{1,\dots,m\}$. For instance, let
\[
    \Omega=\{(x_1,x_2)\in\R^2\mid x_1\ge 0,\ x_2\ge 0,\ x_1+x_2\ge 1\}
\]
with $I=\{1,2\}$. We can verify that $J^\infty_I(\Omega)=\{1,2\}\subsetneq\{1,2,3\}$. Observe also that if $-d_1\le 0$ and $-d_2\le 0$, then $-d_1-d_2\le 0$, so \eqref{eq: condition for recession equality} is satisfied. Therefore, $T(\infty_I;\Omega)=\Omega^\infty=\R_+^2$.
\end{remark}


\subsection{Calculus Rules}
In this subsection, we establish calculus rules for the tangent and normal cones at infinity under products, intersections, and linear mappings.

\begin{proposition} \label{prop: normal product rules}
Let $\Omega_1 \subset \R^n$ and $\Omega_2 \subset \R^m$ be nonempty convex sets. For two nonempty index sets $I \subset \{1,\ldots,n\}$ and $J \subset \{1,\ldots,m\}$, put $K := I \cup \{n + j \mid j \in J\} \subset \{1,\ldots,n+m\}$, and denote the coordinate projections by
\[
\pi_I: \R^n \to \R^{|I|}, \quad \pi_J: \R^m \to \R^{|J|}, \quad \pi_K: \R^n \times \R^m \to \R^{|K|}.
\]
Assume that $\pi_I(\Omega_1)$ and $\pi_J(\Omega_2)$ are unbounded. Then:
\begin{enumerate}[\rm (i)]
    \item $T(\infty_K;\Omega_1 \times \Omega_2) \subset T(\infty_I;\Omega_1) \times T(\infty_J;\Omega_2)$;
    \item $N(\infty_I;\Omega_1) \times N(\infty_J;\Omega_2) \subset N(\infty_K;\Omega_1 \times \Omega_2)$.
\end{enumerate}
\end{proposition}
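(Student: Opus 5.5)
Part (i) follows from a direct sequential argument: we verify membership in each factor cone by freezing the other coordinate at a fixed point. Part (ii) then follows from (i) by polarity, using Proposition~\ref{prop: basic properties}(ii) and Lemma~\ref{lem: polarity rules}(ii).

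\emph{Step 1: the first component.} Let $(w_1,w_2)\in T(\infty_K;\Omega_1\times\Omega_2)$. To show $w_1\in T(\infty_I;\Omega_1)$, fix arbitrary sequences $\{x_k\}\subset\Omega_1$ with $\|\pi_I(x_k)\|\to\infty$ and $t_k\searrow 0$. Fix any point $\bar y\in\Omega_2$, which exists since $\Omega_2$ is nonempty. Then $(x_k,\bar y)\in\Omega_1\times\Omega_2$. The coordinates indexed by $K$ are those of $I$ in the first block together with those of $J$ in the second block, so
\[
\|\pi_K(x_k,\bar y)\|^2=\|\pi_I(x_k)\|^2+\|\pi_J(\bar y)\|^2 .
\]
Hence $\|\pi_K(x_k,\bar y)\|\ge\|\pi_I(x_k)\|\to\infty$. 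By the definition of $T(\infty_K;\Omega_1\times\Omega_2)$, applied to the sequences $\{(x_k,\bar y)\}$ and $\{t_k\}$, there exist $(w_{1k},w_{2k})\to(w_1,w_2)$ with $(x_k,\bar y)+t_k(w_{1k},w_{2k})\in\Omega_1\times\Omega_2$. Reading off the first component gives $x_k+t_kw_{1k}\in\Omega_1$ with $w_{1k}\to w_1$. Since the sequences were arbitrary, $w_1\in T(\infty_I;\Omega_1)$.

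\emph{Step 2: the second component.} The argument is symmetric. Fix $\bar x\in\Omega_1$ and use the sequence $(\bar x,y_k)$, where $\|\pi_J(y_k)\|\to\infty$. This gives $w_2\in T(\infty_J;\Omega_2)$ and proves (i).

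\emph{Step 3: part (ii).} By Proposition~\ref{prop: basic properties}(ii), $N(\infty_K;\Omega_1\times\Omega_2)=[T(\infty_K;\Omega_1\times\Omega_2)]^\circ$. By (i) and Lemma~\ref{lem: polarity rules}(ii), this contains $[T(\infty_I;\Omega_1)\times T(\infty_J;\Omega_2)]^\circ$. Next, for cones $C_1\subset\R^n$ and $C_2\subset\R^m$ containing the origin we have the elementary inclusion $C_1^\circ\times C_2^\circ\subset(C_1\times C_2)^\circ$: if $v_1\in C_1^\circ$, $v_2\in C_2^\circ$ and $(c_1,c_2)\in C_1\times C_2$, then $\langle (v_1,v_2),(c_1,c_2)\rangle=\langle v_1,c_1\rangle+\langle v_2,c_2\rangle\le 0$. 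Applying this with $C_1=T(\infty_I;\Omega_1)$ and $C_2=T(\infty_J;\Omega_2)$, and using Proposition~\ref{prop: basic properties}(ii) once more to identify $C_1^\circ=N(\infty_I;\Omega_1)$ and $C_2^\circ=N(\infty_J;\Omega_2)$, gives (ii).

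There is no real obstacle here. The only points needing care are that the frozen coordinate must come from a nonempty set, and that $\|\pi_K\|$ dominates each block's projected norm, so that the inner-limit definition over $\Omega_1\times\Omega_2$ applies to the frozen sequences.
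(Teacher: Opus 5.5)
Your proposal is correct and follows essentially the paper's argument: (i) is a direct sequential check against the inner-limit definition, and (ii) follows from (i) by polarity. The only minor difference is that you freeze the other factor at a fixed point $\bar y\in\Omega_2$ (resp.\ $\bar x\in\Omega_1$). The paper instead pairs $\{x_{1k}\}$ with an escaping sequence $\{x_{2k}\}\subset\Omega_2$, which uses the unboundedness of $\pi_J(\Omega_2)$, whereas your variant needs only nonemptiness of the other factor.
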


\begin{proof}
(i) Let $(w_1,w_2) \in T(\infty_K;\Omega_1 \times \Omega_2)$. Fix any sequences $\{x_{1k}\}\subset\Omega_1$, $\{x_{2k}\}\subset\Omega_2$, and $\{t_k\}\subset\R_+$ such that $\|\pi_I(x_{1k})\|\to\infty$, $\|\pi_J(x_{2k})\|\to\infty$, and $t_k\searrow 0$. Then $\|\pi_K(x_{1k},x_{2k})\| \to \infty$. By definition, there exist sequences $\{w_{1k}\}\subset\R^n$ and $\{w_{2k}\}\subset\R^m$ such that $w_{1k}\to w_1$, $w_{2k}\to w_2$, and
    \[
    (x_{1k},x_{2k}) + t_k(w_{1k},w_{2k}) \in \Omega_1 \times \Omega_2, \quad \;\ \text{for all}~ k\in\N.
    \]
Equivalently, we have $x_{1k} + t_kw_{1k} \in \Omega_1$ and $x_{2k} + t_kw_{2k} \in \Omega_2$ for all $k$, which implies that $w_1 \in T(\infty_I;\Omega_1)$ and $w_2 \in T(\infty_J;\Omega_2)$.

(ii) Let $v_1 \in N(\infty_I;\Omega_1)$ and $v_2 \in N(\infty_J;\Omega_2)$. For any $w_1 \in T(\infty_I;\Omega_1)$ and $w_2 \in T(\infty_J;\Omega_2)$ we have
    \[
    \langle (v_1,v_2), (w_1,w_2) \rangle = \langle v_1, w_1 \rangle + \langle v_2, w_2 \rangle \le 0.
    \]
By (i), this inequality holds for all $(w_1,w_2) \in T(\infty_K;\Omega_1 \times \Omega_2)$. Hence $(v_1,v_2) \in N(\infty_K;\Omega_1 \times \Omega_2)$.
\end{proof}

\begin{remark} \label{rem: polyhedral product}
 Suppose that, for $\ell=1,2$, the set $\Omega_\ell=\bigcap_{s=1}^{m_\ell}\{x_\ell\mid \langle a_s^\ell,x_\ell\rangle\le b_s^\ell\}$ is a polyhedral convex set with $\pi_I(\Omega_1)$ and $\pi_J(\Omega_2)$ unbounded. Then $\Omega_1\times\Omega_2\subset\R^{n+m}$ is a polyhedral convex set, defined by $m_1+m_2$ constraints: the type-1 constraints $\langle(a_s^1,0),(x_1,x_2)\rangle\le b_s^1$ ($s=1,\dots,m_1$) and the type-2 constraints $\langle(0,a_s^2),(x_1,x_2)\rangle\le b_s^2$ ($s=1,\dots,m_2$). In this case, the tangent and normal cones of $\Omega_1\times\Omega_2$ at infinity can be presented in terms of the recession cones of $\Omega_1$ and $\Omega_2$ as shown below.

Since $\pi_I(\Omega_1)$ is unbounded, we have $\Omega_1$ is unbounded; similarly $\Omega_2$. The face of $\Omega_1\times\Omega_2$ associated with a type-1 constraint $s$ is $F_s^1\times\Omega_2$, whose $\pi_K$-projection is $\pi_I(F_s^1)\times\pi_J(\Omega_2)$. This is unbounded because $\pi_J(\Omega_2)$ is unbounded; hence the type-1 constraint $s$ lies in $J^\infty_K(\Omega_1\times\Omega_2)$. By the symmetric argument, every type-2 constraint also lies in $J^\infty_K(\Omega_1\times\Omega_2)$. Therefore $J^\infty_K(\Omega_1\times\Omega_2)$ exhausts all $m_1+m_2$ defining constraints, and Theorem~\ref{prop: polyhedral cones at infinity} gives us
\[
    T(\infty_K;\Omega_1\times\Omega_2) = \left\{(d_1,d_2) \in \R^2 \;\middle|
    \begin{array}{l}
        \langle a_{s_1}^1,d_1\rangle\le 0, \quad s_1=1,\dots,m_1,\\
        \langle a_{s_2}^2,d_2\rangle\le 0, \quad s_2=1,\dots,m_2
    \end{array}\right\} = \Omega_1^\infty\times\Omega_2^\infty,
\]
\[
    N(\infty_K;\Omega_1\times\Omega_2) = \sum_s \R_+(a_s^1,0) + \sum_s \R_+(0,a_s^2) = (\Omega_1^\infty)^\circ\times(\Omega_2^\infty)^\circ.
\]
\end{remark}

\begin{remark} \label{rem: product strict}
The inclusions in Proposition~\ref{prop: normal product rules}(i) and (ii) may be strict even in the polyhedral convex case. For instance, let $n=m=1$, $\Omega_1=\Omega_2=\R_+$, and $I=J=\{1\}$, so that $K=\{1,2\}$. By a direct computation,
\begin{align*}
    &T(\infty_K;\Omega_1 \times \Omega_2) = \R^2_+, \quad \text{while} \quad T(\infty_I;\Omega_1) \times T(\infty_J;\Omega_2) = \R^2, \\
    &N(\infty_K;\Omega_1 \times \Omega_2) = -\R^2_+, \quad \text{while} \quad N(\infty_I;\Omega_1) \times N(\infty_J;\Omega_2) = \{(0,0)\}.
\end{align*}
\end{remark}

We next introduce an asymptotic qualification condition, formulated in terms of normal cones at infinity, as follows.

\begin{lemma}\label{lem: normal separation at infinity}
Let $\Omega_1,\Omega_2\subset\R^n$ be closed convex sets such that $\Omega_1\cap\Omega_2$ is nonempty and unbounded. Assume that
    \begin{equation} \label{eq: normal separation at infinity}
    N(\infty_I;\Omega_1)\cap\bigl(-N(\infty_I;\Omega_2)\bigr)=\{0\}.
    \end{equation}
Then there exist constants $\alpha>0$ and $\gamma>0$ such that, for every $x\in \Omega_1\cap\Omega_2\setminus\gamma B_I$, the estimate
\begin{equation} \label{eq: tangent intersection error bound}
    {\rm d}\bigl(w,\,T(x;\Omega_1)\cap T(x;\Omega_2)\bigr)
    \le \alpha\Bigl({\rm d}\bigl(w, T(x;\Omega_1)\bigr)+{\rm d}\bigl(w, T(x;\Omega_2)\bigr)\Bigr)
\end{equation}
holds for all $w\in\R^n$, and
\begin{equation} \label{eq: normal separation at finite}
    N(x;\Omega_1)\cap\bigl(-N(x;\Omega_2)\bigr)=\{0\}.
\end{equation}
\end{lemma}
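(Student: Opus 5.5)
We argue by contradiction, using compactness of the unit sphere together with the closedness of $N(\cdot;\Omega_i)$ at infinity from Proposition~\ref{prop: basic properties}(iii). We take \eqref{eq: normal separation at finite} first. Suppose it fails for every $\gamma$. Then there are $x_k\in\Omega_1\cap\Omega_2$ with $\|\pi_I(x_k)\|\to\infty$ and unit vectors $v_k\in N(x_k;\Omega_1)\cap(-N(x_k;\Omega_2))$. Passing to a subsequence, $v_k\to v$ with $\|v\|=1$. Since $x_k\in\Omega_i$ for $i=1,2$, Proposition~\ref{prop: basic properties}(iii) applied to $\Omega_1$ gives $v\in N(\infty_I;\Omega_1)$, and applied to $\Omega_2$ with $-v_k$ gives $-v\in N(\infty_I;\Omega_2)$. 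This contradicts \eqref{eq: normal separation at infinity}.

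For \eqref{eq: tangent intersection error bound}, write $T_i:=T(x;\Omega_i)$ and $N_i:=N(x;\Omega_i)=T_i^\circ$ (Proposition~\ref{prop: equivalent normal cones}(ii)). The plan is a dual reformulation via Lemma~\ref{lemma: moreau}. By \eqref{eq: distance supremum},
\[
{\rm d}(w;T_1\cap T_2)=\sup\{\langle s,w\rangle \mid s\in (T_1\cap T_2)^\circ,\ \|s\|\le 1\}.
\]
By Lemma~\ref{lem: polarity rules}(iv), $(T_1\cap T_2)^\circ=\cl(N_1+N_2)$. Once \eqref{eq: normal separation at finite} holds, Lemma~\ref{lem: closedness of sum} shows that $N_1+N_2$ is closed. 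Hence every unit $s$ in this polar can be written as $s=u_1+u_2$ with $u_i\in N_i$. Then \eqref{eq: polar bound} gives
\[
\langle s,w\rangle\le \|u_1\|\,{\rm d}(w;T_1)+\|u_2\|\,{\rm d}(w;T_2).
\]
So it suffices to find a uniform $\alpha$ and $\gamma$ such that, for all $x\in\Omega_1\cap\Omega_2\setminus\gamma B_I$, every $s\in N_1+N_2$ with $\|s\|\le1$ admits a decomposition with $\|u_1\|,\|u_2\|\le\alpha$.

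This uniform bound is the main step, and again we argue by contradiction. Suppose there are $x_k$ with $\|\pi_I(x_k)\|\to\infty$ and $s_k=u_{1k}+u_{2k}$ with $\|s_k\|\le 1$ and $u_{ik}\in N(x_k;\Omega_i)$, such that every decomposition of $s_k$ has $\max\|u_{ik}\|>k$. We may choose the decomposition that minimizes $\|u_{1k}\|+\|u_{2k}\|$; a minimizer exists by closedness and coercivity, since \eqref{eq: normal separation at finite} holds for large $k$. Set $M_k:=\|u_{1k}\|+\|u_{2k}\|\to\infty$ and normalize: $u_{ik}/M_k\to \bar u_i$ along a subsequence. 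Then $\|\bar u_1\|+\|\bar u_2\|=1$, and $\bar u_1+\bar u_2=\lim s_k/M_k=0$. By Proposition~\ref{prop: basic properties}(iii), $\bar u_i\in N(\infty_I;\Omega_i)$, so $\bar u_1=-\bar u_2\in N(\infty_I;\Omega_1)\cap(-N(\infty_I;\Omega_2))$ and is nonzero. This contradicts \eqref{eq: normal separation at infinity}. Note that we never needed the minimizing choice, only that every decomposition is large; taking an arbitrary one suffices.

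We therefore obtain $\gamma$ and $\alpha$ such that ${\rm d}(w;T_1\cap T_2)\le \alpha({\rm d}(w;T_1)+{\rm d}(w;T_2))$. Taking the larger of the two $\gamma$'s from the two contradiction arguments finishes the proof.
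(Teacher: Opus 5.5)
Your proof is correct and follows essentially the same route as the paper: normalize, extract a convergent subsequence, use Proposition~\ref{prop: basic properties}(iii) to contradict \eqref{eq: normal separation at infinity}, then bound ${\rm d}(w;T_1\cap T_2)$ via \eqref{eq: distance supremum}, \eqref{eq: polar bound} and $(T_1\cap T_2)^\circ=\cl(N_1+N_2)$. The differences are organizational. The paper proves one uniform non-cancellation estimate $\|u+v\|\ge\eta(\|u\|+\|v\|)$ for $u\in N(x;\Omega_1)$, $v\in N(x;\Omega_2)$, and both conclusions follow from it. It handles the closure with approximating sequences $u_k+v_k\to s$ instead of Lemma~\ref{lem: closedness of sum}, so your separate first contradiction argument and the closedness step are not needed there.
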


\begin{proof}
We claim that there exist $\eta>0$ and $\gamma>0$ such that, for every $x\in\Omega_1\cap\Omega_2\setminus\gamma B_I$,
\begin{equation} \label{eq: normal non cancellation}
    \|u+v\|\ge \eta\bigl(\|u\|+\|v\|\bigr) \quad \text{for all } u\in N(x;\Omega_1),\ v\in N(x;\Omega_2).
\end{equation}
Suppose to the contrary that \eqref{eq: normal non cancellation} fails. Then we can find sequences $\{x_k\}\subset\Omega_1\cap\Omega_2$, $\{u_k\}\subset\R^n$, and $\{v_k\}\subset\R^n$ with $\|\pi_I(x_k)\|>k$, $u_k\in N(x_k;\Omega_1)$, and $v_k\in N(x_k;\Omega_2)$ for all $k\in\N$, such that
    \[
    \|u_k+v_k\|<\frac1k\bigl(\|u_k\|+\|v_k\|\bigr).
    \]
Define the vectors
    \[
    \widehat u_k:=\frac{u_k}{\|u_k\|+\|v_k\|},\qquad \widehat v_k:=\frac{v_k}{\|u_k\|+\|v_k\|}.
    \]
Then $\widehat u_k\in N(x_k;\Omega_1)$, $\widehat v_k\in N(x_k;\Omega_2)$ and
    \[
    \|\widehat u_k\|+\|\widehat v_k\|=1,\qquad \|\widehat u_k+\widehat v_k\|<\frac1k.
    \]
Passing to a subsequence without relabeling, we may assume that $\widehat u_k\to\bar u$ and $\widehat v_k\to\bar v$. Consequently,
    \[
    \bar u+\bar v=0,\qquad \|\bar u\|+\|\bar v\|=1,
    \]
which implies that $\bar u\neq 0$. Since $\|\pi_I(x_k)\|\to\infty$, Proposition~\ref{prop: basic properties}(iii) yields that $\bar u\in N(\infty_I;\Omega_1)$ and $\bar v\in N(\infty_I;\Omega_2)$. Using $\bar v=-\bar u$, we obtain
    \[
    0\neq\bar u\in N(\infty_I;\Omega_1)\cap\bigl(-N(\infty_I;\Omega_2)\bigr),
    \]
which contradicts \eqref{eq: normal separation at infinity}.

We now derive \eqref{eq: tangent intersection error bound} and \eqref{eq: normal separation at finite} from \eqref{eq: normal non cancellation}. Fix $x\in\Omega_1\cap\Omega_2\setminus\gamma B_I$.

For \eqref{eq: tangent intersection error bound}, define the sets
    \[
    K:=T(x;\Omega_1),\qquad L:=T(x;\Omega_2),\qquad C:=K\cap L.
    \]
By Proposition~\ref{prop: equivalent normal cones}(ii) and Lemma~\ref{lem: polarity rules}(iv), we have $K^\circ=N(x;\Omega_1)$, $L^\circ=N(x;\Omega_2)$, and $C^\circ=\cl(K^\circ+L^\circ)$. Let $s\in C^\circ$ with $\|s\|\le 1$. Choose sequences $\{u_k\}\subset K^\circ$ and $\{v_k\}\subset L^\circ$ such that $u_k+v_k\to s$. By \eqref{eq: normal non cancellation}, it follows that
    \[
    \|u_k\|+\|v_k\|\le \frac{1}{\eta}\,\|u_k+v_k\|.
    \]
Hence $\|u_k\|+\|v_k\|\le 2/\eta$ for all sufficiently large $k$. Next, by Lemma~\ref{lemma: moreau}, for every $w\in\R^n$ we have
    \[
    \langle u_k,w\rangle\le \|u_k\|{\rm d}(w; K),\qquad \langle v_k,w\rangle\le \|v_k\|{\rm d}(w; L).
    \]
Therefore, for all large $k$, we deduce that
    \[
    \langle u_k+v_k,w\rangle\le \frac{2}{\eta}\bigl({\rm d}(w; K)+{\rm d}(w; L)\bigr).
    \]
Letting $k\to\infty$ yields the same bound for $\langle s,w\rangle$. Taking the supremum over $s\in C^\circ$ with $\|s\|\le 1$ and using Lemma~\ref{lemma: moreau}, we conclude that
\[
{\rm d}(w; C)\le \frac{2}{\eta}\bigl({\rm d}(w; K)+{\rm d}(w; L)\bigr).
\]
This is exactly \eqref{eq: tangent intersection error bound} with $\alpha:=2/\eta$.

For \eqref{eq: normal separation at finite}, let $u\in N(x;\Omega_1) \cap \bigl(-N(x;\Omega_2)\bigr)$. Writing $v:=-u$, we have $v\in N(x;\Omega_2)$. Applying \eqref{eq: normal non cancellation} to the pair $(u,v)$ gives us
    \[
    0=\|u+v\|\ge \eta\bigl(\|u\|+\|v\|\bigr)=2\eta\|u\|.
    \]
Thus, $u = 0$, which ends the proof.
\end{proof}

Using Lemma~\ref{lem: normal separation at infinity}, we now derive the following intersection rule for tangent and normal cones at infinity.

\begin{proposition} \label{prop: normal intersection rule}
Let $\Omega_1, \Omega_2 \subset \R^n$ be closed convex sets such that $\Omega_1 \cap \Omega_2$ is nonempty and unbounded. Assume that \eqref{eq: normal separation at infinity} holds. Then:
\begin{enumerate}[\rm (i)]
    \item $T(\infty_I;\Omega_1) \cap T(\infty_I;\Omega_2) \subset T(\infty_I;\Omega_1 \cap \Omega_2)$;
    \item $N(\infty_I;\Omega_1 \cap \Omega_2) \subset N(\infty_I;\Omega_1) + N(\infty_I;\Omega_2)$.
\end{enumerate}
\end{proposition}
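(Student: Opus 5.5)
The plan is to prove (i) through the inner-limit characterization of Proposition~\ref{prop: cones at infinity}(i), applied to the closed convex set $\Omega_1\cap\Omega_2$, together with the error bound \eqref{eq: tangent intersection error bound} from Lemma~\ref{lem: normal separation at infinity}. Statement (ii) then follows from (i) by polarity.

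\emph{Step 1 (proof of (i)).} Fix $w\in T(\infty_I;\Omega_1)\cap T(\infty_I;\Omega_2)$ and an arbitrary sequence $\{x_k\}\subset\Omega_1\cap\Omega_2$ with $\|\pi_I(x_k)\|\to\infty$. Since each $x_k$ lies in $\Omega_1$ and in $\Omega_2$ separately, Proposition~\ref{prop: cones at infinity}(i) applied to $\Omega_1$ gives $u_k\in T(x_k;\Omega_1)$ with $u_k\to w$, and applied to $\Omega_2$ gives $v_k\in T(x_k;\Omega_2)$ with $v_k\to w$. Hence ${\rm d}(w;T(x_k;\Omega_1))\le\|w-u_k\|\to0$, and similarly for $\Omega_2$. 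For $k$ large we have $\|\pi_I(x_k)\|>\gamma$, so \eqref{eq: tangent intersection error bound} yields
\[
{\rm d}\bigl(w;T(x_k;\Omega_1)\cap T(x_k;\Omega_2)\bigr)\to0 .
\]

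\emph{Step 2 (identifying the intersection cone).} We need $T(x;\Omega_1)\cap T(x;\Omega_2)=T(x;\Omega_1\cap\Omega_2)$ for $x$ far out. The inclusion $\supset$ is trivial. For $\subset$, pass to polars: \eqref{eq: normal separation at finite} together with Lemma~\ref{lem: closedness of sum} shows that $N(x;\Omega_1)+N(x;\Omega_2)$ is closed. By Lemma~\ref{lem: polarity rules}(iv) it therefore equals $(T(x;\Omega_1)\cap T(x;\Omega_2))^\circ$. This closed cone is contained in $N(x;\Omega_1\cap\Omega_2)=T(x;\Omega_1\cap\Omega_2)^\circ$, since a sum of normals to the pieces is normal to the intersection. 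Taking polars again and using the bipolar theorem gives the desired inclusion.

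With this identification, choose $w_k\in T(x_k;\Omega_1\cap\Omega_2)$ as the projection of $w$ onto that cone, so $w_k\to w$. By Proposition~\ref{prop: cones at infinity}(i), $w\in T(\infty_I;\Omega_1\cap\Omega_2)$. This step is the main technical point; everything else is direct. A cleaner alternative, if desired, is to use the normal-cone sum formula under the finite-point qualification, but the polarity route above suffices.

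\emph{Step 3 (proof of (ii)).} By Proposition~\ref{prop: basic properties}(i)--(ii), taking polars of (i) with Lemma~\ref{lem: polarity rules}(ii),(iv) gives
\[
N(\infty_I;\Omega_1\cap\Omega_2)\subset\bigl(T(\infty_I;\Omega_1)\cap T(\infty_I;\Omega_2)\bigr)^\circ=\cl\bigl(N(\infty_I;\Omega_1)+N(\infty_I;\Omega_2)\bigr).
\]
Finally, hypothesis \eqref{eq: normal separation at infinity} and Lemma~\ref{lem: closedness of sum}, applied to the closed convex cones $K=N(\infty_I;\Omega_1)$ and $L=N(\infty_I;\Omega_2)$, show that the sum is closed. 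This removes the closure and completes the proof.
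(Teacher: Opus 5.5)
Your Steps~1 and~3 match the paper's proof, but Step~2 has a genuine gap: the polarity chain runs in the wrong direction. Write $T_\ell:=T(x;\Omega_\ell)$ and $N_\ell:=N(x;\Omega_\ell)$, and write $T_{12}$, $N_{12}$ for the cones of $\Omega_1\cap\Omega_2$ at $x$. You show $(T_1\cap T_2)^\circ=N_1+N_2\subset N_{12}=T_{12}^\circ$. Taking polars reverses this inclusion and gives only $T_{12}\subset T_1\cap T_2$. That is the trivial inclusion you had already set aside.

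The inclusion you need, $T_1\cap T_2\subset T_{12}$, is equivalent by polarity to $N_{12}\subset\cl(N_1+N_2)$. This is the nontrivial half of the finite-point normal-cone intersection rule. The fact that ``a sum of normals to the pieces is normal to the intersection'' supplies only the easy half.

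Closedness of $N_1+N_2$ cannot repair this. Take the disks $\Omega_1=\{(a,b)\mid a^2+(b-1)^2\le 1\}$ and $\Omega_2=\{(a,b)\mid a^2+(b+1)^2\le 1\}$ at $x=0$. The sum $N_1+N_2=\{0\}\times\R$ is closed, yet $T_1\cap T_2=\R\times\{0\}\not\subset\{0\}=T_{12}$. So the qualification \eqref{eq: normal separation at finite} must be used more substantively than just to feed Lemma~\ref{lem: closedness of sum}.

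The missing ingredient is the finite-point intersection rule
$N(x;\Omega_1\cap\Omega_2)=N(x;\Omega_1)+N(x;\Omega_2)$, valid when $N(x;\Omega_1)\cap\bigl(-N(x;\Omega_2)\bigr)=\{0\}$. This is a separation-type result: the qualification says exactly that $\Omega_1$ and $\Omega_2$ cannot be separated by a nonzero functional, i.e.\ $0\in\inte(\Omega_1-\Omega_2)$. With it you get $T_1\cap T_2=(N_1+N_2)^\circ\subset N_{12}^\circ=T_{12}$.

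The paper obtains the identity $T(x_k;\Omega_1)\cap T(x_k;\Omega_2)=T(x_k;\Omega_1\cap\Omega_2)$ by citing precisely this fact, \cite[Corollary~2.59]{nambook}. The ``cleaner alternative'' you mention in passing is therefore not optional; it is what makes Step~2 correct. Once you substitute it for your polarity argument, your proof coincides with the paper's.
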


\begin{proof}
(i) Fix $w \in T(\infty_I;\Omega_1)\cap T(\infty_I;\Omega_2)$ and let $\{x_k\}\subset\Omega_1\cap\Omega_2$ be any sequence with $\|\pi_I(x_k)\|\to\infty$. With $\gamma>0$ as in Lemma~\ref{lem: normal separation at infinity}, we have $x_k\in\Omega_1\cap\Omega_2\setminus\gamma B_I$ for all $k$ sufficiently large. Since $w \in T(\infty_I;\Omega_\ell)$ for $\ell=1,2$, we have ${\rm d}\bigl(w, T(x_k;\Omega_1)\bigr)\to 0$ and ${\rm d}\bigl(w, T(x_k;\Omega_2)\bigr)\to 0$. Applying \eqref{eq: tangent intersection error bound} with $x=x_k$ yields
    \[
    {\rm d}\bigl(w,\, T(x_k;\Omega_1)\cap T(x_k;\Omega_2)\bigr)\to 0.
    \]
By \eqref{eq: normal separation at finite}, together with \cite[Corollary~2.59]{nambook}, we have
    \[
    T(x_k;\Omega_1)\cap T(x_k;\Omega_2)=T(x_k;\Omega_1\cap\Omega_2).
    \]
Therefore, ${\rm d}\bigl(w,\,T(x_k;\Omega_1\cap\Omega_2)\bigr)\to 0$. Equivalently, we can choose $w_k\in T(x_k;\Omega_1\cap\Omega_2)$ with $w_k\to w$. By Proposition~\ref{prop: cones at infinity}(i), we conclude that $w\in T(\infty_I;\Omega_1\cap\Omega_2)$.

(ii) follows by taking polars in (i) and using Lemma~\ref{lem: closedness of sum} to drop the closure.
\end{proof}

\begin{remark} \label{rem: polyhedral intersection qualification}
If $\Omega_\ell=\bigcap_{j=1}^{m_\ell}\{x\in\R^n\mid \langle a_j^\ell,x\rangle\le b_j^\ell\}$ is a polyhedral convex set for $\ell=1,2$, then $N(\infty_I;\Omega_\ell)=\sum_{j\in J^\infty_I(\Omega_\ell)}\R_+ a_j^\ell$ by Theorem~\ref{prop: polyhedral cones at infinity}. In this case, we can verify that the following \emph{positive linear independence} is a sufficient condition for \eqref{eq: normal separation at infinity}:
\[
    \sum_{j\in J^\infty_I(\Omega_1)}\alpha_j a_j^1 + \sum_{j\in J^\infty_I(\Omega_2)}\beta_j a_j^2 = 0,\ \alpha_j,\beta_j\ge 0
    \;\Longrightarrow\; \alpha_j=\beta_j=0.
\]
\end{remark}

\begin{remark} \label{rem: normal intersection rule}
Consider the following \emph{error bound at infinity}: there exist constants $\alpha>0$, $\gamma>0$, and $\varepsilon>0$ such that
\begin{equation} \label{eq: distance of intersection}
{\rm d}(x; \Omega_1 \cap \Omega_2) \le \alpha\,{\rm d}(x; \Omega_2),
\quad \;\ \text{for all}~ x \in \Omega_1 \setminus \gamma B_I ~ \text{with} ~ {\rm d}(x; \Omega_2) < \varepsilon.
\end{equation}
We will show that, under \eqref{eq: distance of intersection}, Proposition~\ref{prop: normal intersection rule} can be proved without invoking \eqref{eq: normal separation at infinity}. It suffices to check part~(i). Fix $w \in T(\infty_I;\Omega_1) \cap T(\infty_I;\Omega_2)$. Let $\{t_k\}\subset\R_+$ and $\{x_k\}\subset\Omega_1\cap\Omega_2$ be any sequences with $t_k\searrow 0$ and $\|\pi_I(x_k)\|\to\infty$. By definition, there exist sequences $\{w_{1k}\}\subset\R^n$ and $\{w_{2k}\}\subset\R^n$ such that $w_{1k} \to w$, $w_{2k} \to w$, and, for every $k \in \N$,
    \[
    x_k + t_kw_{1k} \in \Omega_1, \quad \text{and} \quad x_k + t_kw_{2k} \in \Omega_2.
    \]
Applying \eqref{eq: distance of intersection} to $x:=x_k+t_kw_{1k}$, for $k$ sufficiently large we get
    \[
    {\rm d}(x_k + t_kw_{1k}, \Omega_1 \cap \Omega_2) \le \alpha \, {\rm d}(x_k + t_kw_{1k}, \Omega_2).
    \]
Hence there exists $\overline{x}_k \in \Omega_1 \cap \Omega_2$ such that
    \[
    \|\overline{x}_k - x_k - t_kw_{1k}\| \le \alpha\|(x_k + t_kw_{2k}) - (x_k + t_kw_{1k})\| = \alpha t_k\|w_{2k} - w_{1k}\|.
    \]
Dividing the last inequality by $t_k$ and putting $w_k := (\overline{x}_k - x_k)/t_k$ yields
    \[
    \|w_k - w_{1k}\| \le \alpha\|w_{2k} - w_{1k}\|.
    \]
Letting $k \to \infty$ gives us $w_k \to w$. Moreover, $\overline{x}_k = x_k + t_kw_k \in \Omega_1 \cap \Omega_2$, and therefore $w \in T(\infty_I;\Omega_1 \cap \Omega_2)$.
\end{remark}

\begin{remark} \label{rem: tung-son comparison}
Our proof of Proposition~\ref{prop: normal intersection rule} differs from that of \cite[Proposition~3.8]{tung23b}: rather than working directly with normal cones at infinity, we first prove the tangent-cone inclusion and then pass to the normal cones by polarity, obtaining the finite-level condition \eqref{eq: normal separation at finite} from \eqref{eq: normal separation at infinity}. This gives us a shorter and more transparent proof.
\end{remark}

We next describe how tangent and normal cones at infinity behave under a linear map $A$.

\begin{proposition} \label{prop: normal linear image}
Let $\Omega\subset\R^n$ be a nonempty unbounded convex set, and let $A:\R^n\to\R^m$ be linear.
Fix nonempty index sets $I\subset\{1,\dots,n\}$ and $J\subset\{1,\dots,m\}$, and denote the projections by $\pi_I:\R^n\to\R^{|I|}$ and $\pi_J:\R^m\to\R^{|J|}$.
Assume that $\pi_J(A(\Omega))$ is unbounded and that
\begin{equation}\label{eq: compatibility infinity}
x_k\in\Omega,\ \|\pi_J(Ax_k)\|\to\infty
\quad\Longrightarrow\quad
\|\pi_I(x_k)\|\to\infty.
\end{equation}
Then:
\begin{enumerate}[\rm (i)]
    \item if $v\in T(\infty_I;\Omega)$, then $Av\in T(\infty_J;A(\Omega))$;

    \item if $w\in N(\infty_J;A(\Omega))$, then $A^\top w\in N(\infty_I;\Omega)$;

    \item if, in addition, $m=n$, $A$ is invertible, and
    \begin{equation}\label{eq: converse compatibility infinity}
        x_k\in\Omega,\ \|\pi_I(x_k)\|\to\infty
        \quad\Longrightarrow\quad
        \|\pi_J(Ax_k)\|\to\infty,
    \end{equation}
    then the converses of {\rm(i)} and {\rm(ii)} also hold. Equivalently,
    \[
    A\big(T(\infty_I;\Omega)\big)=T(\infty_J;A(\Omega))
    \quad\text{and}\quad
    A^\top N(\infty_J;A(\Omega))=N(\infty_I;\Omega).
    \]

\end{enumerate}
\end{proposition}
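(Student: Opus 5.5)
For (i) I will work directly from the definition of $T(\infty_J;A(\Omega))$ as an inner limit. Fix $v\in T(\infty_I;\Omega)$, and take arbitrary sequences $\{y_k\}\subset A(\Omega)$ and $t_k\searrow 0$ with $\|\pi_J(y_k)\|\to\infty$. Write $y_k=Ax_k$ with $x_k\in\Omega$. By the compatibility condition \eqref{eq: compatibility infinity}, $\|\pi_I(x_k)\|\to\infty$. The definition of $T(\infty_I;\Omega)$ then yields $v_k\to v$ with $x_k+t_kv_k\in\Omega$. Applying $A$ gives $y_k+t_kAv_k\in A(\Omega)$, and $Av_k\to Av$ by continuity of $A$. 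Since the sequences were arbitrary, $Av\in T(\infty_J;A(\Omega))$. Note that closedness of $\Omega$ is not needed here, because we never pass through Proposition~\ref{prop: cones at infinity}(i); we use only the original definition.

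For (ii) I will use polarity. Let $w\in N(\infty_J;A(\Omega))$ and $v\in T(\infty_I;\Omega)$. By (i), $Av\in T(\infty_J;A(\Omega))$, so
\[
\langle A^\top w,v\rangle=\langle w,Av\rangle\le 0.
\]
Since $v$ was arbitrary, $A^\top w\in T(\infty_I;\Omega)^\circ=N(\infty_I;\Omega)$ by Proposition~\ref{prop: basic properties}(ii).

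For (iii) I will apply (i) and (ii) to $A^{-1}$ acting on the convex set $A(\Omega)$, with the roles of $I$ and $J$ swapped. The hypotheses to check are as follows. First, $\pi_I(A^{-1}(A(\Omega)))=\pi_I(\Omega)$ must be unbounded; this follows from \eqref{eq: compatibility infinity} together with the unboundedness of $\pi_J(A(\Omega))$. Second, the compatibility condition for $A^{-1}$ reads: if $y_k\in A(\Omega)$ and $\|\pi_I(A^{-1}y_k)\|\to\infty$, then $\|\pi_J(y_k)\|\to\infty$. This is exactly \eqref{eq: converse compatibility infinity} after writing $y_k=Ax_k$. Applying (i) to $A^{-1}$ gives $A^{-1}T(\infty_J;A(\Omega))\subset T(\infty_I;\Omega)$, so $T(\infty_J;A(\Omega))\subset A(T(\infty_I;\Omega))$; combined with (i) this gives the tangent-cone equality. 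Applying (ii) to $A^{-1}$ gives $(A^{-1})^\top N(\infty_I;\Omega)\subset N(\infty_J;A(\Omega))$. Multiplying by $A^\top$ yields $N(\infty_I;\Omega)\subset A^\top N(\infty_J;A(\Omega))$, and combined with (ii) this gives the normal-cone equality.

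The only delicate point I expect is this verification that the hypotheses of (i) and (ii) transfer to $A^{-1}$, specifically the unboundedness of $\pi_I(\Omega)$ required by the standing assumption. This follows by choosing $x_k\in\Omega$ with $\|\pi_J(Ax_k)\|\to\infty$ and invoking \eqref{eq: compatibility infinity}. Everything else is routine.
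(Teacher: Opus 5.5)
Your proof is correct and follows the paper's own argument essentially step for step. Part (i) comes directly from the inner-limit definition via preimages and the compatibility condition, part (ii) by polarity, and part (iii) by applying (i)--(ii) to $A^{-1}$ on $A(\Omega)$. Your explicit check that the hypotheses transfer to $A^{-1}$ (unboundedness of $\pi_I(\Omega)$, and the compatibility condition for $A^{-1}$ being exactly the converse compatibility condition) is a detail the paper leaves implicit.
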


\begin{proof}
(i) Let $v\in T(\infty_I;\Omega)$. We prove that $Av\in T(\infty_J;A(\Omega))$. Fix arbitrary sequences $\{y_k\}\subset A(\Omega)$ and $\{t_k\}\subset\R_+$ with $\|\pi_J(y_k)\|\to\infty$ and $t_k\searrow 0$. We choose a sequence $\{x_k\}\subset\Omega$ such that $y_k=Ax_k$ for all $k$. By \eqref{eq: compatibility infinity}, $\|\pi_I(x_k)\|\to\infty$. Since $v\in T(\infty_I;\Omega)$, there exists a sequence $\{v_k\}\subset\R^n$ such that $v_k\to v$ and $x_k+t_kv_k\in\Omega$ for all $k$. Applying $A$ gives us
\[
y_k+t_kAv_k = A(x_k+t_kv_k)\in A(\Omega) \quad\text{for all } k,
\]
with $Av_k\to Av$. Hence $Av\in T(\infty_J;A(\Omega))$.

\smallskip
\noindent
(ii) Let $w\in N(\infty_J;A(\Omega))$. By definition, $\langle w,s\rangle\le 0$ for all $s\in T(\infty_J;A(\Omega))$. By (i), we have $Av\in T(\infty_J;A(\Omega))$ for every $v\in T(\infty_I;\Omega)$, so
\[
\langle A^\top w,v\rangle=\langle w,Av\rangle\le 0 \quad\text{for all } v\in T(\infty_I;\Omega).
\]
Hence $A^\top w\in N(\infty_I;\Omega)$.

\smallskip
\noindent
(iii) Assume in addition that $m=n$, $A$ is invertible, and \eqref{eq: converse compatibility infinity} holds. Applying (i) and (ii) to the set $A(\Omega)$ and the linear map $A^{-1}$, we have
    \[
    A^{-1}\big(T(\infty_J;A(\Omega))\big)\subset T(\infty_I;\Omega) \quad\text{and}\quad (A^{-1})^\top N(\infty_I;\Omega) \subset N(\infty_J;A(\Omega)).
    \]
This implies that $T(\infty_J;A(\Omega))\subset A\big(T(\infty_I;\Omega)\big)$ and $N(\infty_I;\Omega)\subset A^\top N(\infty_J;A(\Omega))$. Combining with (i) and (ii), we obtain the desired equalities.
\end{proof}

\begin{remark} \label{rem: full-coord linear image}
We distinguish two situations for \eqref{eq: compatibility infinity} and \eqref{eq: converse compatibility infinity}.

\smallskip
\noindent\emph{Full coordinates with $A$ invertible.} Suppose that $m=n$, $I=J=\{1,\dots,n\}$, and $A$ is invertible. Since $A$ is a linear bijection of $\R^n$, both $A$ and $A^{-1}$ are continuous, so there exist constants $c,C>0$ with
\[
c\|x\|\le\|Ax\|\le C\|x\|\qquad \;\ \text{for all}~ x\in\R^n.
\]
Consequently $\|Ax_k\|\to\infty$ if and only if $\|x_k\|\to\infty$, and both \eqref{eq: compatibility infinity} and \eqref{eq: converse compatibility infinity} hold automatically. This is the setting in which Proposition~\ref{prop: chain rule linear} below applies.

\smallskip
\noindent\emph{Partial coordinates.} For partial $I$ or $J$, the conditions can fail even when $A$ is the identity. Take $n=m=2$, $A=\mathrm{Id}$, $I=\{1\}$, $J=\{2\}$, and $x_k=(0,k)$. Then
\[
\|\pi_J(Ax_k)\|=\|\pi_J(x_k)\|=k\to\infty,\qquad \|\pi_I(x_k)\|=0,
\]
which violates \eqref{eq: compatibility infinity}. Thus the conditions are nontrivial.
\end{remark}

The next example shows that the tangent image rule in Proposition~\ref{prop: normal linear image} may fail in the nonconvex case, so convexity of $\Omega$ is essential.

\begin{example}\label{ex: linear-image-nonconvex}
Consider the following nonconvex sets
    \[
    \Omega:=\Omega_1\cup\Omega_2\subset\R^2,
    \qquad
    \Omega_1:=\bigcup_{n=1}^\infty \bigl((-\infty,-n]\times\{n\}\bigr),
    \qquad
    \Omega_2:=\bigcup_{n=1}^\infty \left(\left[n,n+\dfrac1n\right]\times\{0\}\right).
    \]
Fix $I=\{2\}$, $J=\{1\}$, and define a linear map $A:\R^2\to\R$ by $A(x_1,x_2)=x_1$.

Set $v:=(-1,0)$. We claim that $v\in T(\infty_I;\Omega)$.
Indeed, let $x_k=(a_k,n_k)\in\Omega$ satisfy $n_k\to\infty$, and let $t_k\searrow0$. It is clear that $x_k\in\Omega_1$, hence $a_k\le -n_k$ for all $k \in \N$. Taking $v_k:=v$, we have $x_k+t_kv_k=(a_k-t_k,n_k)\in\Omega_1\subset\Omega$ for all $k \in \N$. Thus $v\in T(\infty_I;\Omega)$.

On the other hand,
    \[
    A(\Omega) = (-\infty,-1] \cup \bigcup_{n=1}^\infty \left[n,n+\dfrac1n\right].
    \]
We show that $Av=-1\notin T(\infty_J;A(\Omega))$.
To see this, set $y_k:=k+\frac1{k^3}\in A(\Omega)$ and $t_k:=\frac 1k \searrow 0$. Suppose to the contrary that $-1\in T(\infty_J;A(\Omega))$. Then there exists a sequence $\{u_k\}\subset\R$ such that $u_k\to -1$ and $y_k+t_ku_k\in A(\Omega)$ for all $k$. For all large $k$, we have $u_k\in(-2,0)$, hence
    \[
    y_k+t_ku_k = k+\frac1{k^3}+\frac{u_k}{k}\in \left(k-\dfrac12,k+\dfrac12\right).
    \]
Moreover, for all large $k$,
    \[
    A(\Omega)\cap \left(k-\dfrac12,k+\dfrac12\right)=\left[k,k+\dfrac1k\right],
    \]
so $y_k+t_ku_k\ge k$. This means that $u_k\ge -\frac1{k^2}$ for all large $k$, which contradicts the fact that $u_k\to -1$.
Thus $Av\notin T(\infty_J;A(\Omega))$, and consequently
    \(
    A\bigl(T(\infty_I;\Omega)\bigr)\not\subset T(\infty_J;A(\Omega)).
    \)
\end{example}

\section{Subdifferential at Infinity} \label{sec: subdifferential at infinity}
In this section we study the subdifferential and singular subdifferential at infinity for proper, lower semicontinuous, convex functions, introduced in \cite{tung23a, tung23b}. Working through the normal cone at infinity to the epigraph, we derive their basic properties and calculus rules and relate them to classical convex-analytic objects.

Throughout, let $f:\R^n \to \R \cup \{\infty\}$ be a proper, lower semicontinuous, convex function with $\dom f$ unbounded. Fix $I=\{1,\ldots,n\}$, and let $\pi: \R^n\times\R \to \R^n$ be the projection $\pi(x,y)=x$. For $x\in\dom f$, recall that the (convex) subdifferential and the singular subdifferential of $f$ are defined by
\begin{align*}
    \partial f(x) := \left\{u \in\R^n \ \mid\ (u,-1) \in N\big((x,f(x));\epi f\big)\right\}, \\
    \partial^{\infty} f(x) := \left\{u \in\R^n \ \mid\ (u,0) \in N\big((x,f(x));\epi f\big)\right\}.
\end{align*}

We recall their counterparts at infinity as follows.

\begin{definition}\label{def: subdiff infinity}
Let $f:\R^n \to \R \cup \{\infty\}$ be a proper, lower semicontinuous, convex function with $\dom f$ unbounded.
\begin{enumerate}[\rm (i)]
    \item The \emph{subdifferential of $f$ at infinity} is
    \[
    \partial f(\infty) := \left\{u \in\R^n \ \middle|\ (u,-1) \in N(\infty_I;\epi f)\right\}.
    \]

    \item The \emph{singular subdifferential of $f$ at infinity} is
    \[
    \partial^{\infty} f(\infty) := \left\{u \in\R^n \ \middle|\ (u,0) \in N(\infty_I;\epi f)\right\}.
    \]

    \item The \emph{limiting subdifferential of $f$ at infinity} is (see \cite{tung23b})
    \[
    \partial^{M} f(\infty) := \left\{u \in\R^n \ \middle|\ (u,-1) \in N^M(\infty_I;\epi f)\right\}.
    \]
\end{enumerate}
\end{definition}

\begin{example} \label{ex: indicator}
Let \(\Omega \subset \R^n\) be a nonempty closed convex set, and consider its indicator function \(\delta_\Omega\). Then
    \[
    \partial \delta_\Omega(\infty) = \partial^{\infty} \delta_\Omega(\infty) = N(\infty;\Omega).
    \]
Indeed, since $\epi\delta_\Omega=\Omega\times \R_+$, we have
    \[
    T(\infty_I;\epi\delta_\Omega) = T(\infty;\Omega) \times \R_+,
    \qquad
    N(\infty_I;\epi\delta_\Omega)=N(\infty;\Omega)\times (-\R_+).
    \]
Therefore,
    \[
    \partial \delta_\Omega(\infty) = \bigl\{u\in\R^n\mid (u,-1)\in N(\infty_I;\epi\delta_\Omega)\bigr\} = N(\infty;\Omega).
\]
The statement for $\partial^{\infty} \delta_\Omega(\infty)$ follows in a similar way.
\end{example}

\subsection{Basic Properties}
We collect the basic properties of $\partial f(\infty)$ and $\partial^{\infty} f(\infty)$.

\begin{proposition} \label{prop: basic subdiff at infinity}
Let $f:\R^n \to \R \cup \{\infty\}$ be a proper, lower semicontinuous, convex function with $\dom f$ unbounded. Then:
\begin{enumerate}[\rm (i)]
    \item $\partial f(\infty)$ is a closed convex set, and $\partial^{\infty} f(\infty)$ is a closed convex cone;
    \item the multifunction $\partial f(\cdot)$ is closed at infinity in the sense that
    \[
    x_k \to \infty,\ \ u_k \in \partial f(x_k),\ \ u_k \to u \quad \Longrightarrow \quad u \in \partial f(\infty);
    \]
    \item $\displaystyle \partial^M f(\infty) = \Limsup_{x \to \infty} \partial f(x) \subset \partial f(\infty)$.
\end{enumerate}
\end{proposition}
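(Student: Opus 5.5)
Everything here reduces to the normal-cone results of Section~\ref{sec: tangent and normal cones at infinity}, applied to the closed convex set $\epi f\subset\R^{n+1}$ with the index set $\{1,\dots,n\}$; this is exactly $\pi(x,y)=x$, so $\|\pi(x,y)\|\to\infty$ means $\|x\|\to\infty$. Since $\dom f$ is unbounded, $\pi(\epi f)=\dom f$ is unbounded, and the standing assumption of that section holds.

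For (i), Proposition~\ref{prop: basic properties}(i) shows that $N(\infty_I;\epi f)$ is a closed convex cone. The set $\partial f(\infty)$ is the preimage of $N(\infty_I;\epi f)\cap(\R^n\times\{-1\})$ under the affine map $u\mapsto(u,-1)$, so it is closed and convex. Likewise $\partial^\infty f(\infty)$ is the preimage of $N(\infty_I;\epi f)$ under the linear map $u\mapsto(u,0)$, and the preimage of a closed convex cone under a linear map is again a closed convex cone.

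For (ii), take $x_k\in\dom f$ with $\|x_k\|\to\infty$, $u_k\in\partial f(x_k)$ and $u_k\to u$. Then $(u_k,-1)\in N((x_k,f(x_k));\epi f)$, the points $(x_k,f(x_k))$ lie in $\epi f$, and $\|\pi(x_k,f(x_k))\|=\|x_k\|\to\infty$. Since $(u_k,-1)\to(u,-1)$, Proposition~\ref{prop: basic properties}(iii) gives $(u,-1)\in N(\infty_I;\epi f)$, which means $u\in\partial f(\infty)$.

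For (iii), the inclusion $\partial^M f(\infty)\subset\partial f(\infty)$ follows at once from Proposition~\ref{prop: cones at infinity}(ii). It remains to prove $\partial^M f(\infty)=\Limsup_{x\to\infty}\partial f(x)$, and this is the main step.

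The inclusion $\supset$ is the same argument as in (ii): the sequence $(u_k,-1)\in N((x_k,f(x_k));\epi f)$ witnesses $(u,-1)\in N^M(\infty_I;\epi f)$.

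For $\subset$, let $(u,-1)\in N^M(\infty_I;\epi f)$. Then there are points $(x_k,y_k)\in\epi f$ with $\|x_k\|\to\infty$ and normals $(u_k,\lambda_k)\in N((x_k,y_k);\epi f)$ with $(u_k,\lambda_k)\to(u,-1)$. For large $k$ we have $\lambda_k<0$. A normal to the epigraph with negative last component forces $y_k=f(x_k)$: otherwise the vertical ray $(x_k,y_k+t)$, $t\ge -(y_k-f(x_k))$, stays in $\epi f$, and the normal inequality would give $\lambda_k=0$. Rescaling to $(u_k/|\lambda_k|,-1)\in N((x_k,f(x_k));\epi f)$ gives $u_k/|\lambda_k|\in\partial f(x_k)$, and $u_k/|\lambda_k|\to u$. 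Hence $u\in\Limsup_{x\to\infty}\partial f(x)$, completing the equality.

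The only point needing care is this rescaling step, namely that the normal at $(x_k,y_k)$ sits at a graph point.
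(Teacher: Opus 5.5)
Your proof is correct. Parts (i) and (ii), and the inclusion in (iii), follow the paper's route. Everything is transferred to $N(\infty_I;\epi f)$ via Proposition~\ref{prop: basic properties}. The paper's argument for the inclusion in (iii) is the chain $(u,-1)\in\Limsup_{z\to\infty}N(z;\epi f)\subset N(\infty_I;\epi f)$, which is the same fact $N^M\subset N$ that you take from Proposition~\ref{prop: cones at infinity}(ii).

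The real difference is the equality $\partial^M f(\infty)=\Limsup_{x\to\infty}\partial f(x)$. The paper does not prove it; it cites \cite[Proposition~4.4]{tung23b}. You prove it directly. The key point is that the definition of $N^M(\infty_I;\epi f)$ allows base points $(x_k,y_k)$ off the graph, with $y_k$ possibly unbounded. You handle this with two observations:
\begin{enumerate}[(a)]
    \item a normal $(u_k,\lambda_k)$ to $\epi f$ at $(x_k,y_k)$ with $\lambda_k<0$ forces $y_k=f(x_k)$, because the vertical segment through a point strictly above the graph lies in $\epi f$;
    \item rescaling then gives $u_k/|\lambda_k|\in\partial f(x_k)$ with $u_k/|\lambda_k|\to u$.
\end{enumerate}

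What each route buys: yours makes the proposition self-contained and uses only finite-point convex normal cones. The paper instead relies on a result from the general nonsmooth framework of \cite{tung23b}, which covers more than is needed here.
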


\begin{proof}
    (i)--(ii) follow from the corresponding closedness and convexity properties of the normal cone at infinity (see Proposition~\ref{prop: basic properties}). 
Here, since $I=\{1,\ldots,n\}$, only $x$ escapes to infinity, while $r$ may remain bounded.

(iii) The first equality follows from Proposition 4.4 in \cite{tung23b}. For the latter inclusion, take any $u \in \Limsup_{x \to \infty} \partial f(x)$. Then there exist sequences $\{x_k\}\subset\dom f$ and $\{u_k\}\subset\R^n$ with $\|x_k\|\to\infty$ and $u_k\to u$ such that $u_k\in\partial f(x_k)$ for all $k\in\N$. In particular,
    \[
    (u_k,-1)\in N\big((x_k,f(x_k));\epi f\big).
    \]
Passing to the limit and using $\|\pi(x_k,f(x_k))\|=\|x_k\|\to\infty$, we obtain
    \[
    (u,-1)\in \Limsup_{z\to\infty} N(z;\epi f) \subset N(\infty_I;\epi f).
    \]
Hence $u\in\partial f(\infty)$.
\end{proof}

\begin{proposition} \label{prop: singular and subdifferential}
Let $f:\R^n \to \R \cup \{\infty\}$ be a proper, lower semicontinuous, convex function with $\dom f$ unbounded. Consider the sets
    \[
    A:=\Limsup_{x \to \infty} \partial^\infty f(x), \quad
    B:=\partial^\infty f(\infty), \quad
    C:=\Limsup_{t \searrow 0} t\,\partial f(\infty), \quad
    D:=\Limsup_{x \to \infty, \;t \searrow 0} t\,\partial f(x).
    \]
We have the following relations:
    \[
    A \subset D \subset B \quad \text{and} \quad C \subset B.
    \]
If, in addition, $\partial f(\infty) \neq \emptyset$, then $C = B$.
\end{proposition}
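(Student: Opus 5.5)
The plan is to work entirely with the cone $N(\infty_I;\operatorname{epi} f)$ and the two facts from Proposition~\ref{prop: basic properties}: this cone is a closed convex cone, and it is closed at infinity. The closedness at infinity is applied to sequences $(x_k,r_k)\in\operatorname{epi} f$ with $\|\pi(x_k,r_k)\|=\|x_k\|\to\infty$, exactly as in the proof of Proposition~\ref{prop: basic subdiff at infinity}(iii). I would prove the inclusions $D\subset B$, $C\subset B$, $A\subset D$ in that order, and then prove $B\subset C$ under $\partial f(\infty)\neq\emptyset$.

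\emph{$D\subset B$.} Take $u\in D$. Then there are $x_k\to\infty$, $t_k\searrow0$ and $w_k\in\partial f(x_k)$ with $t_kw_k\to u$. By definition $(w_k,-1)\in N((x_k,f(x_k));\operatorname{epi} f)$. Since normal cones are cones, also $(t_kw_k,-t_k)\in N((x_k,f(x_k));\operatorname{epi} f)$, and this sequence converges to $(u,0)$. Proposition~\ref{prop: basic properties}(iii) then gives $(u,0)\in N(\infty_I;\operatorname{epi} f)$, i.e.\ $u\in B$.

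\emph{$C\subset B$.} This runs the same way, but inside the cone at infinity. If $t_kw_k\to u$ with $w_k\in\partial f(\infty)$, then $(t_kw_k,-t_k)\in N(\infty_I;\operatorname{epi} f)$ because it is a cone. Its closedness (Proposition~\ref{prop: basic properties}(i)) yields $(u,0)\in N(\infty_I;\operatorname{epi} f)$.

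\emph{$B\subset C$ when $\partial f(\infty)\neq\emptyset$.} Fix $w\in\partial f(\infty)$ and $u\in B$. For $t>0$, convexity of the cone gives $(w,-1)+(u/t,0)\in N(\infty_I;\operatorname{epi} f)$, so $w+u/t\in\partial f(\infty)$. Hence $t(w+u/t)=tw+u\in t\,\partial f(\infty)$, and these points tend to $u$ as $t\searrow0$. Therefore $u\in C$.

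\emph{$A\subset D$ (main obstacle).} Let $u_k\in\partial^\infty f(x_k)$ with $x_k\to\infty$ and $u_k\to u$. If $\partial f(x_k)\neq\emptyset$, I use the standard fact that $\partial^\infty f(x_k)=N(x_k;\operatorname{dom} f)$ is the recession cone of $\partial f(x_k)$. Pick $g_k\in\partial f(x_k)$ and set $w_k:=g_k+u_k/t_k\in\partial f(x_k)$, with $t_k\le 1/k$ and $t_k\|g_k\|<1/k$. Then $t_kw_k=t_kg_k+u_k\to u$, so $u\in D$.

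The difficulty is boundary points $x_k$ where $\partial f(x_k)=\emptyset$. There I would approximate the horizontal normal $(u_k,0)$ to $\operatorname{epi} f$ by nonhorizontal normals at nearby points. The classical convex result that horizontal normals are limits of scaled nonhorizontal ones (Rockafellar's representation of $\partial^\infty f$, or Br{\o}ndsted--Rockafellar applied to $\operatorname{epi} f$) gives points $y_k\in\operatorname{dom}\partial f$, scalars $s_k\in(0,1/k)$ and $v_k\in\partial f(y_k)$ such that $\|y_k-x_k\|<1$ and $\|s_kv_k-u_k\|<1/k$. Then $\|y_k\|\to\infty$ and $s_kv_k\to u$, so $u\in D$.

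I expect verifying this approximation step cleanly, with the closeness $\|y_k-x_k\|<1$ preserved, to be the only delicate point. Everything else is formal manipulation with the closed convex cone $N(\infty_I;\operatorname{epi} f)$.
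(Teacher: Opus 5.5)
Your proposal is correct and follows the paper's proof essentially step for step. The paper proves $D\subset B$ through closedness at infinity of $N(\cdot;\epi f)$, $C\subset B$ through closedness of $N(\infty_I;\epi f)$, and $B\subset C$ by adding $(u/t,0)$ to $(w,-1)$. It handles $A\subset D$ by the same case split on whether $\partial f(x_k)$ is empty, and in the empty case it also appeals to an external approximation result (Lemma~2.6 of \cite{tung23b}), which is the classical fact you invoke; your bound $\|y_k-x_k\|<1$ is enough to keep $\|y_k\|\to\infty$.
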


\begin{proof}
\emph{Step 1: $A\subset D$.} Let $u \in A$. Then there exist sequences $\{x_k\}\subset\dom f$ and $\{u_k\}\subset\R^n$ such that $\|x_k\|\to\infty$, $u_k\to u$, and $u_k\in\partial^\infty f(x_k)$ for all $k\in\N$. If $\partial f(x_k)=\emptyset$ for infinitely many $k$, then by \cite[Lemma~2.6]{tung23b} there exist $x_k'\in\dom f$, $u_k'\in\partial f(x_k')$, and $t_k'\searrow 0$ such that $\|x_k'-x_k\|\to 0$ and $t_k'u_k'\to u$. Since $\|x_k'-x_k\|\to 0$ gives us $\|x_k'\|\to\infty$, we have $u\in D$. We may therefore assume that $\partial f(x_k)\neq\emptyset$ for all $k$. For each $k$, pick $p_k \in \partial f(x_k)$. Then
    \[
    (u_k,0)\in N\big((x_k,f(x_k));\epi f\big),\qquad (p_k,-1) \in N\big((x_k,f(x_k));\epi f\big).
    \]
Since $N\big((x_k,f(x_k));\epi f\big)$ is a convex cone, setting $t_k := \min\left\{\frac{1}{k}, \frac{1}{k(1 + \|p_k\|)}\right\} > 0$ gives us
    \[
    \Big(\frac{u_k}{t_k}+p_k,\,-1\Big) = \frac{1}{t_k}(u_k,0)+(p_k,-1) \in N\big((x_k,f(x_k));\epi f\big).
    \]
Hence $w_k := u_k/t_k+p_k \in \partial f(x_k)$. Moreover, we have
    \[
    \|t_kw_k-u\| \le \|t_k w_k-u_k\|+\|u_k-u\| = t_k\|p_k\|+\|u_k-u\| \le \frac{1}{k}+\|u_k-u\| \to 0,
    \]
which implies that $t_k w_k \to u$. So $u\in D$, and thus $A \subset D$.

\smallskip
\emph{Step 2: $D \subset B$.} Take $u\in D$. Then there exist sequences $\{x_k\}\subset\dom f$, $\{t_k\}\subset\R_+$, and $\{u_k\}\subset\R^n$ such that $\|x_k\|\to\infty$, $t_k\searrow 0$, $u_k\in\partial f(x_k)$ for all $k\in\N$, and $t_k u_k\to u$. Since $(u_k,-1)\in N\big((x_k,f(x_k));\epi f\big)$, we have $(t_k u_k,-t_k)\in N\big((x_k,f(x_k));\epi f\big)$. Passing to the limit, we get $(u,0)\in N(\infty_I;\epi f)$. Equivalently, $u\in B$.

\smallskip
\emph{Step 3: $C \subset B$.} Take $u\in C$. Then there exist sequences $\{t_k\}\subset\R_+$ and $\{v_k\}\subset\partial f(\infty)$ with $t_k\searrow 0$ and $t_k v_k\to u$. Since $(v_k,-1)\in N(\infty_I;\epi f)$, we have $(t_k v_k,-t_k)\in N(\infty_I;\epi f)$. Passing to the limit, we get $(u,0)\in N(\infty_I;\epi f)$, so $u\in B$.

\emph{Step 4: $B \subset C$.} Assume that $\partial f(\infty)\neq\emptyset$ and take $u\in B$. Then $(u,0)\in N(\infty_I;\epi f)$. Choose $w\in \partial f(\infty)$. For every $t>0$, we have
\[
\Big(\frac{u}{t}+w,-1\Big)=\frac1t(u,0)+(w,-1)\in N(\infty_I;\epi f),
\]
so $v:=u/t+w\in\partial f(\infty)$. Multiplying by $t$ gives us $tv=u+tw\to u$ as $t\searrow 0$, hence $u\in C$ and $B\subset C$.
\end{proof}

We close this subsection with two examples showing that the inclusions in Proposition~\ref{prop: singular and subdifferential} can be strict. In general, there is no inclusion relation between $C$ and $A$, nor between $C$ and $D$.

\begin{example}
Consider a function $f:\R\to\R\cup\{\infty\}$ defined by $f(x)=x^2$ if $x\ge 0$, and $f(x)=\infty$ if $x<0$.
Then
    \[
    A=\Limsup_{|x| \to \infty}\partial^\infty f(x)=\{0\},
    \qquad
    D=\Limsup_{|x| \to \infty,\ t\searrow0} t\,\partial f(x)=\R_+.
    \]
Direct computation gives us
    \[
    N(\infty_I;\epi f)=\R_+\times\{0\},
    \]
and hence
    \[
    B=\partial^\infty f(\infty)=\R_+,
    \qquad
    C=\Limsup_{t\searrow0} t\,\partial f(\infty)=\varnothing.
    \]
In particular, \(B\neq C\), \(A\not\subset C\), and \(D\not\subset C\).
\end{example}

\begin{example}
Consider a set $\Omega:=\{(x_1,x_2)\in\R^2\mid x_2\ge e^{x_1}\}$ and a function $f:=\delta_\Omega$.
Then
    \[
    B=\partial^\infty f(\infty)=\R_+\times(-\R_+),
    \qquad
    C=\Limsup_{t\searrow0} t\,\partial f(\infty)
    =\R_+\times(-\R_+).
    \]
On the other hand,
    \[
    A=\Limsup_{|x|\to\infty}\partial^\infty f(x)
    =
    (\R_+\times\{0\})\cup(\{0\}\times(-\R_+)),
    \]
and
    \[
    D=\Limsup_{|x|\to\infty,\ t\searrow0} t\,\partial f(x)=A.
    \]
Thus \(C\not\subset A\) and \(C\not\subset D\).
\end{example}

\subsection{Geometric Characterizations}
We now characterize $\partial^\infty f(\infty)$ and $\partial f(\infty)$ in terms of classical convex-analytic objects: the normal cone $N(\infty;\dom f)$, the domain of the recession function $\dom f^\infty$, and the domain of the conjugate $\dom f^*$.

\begin{proposition} \label{prop: geometry of singular}
Let $f:\R^n \to \R \cup \{\infty\}$ be a proper, lower semicontinuous, convex function with $\dom f$ unbounded. Then
    \[
    N(\infty;\dom f) \subset \partial^\infty f(\infty) \subset (\dom f^\infty)^\circ.
    \]
Moreover, the first inclusion is an equality provided the following condition is satisfied:
    \begin{equation} \label{eq: dom to epi}
    \;\ \text{for all}~ w \in T(\infty;\dom f),\ \exists r \in \R\ \text{such that}\ (w,r)\in T(\infty_I;\epi f).
    \end{equation}
\end{proposition}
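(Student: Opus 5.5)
The plan is to work entirely through tangent cones at infinity and polarity, using Proposition~\ref{prop: basic properties}(ii). Throughout, write $E:=\epi f$ and $T_E:=T(\infty_I;E)$ with $I=\{1,\dots,n\}$. Then $\partial^\infty f(\infty)=\{u\mid (u,0)\in T_E^\circ\}$. The three claims reduce to the following statements about $T_E$:
\begin{enumerate}[(a)]
\item every $(w,r)\in T_E$ has $w\in T(\infty;\dom f)$;
\item $\dom f^\infty\times\R\subset T_E$, or at least $(d,f^\infty(d))\in T_E$ for $d\in\dom f^\infty$;
\item under \eqref{eq: dom to epi}, the projection of $T_E$ onto the first $n$ coordinates covers $T(\infty;\dom f)$.
\end{enumerate}

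\emph{First inclusion.} Take $v\in N(\infty;\dom f)$; I must show $\langle (v,0),(w,r)\rangle=\langle v,w\rangle\le 0$ for all $(w,r)\in T_E$, so it suffices to prove (a). Fix $(w,r)\in T_E$, sequences $x_k\in\dom f$ with $\|x_k\|\to\infty$, and $t_k\searrow0$. Apply the definition of $T_E$ to the points $(x_k,f(x_k))\in E$; their $\pi$-projections tend to infinity. This yields $(w_k,r_k)\to(w,r)$ with $(x_k+t_kw_k,\,f(x_k)+t_kr_k)\in E$, so $x_k+t_kw_k\in\dom f$ and hence $w\in T(\infty;\dom f)$. \emph{Equality under \eqref{eq: dom to epi}.} Conversely, let $u\in\partial^\infty f(\infty)$ and $w\in T(\infty;\dom f)$. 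Pick $r$ with $(w,r)\in T_E$; then $\langle u,w\rangle=\langle (u,0),(w,r)\rangle\le0$, so $u\in N(\infty;\dom f)$.

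\emph{Second inclusion.} Take $u\in\partial^\infty f(\infty)$ and $d\in\dom f^\infty$. Since $f^\infty$ is the support function of $\epi f$'s recession cone in the sense that $E^\infty=\epi f^\infty$ (standard, \cite[Theorem~8.5]{rockafellar-convex}), we have $(d,f^\infty(d))\in E^\infty$. By Proposition~\ref{prop: recession inclusion} applied to the closed convex set $E$ (closed by lower semicontinuity), $E^\infty\subset T_E$. Hence $\langle u,d\rangle=\langle (u,0),(d,f^\infty(d))\rangle\le0$, so $u\in(\dom f^\infty)^\circ$. Polarity of the cone $\dom f^\infty$ is meant in the cone sense; it is a convex cone since $f^\infty$ is sublinear.

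The only step needing care is verifying that Proposition~\ref{prop: recession inclusion} legitimately applies with the projection $\pi$ onto $x$-coordinates, i.e.\ that ``$\infty_I$'' for $E\subset\R^{n+1}$ uses $I=\{1,\dots,n\}$, and that $\pi(E)=\dom f$ is unbounded. Both are given. I expect the main subtlety to be in (a): the definition of $T_E$ quantifies over all sequences in $E$ escaping in the $x$-coordinates, and choosing the specific points $(x_k,f(x_k))$ is what makes the argument go through.
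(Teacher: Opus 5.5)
Your proof is correct and follows the same route as the paper. Polarity against the inclusion $T(\infty_I;\epi f)\subset T(\infty;\dom f)\times\R$ gives the first inclusion and the equality under \eqref{eq: dom to epi}, and $(\epi f)^\infty=\epi f^\infty\subset T(\infty_I;\epi f)$ from Proposition~\ref{prop: recession inclusion} gives the second. Only the weaker form of your item (b) is true, since $\dom f^\infty\times\R\subset T(\infty_I;\epi f)$ already fails for $f\equiv 0$, but the weaker form is all you use.

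Your direct verification of (a) at the lifted points $(x_k,f(x_k))$ is a sounder justification than the paper's. The paper obtains that inclusion from $\epi f\subset\dom f\times\R$, but tangent cones at infinity are not monotone under inclusion in general: the ray $\R_+\times\{0\}$ inside $\R^2_+$ has tangent cone at infinity $\R\times\{0\}\not\subset\R^2_+=T(\infty;\R^2_+)$. Your argument also avoids passing through finite-point tangent cones of the possibly nonclosed set $\dom f$.
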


\begin{proof}
Since $\epi f\subset \dom f\times \R$, we have
    \[
    T(\infty_I;\epi f)\subset T(\infty_I;\dom f\times \R).
    \]
Moreover, for every $(x,y)\in \dom f\times \R$, we have $T\big((x,y);\dom f\times \R\big)=T(x;\dom f)\times \R$. Hence
    \[
    T(\infty_I;\dom f\times \R)=T(\infty;\dom f)\times \R.
    \]
Therefore,
    \[
    T(\infty_I;\epi f)\subset T(\infty;\dom f)\times \R.
    \]
Taking polars and using Proposition~\ref{prop: basic properties}(ii), we obtain
    \[
    N(\infty;\dom f)\times\{0\} = \big(T(\infty;\dom f)\times\R\big)^\circ \subset N(\infty_I;\epi f).
    \]
Thus $N(\infty;\dom f)\subset \partial^\infty f(\infty)$.

For the second inclusion, since $\epi f$ is a closed and convex set, it follows from \cite[Theorem~8.5]{rockafellar-convex} that $(\epi f)^\infty=\epi f^\infty$. Applying Proposition~\ref{prop: recession inclusion} to $\Omega=\epi f$, we have $N(\infty_I;\epi f)\subset (\epi f^\infty)^\circ$. A direct computation yields that
    \begin{align*}
    \partial^\infty f(\infty) &= \{u\in\R^n\mid (u,0)\in N(\infty_I;\epi f)\} \\
    &\subset \{u\in\R^n\mid (u,0)\in (\epi f^\infty)^\circ\} \\
    &= \{u\in\R^n\mid \langle u, w\rangle \le 0 \text{ for all } w\in\dom f^\infty\} = (\dom f^\infty)^\circ.
    \end{align*}

For the equality of the first inclusion, assume that \eqref{eq: dom to epi} is satisfied. Take any $u\in \partial^\infty f(\infty)$. Then $(u,0) \in N(\infty_I;\epi f)$, and
    \[
    \langle u,w\rangle \le 0 \quad \;\ \text{for all}~ (w,r)\in T(\infty_I;\epi f).
    \]
Now fix an element $w \in T(\infty;\dom f)$. By \eqref{eq: dom to epi}, there exists $r \in \R$ such that $(w,r)\in T(\infty_I;\epi f)$. Hence $\langle u,w \rangle \le 0$, and therefore $u \in N(\infty;\dom f)$.
\end{proof}

Condition~\eqref{eq: dom to epi} is essential for the equality $\partial^\infty f(\infty)=N(\infty;\dom f)$; the next example shows that the equality can fail when this condition is not satisfied.

\begin{example} \label{ex: exp dom-to-epi fails}
Consider a function $f:\R\to\R$ defined by $f(x)=e^x$. Then $\dom f=\R$, so
    \[
    T(\infty;\dom f)=\R,\qquad N(\infty;\dom f)=\{0\}.
    \]
A direct computation gives us
    \[
    T(\infty_I;\epi f)=\R_-\times\R_+,
    \qquad
    N(\infty_I;\epi f)=\R_+\times\R_-.
    \]
Hence
    \[
    \partial^\infty f(\infty)=\R_+\supsetneq N(\infty;\dom f)=\{0\}, \quad \partial f(\infty)=\R_+.
    \]
Condition~\eqref{eq: dom to epi} fails here because if $w>0$, then $w\in T(\infty;\dom f)$, but no $r\in\R$ satisfies $(w,r)\in T(\infty_I;\epi f)$.
\end{example}

\begin{theorem} \label{prop: geometry of subdifferential}
Let $f:\R^n\to\R\cup\{\infty\}$ be a proper, lower semicontinuous, convex function with $\dom f$ unbounded. Then
    \[
    \partial f(\infty) \subset \cl(\dom f^*).
    \]
Moreover, if $f:\R^n\to\R$ is sublinear, then $T(\infty_I;\epi f)=\epi f$, the set $\dom f^*$ is closed, and equality holds:
    \[
    \partial f(\infty)=\partial f(0)=\dom f^*.
    \]
\end{theorem}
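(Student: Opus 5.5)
For the inclusion $\partial f(\infty)\subset\cl(\dom f^*)$, I would argue through the recession geometry of the epigraph. Take $u\in\partial f(\infty)$, so $(u,-1)\in N(\infty_I;\epi f)$. By Proposition~\ref{prop: recession inclusion} applied to $\Omega=\epi f$, together with $(\epi f)^\infty=\epi f^\infty$ \cite[Theorem~8.5]{rockafellar-convex}, we get $(u,-1)\in(\epi f^\infty)^\circ$. Testing this against the points $(d,f^\infty(d))$ for $d\in\dom f^\infty$ gives
\[
\langle u,d\rangle\le f^\infty(d)=\sigma_{\dom f^*}(d)=\sigma_{\cl(\dom f^*)}(d)\quad\text{for all } d\in\R^n.
\]
For $d\notin\dom f^\infty$ the inequality is trivial. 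Since $\cl(\dom f^*)$ is a nonempty closed convex set ($f$ is proper, lsc and convex, so $f^*$ is proper), the separation theorem then yields $u\in\cl(\dom f^*)$. Should the inequality fail, a strictly separating $d$ would contradict it.

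For the sublinear case, let $f:\R^n\to\R$ be sublinear. Then $\epi f$ is a closed convex cone, so $(\epi f)^\infty=\epi f$, and Proposition~\ref{prop: recession inclusion} gives $\epi f\subset T(\infty_I;\epi f)$. For the reverse inclusion I would use Proposition~\ref{prop: cones at infinity}(i): for any point $z$ of a convex cone $K$ we have $T(z;K)=\cl\cone(K-z)=\cl(K+\R z)$. Rather than estimate this directly, I would choose a convenient sequence. Take $x_k=kx_0$ with any fixed $x_0\neq0$, and set $z_k=(kx_0,kf(x_0))$, so that $T(z_k;\epi f)=T(z_1;\epi f)$ is constant. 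The inner limit is then contained in $T(z_1;\epi f)$ for every $x_0\neq0$, hence in the intersection of these cones over all $x_0\neq0$.

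I then need $\bigcap_{x_0\neq0}T((x_0,f(x_0));\epi f)\subset\epi f$. Let $(w,r)$ lie in the intersection. The normal cone at $(x_0,f(x_0))$ contains $(v,-1)$ for every $v\in\partial f(x_0)$; these $v$ exist because $f$ is finite and convex. Using $\langle v,x_0\rangle=f(x_0)$ for such $v$ (by sublinearity), we get $\langle v,w\rangle\le r$. Every $v\in\partial f(0)$ is a subgradient at some $x_0\neq0$: these are the exposed points, whose closed convex hull is $\partial f(0)$ (Straszewicz). It follows that $f(w)=\sup_{v\in\partial f(0)}\langle v,w\rangle\le r$.

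I expect this last step to be the main obstacle. An alternative is to pick $x_0=-w$ or $x_0$ along which $f$ is linear. The simplest route is probably $x_0=w$ when $w\neq0$, where $(w,f(w))\in\epi f$: for any $v\in\partial f(w)$, tangency at $(\lambda w,\lambda f(w))$ still gives only $\langle v,w\rangle\le r$, i.e.\ $f(w)\le r$ directly, since $\langle v,w\rangle=f(w)$. So choosing $x_0=w$ suffices when $w\neq0$. When $w=0$, using $x_0\neq0$ with $v\in\partial f(x_0)$ gives $0\le r=r-f(0)$. This settles $T(\infty_I;\epi f)=\epi f$.

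Polarity then gives $N(\infty_I;\epi f)=(\epi f)^\circ=N((0,0);\epi f)$, so $\partial f(\infty)=\partial f(0)$. Finally, for a finite sublinear $f$ we have $f^*=\delta_{\partial f(0)}$ with $\partial f(0)$ compact, so $\dom f^*=\partial f(0)$ is closed and the equalities follow.
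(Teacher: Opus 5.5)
Your proposal is correct and follows essentially the paper's proof. Both send the points $k(w,f(w))$ to infinity, trap the approximating tangent vectors in the half-space $\{(x,r)\mid \langle \bar c,x\rangle\le r\}$ with $\bar c\in\partial f(w)$ (the normal $(\bar c,-1)$), handle $w=0$ through an arbitrary nonzero direction, and conclude by polarity. The differences are minor: you get $\epi f\subset T(\infty_I;\epi f)$ from the recession inclusion, so $\partial f(\infty)=\partial f(0)$ follows without the first part, and you should delete the Straszewicz detour, because its claim that every $v\in\partial f(0)$ is a subgradient at some $x_0\neq 0$ is false (take $v=0$ for $f=|\cdot|$), while the choice $x_0=w$ already finishes the argument.
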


\begin{proof}
We first prove the inclusion. Since $\epi f$ is a closed and convex set, it follows from \cite[Theorem~8.5]{rockafellar-convex} that $(\epi f)^\infty=\epi f^\infty$. Applying Proposition~\ref{prop: recession inclusion} to $\Omega=\epi f$, we have $N(\infty_I;\epi f)\subset (\epi f^\infty)^\circ$. A direct computation yields that
\begin{align*}
    \partial f(\infty) &= \{u\in\R^n\mid (u,-1)\in N(\infty_I;\epi f)\} \\
    &\subset \{u\in\R^n\mid (u,-1)\in (\epi f^\infty)^\circ\} \\
    &= \{u\in\R^n\mid \langle u,w\rangle\le f^\infty(w) \text{ for all } w\in\R^n\} = \cl(\dom f^*),
\end{align*}
where the last equality follows from the fact that $f^\infty=\sigma_{\dom f^*}$.

For the moreover part, suppose $f:\R^n\to\R$ is sublinear. Then $f^\infty=f$ and $\epi f$ is a closed convex cone. By \cite[Theorem~13.2 and Corollary~13.2.1]{rockafellar-convex}, we have
\[
    f = \sigma_{\partial f(0)} \quad\text{and}\quad f^* = \delta_{\partial f(0)}.
\]
So $\dom f^* = \partial f(0)$ is a closed convex set; in particular, $\cl(\dom f^*) = \dom f^*$, and the inclusion proved above reads $\partial f(\infty)\subset \dom f^*$. Hence, it suffices to prove that $\dom f^*\subset\partial f(\infty)$. To this end, we will show that $T(\infty_I;\epi f)\subset\epi f$. Let $(u,v) \in T(\infty_I;\epi f)$ and suppose to the contrary that $v<f(u)$. Using the presentation $f = \sigma_{\partial f(0)}$, we consider the following two cases.
\begin{itemize}
    \item If $u \neq 0$, we choose $\bar c \in \partial f(0)$ such that $\langle \bar c, u \rangle=f(u)$, and consider the closed affine half-space
    \[
    \Omega := \{(x,r) \in \R^n \times \R \mid \langle \bar c,x \rangle \le r\}.
    \]
    For $z_k := k(u,f(u)) \in \epi f$, we have $\|\pi(z_k)\| \to \infty$ and $z_k \in \bd\ \Omega$. Since $(u,v) \in T(\infty_I;\epi f)$, there exists a sequence $\{(u_k,v_k)\}\subset\R^n\times\R$ such that $(u_k,v_k) \to (u,v)$ and $(u_k,v_k) \in T(z_k;\epi f)$ for all $k$. Because $\epi f \subset \Omega$ and $z_k \in \bd\ \Omega$,
    \[
    T(z_k;\epi f) \subset T(z_k;\Omega) = \Omega.
    \]
    Thus $\langle \bar c,u_k \rangle \le v_k$ for all $k$. Passing to the limit yields $f(u) = \langle \bar c,u \rangle \le v$, a contradiction.

    \item If $u=0$, then $f(u)=f(0)=0$ and $v<0$. Choose any $u' \neq 0$ and $c' \in \partial f(0)$ such that $f(u') = \langle c', u'\rangle$. Repeating the above argument with $c'$ and $z'_k := k(u',f(u'))$ gives us $0 = \langle c',0 \rangle \le v$, again a contradiction.
\end{itemize}
Therefore $v\ge f(u)$, and hence $(u,v)\in \epi f$. This proves $T(\infty_I;\epi f)\subset \epi f$.

By polarity, we get
\begin{align*}
    \partial f(\infty) &\supset \{u\in\R^n\mid \langle u, w\rangle\le r \text{ for all } (w,r)\in\epi f\} \\
    &\supset \{u\in\R^n\mid \langle u,w\rangle\le f(w) \text{ for all } w\in\R^n\} = \partial f(0).
\end{align*}
Together with $\dom f^*=\partial f(0)$ and the inclusion from the first part, this gives us the desired equalities $\partial f(\infty)=\partial f(0)=\dom f^*$. The polar equality $T(\infty_I;\epi f)=\epi f$ then follows.
\end{proof}

When $f$ is polyhedral, the inclusion $\partial f(\infty)\subset \cl(\dom f^*)$ takes a more concrete form.

\begin{proposition} \label{cor: polyhedral subdiff}
Let $f:\R^n\to\R\cup\{\infty\}$ be a polyhedral, proper, lower semicontinuous, convex function with $\dom f$ unbounded. Then $\dom f^*$ is a polyhedral convex set, hence closed, and
\[
    \partial f(\infty)\subset \dom f^*.
\]
If, moreover, $\dom f=\R^n$, then the inclusion is an equality:
\[
    \partial f(\infty) = \dom f^*.
\]
\end{proposition}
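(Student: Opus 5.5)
The first assertion follows from standard polyhedral convex analysis together with Theorem~\ref{prop: geometry of subdifferential}. Since $f$ is polyhedral, its conjugate $f^*$ is again polyhedral convex by \cite[Theorem~19.2]{rockafellar-convex}. Hence $\dom f^*$ is a polyhedral convex set, and in particular it is closed. The inclusion $\partial f(\infty)\subset\cl(\dom f^*)$ from Theorem~\ref{prop: geometry of subdifferential} therefore reads $\partial f(\infty)\subset\dom f^*$.

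For the equality when $\dom f=\R^n$, I would write $f(x)=\max_{1\le i\le m}\{\langle c_i,x\rangle+\beta_i\}$. Then $f^\infty(d)=\max_i\langle c_i,d\rangle$ is finite and sublinear, and $\dom f^*=\conv\{c_1,\dots,c_m\}=\partial f^\infty(0)$. The strategy imitates the sublinear case of Theorem~\ref{prop: geometry of subdifferential}: I will show
\[
T(\infty_I;\epi f)\subset\epi f^\infty .
\]
Taking polars with Proposition~\ref{prop: basic properties}(ii) then gives $(\epi f^\infty)^\circ\subset N(\infty_I;\epi f)$. Moreover, $(u,-1)\in(\epi f^\infty)^\circ$ holds iff $\langle u,w\rangle\le f^\infty(w)$ for all $w$, i.e.\ iff $u\in\partial f^\infty(0)=\dom f^*$. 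This yields $\dom f^*\subset\partial f(\infty)$.

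To prove the tangent inclusion, let $(w,s)\in T(\infty_I;\epi f)$ and suppose first that $w\neq0$. Fix $x_0$ and set $z_k:=(x_0+kw,f(x_0+kw))\in\epi f$, so that $\|\pi(z_k)\|\to\infty$. Let $M(w):=\{i\mid\langle c_i,w\rangle=f^\infty(w)\}$. For every $i\notin M(w)$, the value $\langle c_i,x_0+kw\rangle+\beta_i$ falls behind the maximum linearly in $k$. Hence, for large $k$, the pieces active at $x_0+kw$ all lie in $M(w)$; fix one such index $j$, chosen along a subsequence. Then
\[
\epi f\subset\Omega_j:=\{(x,r)\mid\langle c_j,x\rangle+\beta_j\le r\},
\]
and $z_k\in\bd\Omega_j$, so $T(z_k;\epi f)\subset\{(d,r)\mid\langle c_j,d\rangle\le r\}$. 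By Proposition~\ref{prop: cones at infinity}(i), there are $(w_k,s_k)\in T(z_k;\epi f)$ converging to $(w,s)$. Passing to the limit gives
\[
s\ge\langle c_j,w\rangle=f^\infty(w).
\]

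The case $w=0$, where I must show $s\ge0=f^\infty(0)$, is the step I expect to be most delicate. Here I would repeat the argument with an arbitrary direction $w'\ne0$ in place of $w$. The sequence $z_k'$ built from $w'$ still escapes to infinity, and the same active index $j$ gives $\langle c_j,w_k\rangle\le s_k$ with $w_k\to0$, hence $s\ge0$.

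Combining both cases gives $T(\infty_I;\epi f)\subset\epi f^\infty$, and the polarity step above completes the equality $\partial f(\infty)=\dom f^*$.
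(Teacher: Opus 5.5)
Your proof is correct. The first part coincides with the paper's, but for the equality you take a different route.

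\textbf{The paper's route.} It applies Theorem~\ref{prop: polyhedral cones at infinity}, and hence Farkas' lemma, to $\epi f$. This gives the explicit formula $\partial f(\infty)=\conv\{a_j\mid j\in J^\infty_I(\epi f)\}$. It then proves $\conv\{a_j\mid j=1,\dots,m\}\subset\conv\{a_j\mid j\in J^\infty_I(\epi f)\}$ by comparing support functions: along each ray $x_0+td$ some piece $j^*$ is eventually active, so its active region is unbounded and $f^\infty(d)=\langle a_{j^*},d\rangle$.

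\textbf{Your route.} You use the same ray observation, but not to feed the polyhedral normal-cone formula. Instead you turn the eventually active piece into a supporting half-space at $(x_0+kw,f(x_0+kw))$, exactly as in the sublinear part of Theorem~\ref{prop: geometry of subdifferential}. This gives $T(\infty_I;\epi f)\subset\epi f^\infty$ directly, and you then pass to polars.

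\textbf{What each buys.} Your version avoids the asymptotic active index set and Farkas, and it handles the sublinear and polyhedral equality cases with one mechanism. Combined with Proposition~\ref{prop: recession inclusion} and $(\epi f)^\infty=\epi f^\infty$, it makes explicit the identity $T(\infty_I;\epi f)=\epi f^\infty$, i.e.\ $N(\infty_I;\epi f)=(\epi f^\infty)^\circ$. The only polyhedral input is a bounded family of supporting slopes, along escaping rays, whose inner products with $w$ approach $f^\infty(w)$. So the same argument with subgradients $c_k\in\partial f(x_0+kw)$ should also work, for instance, for globally Lipschitz finite convex $f$, giving $\partial f(\infty)=\cl(\dom f^*)$ there. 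The paper's route, in exchange, gives an explicit description of $\partial f(\infty)$ through the asymptotically active pieces. It also shows concretely that the slopes of pieces with bounded active regions are redundant.
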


\begin{proof}
By \cite[Theorem~19.2]{rockafellar-convex}, we have that $f^*$ is polyhedral. In particular, $\dom f^*$ is a polyhedral convex set, and hence closed. So the inclusion $\partial f(\infty) \subset \dom f^*$ follows from Theorem~\ref{prop: geometry of subdifferential}.

For the equality, assume that $\dom f=\R^n$ and $f$ admits the presentation $f(x)=\max_{j=1,\ldots,m}(\langle a_j,x\rangle-b_j)$, where $a_1,\ldots,a_m \in \R^n \setminus \{0\}$ and $b_1,\ldots,b_m \in \R$. It is clear that $f^\infty(d)=\max_{j=1,\ldots,m}\langle a_j,d\rangle$ for every $d\in\R^n$. By \cite[Theorem~13.3]{rockafellar-convex}, we have $f^\infty=\sigma_{\dom f^*}$, and so
\[
    \dom f^* = \conv\{a_j\mid j=1,\ldots,m\}.
\]
On the other hand, since $\epi f$ is a polyhedral convex set, applying Theorem~\ref{prop: polyhedral cones at infinity} to $\epi f$ gives us
\[
    \partial f(\infty) = \conv\{a_j\mid j\in J^\infty_I(\epi f)\},
\]
where $J^\infty_I(\epi f)$ consists of those $j$ whose active region $R_j:=\{x\in\R^n\mid f(x)=\langle a_j,x\rangle-b_j\}$ is unbounded (cf.\ Definition~\ref{def: asymptotic active set}).

It remains to prove the reverse inclusion $\dom f^*\subset\partial f(\infty)$, that is,
\[
    \conv\{a_j\mid j=1,\ldots,m\} \subset \conv\{a_j \mid j\in J^\infty_I(\epi f)\}.
\]
Since both sides are compact convex sets, this is equivalent to the inequality of their support functions,
\[
    f^\infty(d)\;\le\;\max_{j\in J^\infty_I(\epi f)}\langle a_j,d\rangle\qquad \text{for all } d\in\R^n.
\]
To prove this inequality, fix any $d\in\R^n$ and $x_0\in\R^n$. For every $t\ge 0$,
\[
    f(x_0+td)=\max_{j=1,\ldots,m}\big(\langle a_j,x_0\rangle-b_j+t\langle a_j,d\rangle\big),
\]
a maximum of $m$ affine functions of $t$. Hence there exist an index $j^*$ and a number $T\ge 0$ such that
\[
    f(x_0+td)=\langle a_{j^*},x_0\rangle-b_{j^*}+t\langle a_{j^*},d\rangle \qquad \text{for all } t\ge T.
\]
In particular, $x_0+td\in R_{j^*}$ for all $t\ge T$, so $R_{j^*}$ is unbounded and $j^*\in J^\infty_I(\epi f)$. Dividing by $t$ and letting $t\to\infty$ yields
\[
    f^\infty(d)=\lim_{t\to\infty}\frac{f(x_0+td)}{t}=\langle a_{j^*},d\rangle\le\max_{j\in J^\infty_I(\epi f)}\langle a_j,d\rangle,
\]
as required. Together with the inclusion $\partial f(\infty)\subset\dom f^*$ established above, this yields $\partial f(\infty)=\dom f^*$.
\end{proof}

The equality part of Proposition~\ref{cor: polyhedral subdiff} requires $\dom f=\R^n$. Without this condition, the inclusion may be strict. For instance, $f(x_1,x_2):=x_1$ on $\dom f:=\R_+\times[0,1]$ is polyhedral with $\dom f^*=\{(u_1,u_2)\mid u_1\le 1\}$, whereas $\partial f(\infty)=\{1\}\times\R\subsetneq \dom f^*$.

\begin{remark}
Theorem~\ref{prop: geometry of subdifferential} shows that, for a sublinear function, $\partial f(\infty)$ coincides with the usual convex subdifferential at $0$. The same conclusion may fail for the limiting subdifferential $\partial^M f(\infty)$. Indeed, consider the sublinear function $f(x)=|x|$ for $x\in\R$. Theorem~\ref{prop: geometry of subdifferential} gives us
    \[
    \partial f(\infty)=\partial f(0)=\dom f^*=[-1,1].
    \]
In contrast,
    \[
    \partial^M f(0)=\partial f(0)=[-1,1],
\qquad
    \partial^M f(\infty)=\{-1,1\}.
    \]
\end{remark}

\begin{remark}
\begin{enumerate}[\rm (i)]
    \item In the second part of Theorem~\ref{prop: geometry of subdifferential}, the function $f$ is required to be finite-valued. The conclusion may fail for extended-valued sublinear functions. For instance, let $f=\delta_{\R_+}$ on $\R$. Then $f$ is a proper, lower semicontinuous, convex, extended-valued sublinear function, and $\dom f=\R_+$ is unbounded. We have
    \[
    \partial f(0)=N_{\R_+}(0)=\R_-,\qquad f^*=\delta_{\R_-},\qquad \dom f^*=\R_-.
    \]
    On the other hand, $\epi f=\R_+\times\R_+$, so
    \[
    T(\infty_I;\epi f)=\R\times\R_+,\qquad N(\infty_I;\epi f)=(\R\times\R_+)^\circ=\{0\}\times\R_-.
    \]
    Therefore,
    \[
    \partial f(\infty) = \{u \in\R \mid (u,-1)\in \{0\}\times \R_-\} = \{0\} \neq \R_- = \partial f(0)=\dom f^*.
    \]

    \item The equality $\partial f(\infty)=\dom f^*$ needs more than properness, lower semicontinuity, and convexity. For instance, if $f(x)=x^2$ for $x\in\R$, then by a direct computation (cf.\ Example~\ref{ex: exp counterexample})
    $
    N(\infty_I;\epi f)=\R\times\{0\},
    $
and hence
    \[
    \partial f(\infty)=\varnothing.
    \]
On the other hand,
    \[
    \partial f(0)=\{0\},
    \qquad
    \dom f^*=\R.
    \]
Only the inclusion $\partial f(\infty)\subset\cl(\dom f^*)$ survives. Thus sublinearity is a sufficient condition, but it is not necessary for the equality. By Proposition~\ref{cor: polyhedral subdiff}, polyhedrality together with $\dom f=\R^n$ is another sufficient condition.
\end{enumerate}
\end{remark}

\begin{remark} \label{rem: sublinear facts} 
	Let $f:\R^n\to\R$ be a sublinear function. Then $\partial^\infty f(\infty)=\{0\}$ and $\partial f(\infty)$ is bounded. In fact, since $\dom f=\R^n$, we have
	\[
	T(\infty;\dom f)=\R^n \quad \text{and} \quad N(\infty;\dom f)=\{0\}.
	\] 
By Theorem~\ref{prop: geometry of subdifferential}, we have proven that $T(\infty_I;\epi f)=\epi f$. So for any $w\in T(\infty;\dom f)=\R^n$, choosing $r:=f(w)$ gives us $(w,r)\in\epi f=T(\infty_I;\epi f)$, which verifies condition~\eqref{eq: dom to epi}. Applying Proposition~\ref{prop: geometry of singular}, we obtain
    \[
    \partial^\infty f(\infty)=N(\infty;\dom f)=\{0\}.
    \]
Furthermore, combining this with Proposition~\ref{prop: singular and subdifferential}, we conclude that
    \[
    \Limsup_{t \searrow 0} t\,\partial f(\infty) = \{0\}.
    \]
    In particular, $\partial f(\infty)$ is bounded.
\end{remark}

We conclude this subsection with a few examples illustrating Theorem~\ref{prop: geometry of subdifferential} for several classes of sublinear functions.

\begin{example} \label{ex: support function}
Let \(\Omega\subset \R^n\) be a nonempty bounded set, and consider its support function $\sigma_\Omega$. Since $\Omega$ is bounded, $\sigma_\Omega$ is finite-valued on $\R^n$. Moreover, $\sigma_\Omega$ is sublinear, and it is well known that
    \[
    (\sigma_\Omega)^*(u) = \delta_{\cl\conv\ \Omega}(u).
    \]
Hence Theorem~\ref{prop: geometry of subdifferential} gives us
    \[
    \partial \sigma_\Omega(\infty) = \dom (\sigma_{\Omega})^* = \cl\conv\ \Omega.
    \]
\end{example}

\begin{example}
Let \(\{a_\lambda\}_{\lambda\in\Lambda}\subset \R^n\) be a bounded family and define
    \[
    f(x):=\sup_{\lambda\in\Lambda}\langle a_\lambda,x\rangle, \quad x\in\R^n.
    \]
Set $A:=\{a_\lambda\mid \lambda\in\Lambda\}$. Then $f=\sigma_A$. By Example~\ref{ex: support function}, we obtain
    \[
    \partial f(\infty) = \cl\conv\ A.
    \]
\end{example}

\begin{example} \label{ex: distance cone}
Let $K\subset\R^n$ be a nonempty closed convex cone, and define $d_K:\R^n\to\R$ by
    \[
    d_K(x):={\rm d}(x; K),\qquad x\in\R^n.
    \]
By Lemma~\ref{lemma: moreau} (with $C=K$), we have
    \[
    d_K(x)=\sup\bigl\{\langle u,x\rangle\mid u\in K^\circ,\ \|u\|\le 1\bigr\}
    =\sigma_{K^\circ \cap ~\cl~{\mathbb{B}}}(x).
    \]
As $K^\circ \cap ~\cl~{\mathbb{B}}$ is bounded, closed, and convex, Example~\ref{ex: support function} yields
    \[
    \partial d_K(\infty)=K^\circ \cap ~\cl~{\mathbb{B}}.
    \]
\end{example}

\begin{example}
Let \(C\subset \R^n\) be a nonempty closed convex absorbing set (that is, for every $x \in \R^n$ there exists $t>0$ such that $x \in tC$), with $0\in C$. Define the Minkowski functional
    \[
    \mu_C(x):=\inf\{t>0\mid x\in tC\}, \qquad x\in\R^n.
    \]
Since $C$ is absorbing, $\mu_C$ is finite-valued on $\R^n$. Moreover, because $C$ is convex and contains the origin, $\mu_C$ is sublinear. It is well known that
    \[
    (\mu_C)^*(u)=\delta_{C^\circ}(u).
    \]
Consequently, Theorem~\ref{prop: geometry of subdifferential} yields
    \[
    \partial \mu_C(\infty) = \dom(\mu_C)^* = C^\circ.
    \]
\end{example}

\subsection{Calculus Rules}

We start this section with a sum rule. The proof relies on a lifted intersection representation of $\epi h$ (where $h=f+g$), combined with the intersection rule for normal cones at infinity.

\begin{proposition} \label{prop: sum rule same-variable}
Let $f,g:\R^n\to\R\cup\{\infty\}$ be proper, lower semicontinuous, convex functions, and set $h:=f+g$. Assume that $\dom h$ is unbounded and that
\begin{equation} \label{eq: singular separation}
    \partial^\infty f(\infty) \cap \bigl(-\partial^\infty g(\infty)\bigr) = \{0\}.
\end{equation}
Then
\[
    \partial h(\infty) \subset \partial f(\infty)+\partial g(\infty).
\]
\end{proposition}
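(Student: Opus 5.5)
The plan is to lift $\epi h$ into $\R^n\times\R\times\R$ and apply Proposition~\ref{prop: normal intersection rule} there, with the coordinate set $I$ consisting of the first $n$ coordinates. Define the closed convex sets
\[
\Omega_1:=\{(x,r,s)\mid r\ge f(x)\},\qquad \Omega_2:=\{(x,r,s)\mid s\ge g(x)\},
\]
and the linear map $L(x,r,s):=(x,r+s)$. Then $L(\Omega_1\cap\Omega_2)=\epi h$. Indeed, $r\ge f(x)$ and $s\ge g(x)$ give $r+s\ge h(x)$. Conversely, if $(x,t)\in\epi h$, then $(x,f(x),t-f(x))\in\Omega_1\cap\Omega_2$. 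Since $\dom h$ is unbounded, $\pi_I(\Omega_1\cap\Omega_2)=\dom h$ is unbounded. The compatibility conditions \eqref{eq: compatibility infinity} and \eqref{eq: converse compatibility infinity} are trivial here, because $L$ does not change the $x$-coordinates that escape to infinity.

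\emph{Step 1: compute the cones at infinity of $\Omega_1$ and $\Omega_2$.} Up to a permutation of coordinates, $\Omega_1=\epi f\times\R$. For every point, $T((x,r,s);\Omega_1)=T((x,r);\epi f)\times\R$. Using the inner-limit description in Proposition~\ref{prop: cones at infinity}(i), where only $\|x\|\to\infty$ is required and $r,s$ are free, I expect
\[
T(\infty_I;\Omega_1)=\{(w,\rho,\sigma)\mid (w,\rho)\in T(\infty_I;\epi f),\ \sigma\in\R\}.
\]
Taking polars then gives $N(\infty_I;\Omega_1)=\{(u,\alpha,0)\mid (u,\alpha)\in N(\infty_I;\epi f)\}$. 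Symmetrically, $N(\infty_I;\Omega_2)=\{(u,0,\beta)\mid (u,\beta)\in N(\infty_I;\epi g)\}$. This step is the one to check carefully: one must confirm that the free $\R$ factor does not shrink the liminf. The direct definition does this: a sequence $w_k$ for $\epi f$ extends by $\sigma_k=\sigma$, and the converse is obtained by discarding the last coordinate. This avoids the product rule, which in general gives only an inclusion (Remark~\ref{rem: product strict}).

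\emph{Step 2: verify the qualification \eqref{eq: normal separation at infinity}.} Suppose $(u,\alpha,0)=-(u',0,\beta)$ with $(u,\alpha,0)\in N(\infty_I;\Omega_1)$ and $(u',0,\beta)\in N(\infty_I;\Omega_2)$. Comparing coordinates gives $\alpha=\beta=0$ and $u'=-u$. Hence $u\in\partial^\infty f(\infty)$ and $-u\in\partial^\infty g(\infty)$, so \eqref{eq: singular separation} forces $u=0$. Proposition~\ref{prop: normal intersection rule}(ii) therefore gives
\[
N(\infty_I;\Omega_1\cap\Omega_2)\subset N(\infty_I;\Omega_1)+N(\infty_I;\Omega_2).
\]

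\emph{Step 3: conclude.} Let $v\in\partial h(\infty)$, so that $(v,-1)\in N(\infty_I;\epi h)$. By Proposition~\ref{prop: normal linear image}(ii), applied with $\Omega=\Omega_1\cap\Omega_2$ and the map $L$, we get $L^\top(v,-1)=(v,-1,-1)\in N(\infty_I;\Omega_1\cap\Omega_2)$. By Step~2, we can write $(v,-1,-1)=(u_1,\alpha,0)+(u_2,0,\beta)$ with $(u_1,\alpha)\in N(\infty_I;\epi f)$ and $(u_2,\beta)\in N(\infty_I;\epi g)$. Comparing the last two coordinates gives $\alpha=\beta=-1$. Hence $u_1\in\partial f(\infty)$, $u_2\in\partial g(\infty)$, and $v=u_1+u_2\in\partial f(\infty)+\partial g(\infty)$.

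The main obstacle is the bookkeeping in Step~1, namely the exact identification of $T(\infty_I;\Omega_1)$ with the unbounded extra coordinates. It should be settled directly from Definition~\ref{def: cones at infinity} as indicated there.
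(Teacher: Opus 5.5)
Your proposal is correct and follows essentially the same route as the paper. The paper also lifts to $\Omega_1,\Omega_2\subset\R^n\times\R\times\R$ and uses the map $(x,r,s)\mapsto(x,r+s)$ together with Proposition~\ref{prop: normal linear image}(ii), the intersection rule of Proposition~\ref{prop: normal intersection rule}(ii), and a coordinate comparison. Your Step~1 also supplies the justification for the formulas for $N(\infty_I;\Omega_1)$ and $N(\infty_I;\Omega_2)$, which the paper only asserts, and your argument for it is sound: extend by a constant $\sigma$ for one inclusion, and fix $s_k=0$ then discard the last coordinate for the other.
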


\begin{proof}
Set
\begin{align*}
   \Omega_1 := \{(x,r,s) \in \R^n\times\R\times\R \mid r\ge f(x)\}, \\
   \Omega_2 := \{(x,r,s) \in \R^n\times\R\times\R \mid s\ge g(x)\}.
\end{align*}
Then $\Omega_1 \cap \Omega_2 = \{(x,r,s)\mid r\ge f(x),\ s\ge g(x)\}$. Consider the linear mapping
    \[
    A:\R^{n+2}\to\R^{n+1},\qquad
    A(x,r,s):=(x,r+s).
    \]
A direct computation gives $A(\Omega_1\cap\Omega_2)=\epi h$. Let $I=\{1,\ldots,n\}$ select the $x$-coordinates of each space, so that $\pi_I(x,r,s)=\pi_I(A(x,r,s))=x$. Then the two hypotheses of Proposition~\ref{prop: normal linear image} are satisfied: condition~\eqref{eq: compatibility infinity} holds, and $\pi_I(A(\Omega_1\cap\Omega_2))=\dom h$ is unbounded by hypothesis. Now take any $u\in\partial h(\infty)$, so that $(u,-1)\in N(\infty_I;\epi h)$. By Proposition \ref{prop: normal linear image}(ii),
    \begin{equation} \label{eq: Atop normal}
          A^\top(u,-1) = (u,-1,-1) \in N(\infty_I;\Omega_1\cap\Omega_2).  
    \end{equation}
Moreover
\begin{align*}
    N(\infty_I;\Omega_1) = \{(u,\alpha,0) \mid (u,\alpha) \in N(\infty_I;\epi f)\}, \\
    N(\infty_I;\Omega_2) =\{(u,0,\beta) \mid (u,\beta) \in N(\infty_I;\epi g)\}.
\end{align*}
Hence
    \[
    N(\infty_I;\Omega_1) \cap \bigl(-N(\infty_I;\Omega_2)\bigr)
    = \{(u,0,0)\mid (u,0)\in N(\infty_I;\epi f),\ (-u,0)\in N(\infty_I;\epi g)\}.
    \]
Therefore, the assumption \eqref{eq: singular separation} is equivalent to \eqref{eq: normal separation at infinity} in Proposition~\ref{prop: normal intersection rule}, and so
    \begin{equation} \label{eq: normal intersection}
    N(\infty_I;\Omega_1\cap\Omega_2) \subset N(\infty_I;\Omega_1)+N(\infty_I;\Omega_2).
    \end{equation}
Combining \eqref{eq: Atop normal} and \eqref{eq: normal intersection}, we can write
    \[
    (u,-1,-1)=(a,-1,0)+(b,0,-1),
    \]
with
    \[
    (a,-1,0) \in N(\infty_I;\Omega_1),
    \qquad
    (b,0,-1) \in N(\infty_I;\Omega_2).
    \]
By the above presentations of $N(\infty_I;\Omega_1)$ and $N(\infty_I;\Omega_2)$, we obtain $(a,-1)\in N(\infty_I;\epi f)$ and $(b,-1)\in N(\infty_I;\epi g)$, that is,
    \[
     a\in \partial f(\infty),\qquad b\in \partial g(\infty).
    \]
Hence $u=a+b \in \partial f(\infty)+\partial g(\infty)$, which completes the proof.
\end{proof}

\begin{remark} \label{rem: sum polyhedral}
If $f$ and $g$ are polyhedral, then $h=f+g$ is polyhedral with $\dom h$ a polyhedral convex set. Furthermore, it follows from~\cite[Theorem~20.1]{rockafellar-convex} that $(f+g)^*=f^*\square g^*$, so $\dom h^*=\dom f^*+\dom g^*$. Proposition~\ref{cor: polyhedral subdiff} then yields the inclusion $\partial h(\infty)\subset\dom f^*+\dom g^*$ without requiring \eqref{eq: singular separation}.
\end{remark}

In particular, if one of the two functions is sublinear and finite-valued, then \eqref{eq: singular separation} holds automatically by Remark~\ref{rem: sublinear facts}, and the conclusion of Proposition~\ref{prop: sum rule same-variable} needs no further qualification.

\begin{corollary}
Let $f: \R^n \to \R\cup\{\infty\}$ be a proper, lower semicontinuous, convex function, and $g: \R^n \to \R$ be a sublinear function. Assume that $\dom (f+g)$ is unbounded. Then
    \[
    \partial (f+g)(\infty)\subset \partial f(\infty)+\partial g(\infty).
    \]
\end{corollary}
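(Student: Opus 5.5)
This is a direct application of Proposition~\ref{prop: sum rule same-variable}, so the plan is to verify its hypotheses. Its standing assumptions are that $f$ and $g$ are proper, lower semicontinuous, convex functions and that $\dom(f+g)$ is unbounded.

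First, a finite-valued sublinear function $g:\R^n\to\R$ is proper and convex. It is also continuous, being a finite convex function on $\R^n$, and hence lower semicontinuous. Moreover $\dom g=\R^n$ is unbounded, so $\partial g(\infty)$ and $\partial^\infty g(\infty)$ are well defined in the sense of Definition~\ref{def: subdiff infinity}. The hypothesis that $\dom(f+g)=\dom f$ is unbounded is given; in particular $\dom f$ is unbounded, so $\partial^\infty f(\infty)$ also makes sense.

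Second, I would verify the qualification condition \eqref{eq: singular separation}. By Remark~\ref{rem: sublinear facts}, a finite sublinear $g$ satisfies $\partial^\infty g(\infty)=\{0\}$. That remark rests on Theorem~\ref{prop: geometry of subdifferential}, which gives $T(\infty_I;\epi g)=\epi g$, and on Proposition~\ref{prop: geometry of singular} applied with condition \eqref{eq: dom to epi}. Hence
\[
\partial^\infty f(\infty)\cap\bigl(-\partial^\infty g(\infty)\bigr)=\partial^\infty f(\infty)\cap\{0\}=\{0\},
\]
where the last equality uses that $\partial^\infty f(\infty)$ is a cone containing $0$ (Proposition~\ref{prop: basic subdiff at infinity}(i)).

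Finally, Proposition~\ref{prop: sum rule same-variable} applied with $h=f+g$ yields $\partial(f+g)(\infty)\subset\partial f(\infty)+\partial g(\infty)$. There is no real obstacle here. The only point needing care is that Remark~\ref{rem: sublinear facts} uses the finite-valuedness of $g$, which is exactly why $g$ is assumed to map into $\R$; by Theorem~\ref{prop: geometry of subdifferential}, the summand $\partial g(\infty)$ can then be identified with $\dom g^*=\partial g(0)$ if desired.
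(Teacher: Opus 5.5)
Your proposal is correct and follows the paper's own argument. The paper obtains the corollary from the sum rule of Proposition~\ref{prop: sum rule same-variable}, noting that the qualification \eqref{eq: singular separation} holds automatically because $\partial^\infty g(\infty)=\{0\}$ by Remark~\ref{rem: sublinear facts}. (A slightly shorter route to that last fact: since $g^\infty=g$ is finite, Proposition~\ref{prop: geometry of singular} gives directly $\partial^\infty g(\infty)\subset(\dom g^\infty)^\circ=(\R^n)^\circ=\{0\}$.)
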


\begin{proposition} \label{prop: chain rule linear}
Let $A:\R^n\to\R^n$ be invertible, and let $f:\R^n\to\R\cup\{\infty\}$ be a proper, lower semicontinuous, convex function with $\dom f$ unbounded. Define
\[
h:\R^n\to\R\cup\{\infty\},\qquad h(x):=f(Ax).
\]
Then
\[
\partial h(\infty)=A^\top[\partial f(\infty)].
\]
\end{proposition}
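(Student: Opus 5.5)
The plan is to relate the epigraphs of $h$ and $f$ through an invertible linear map on $\R^{n+1}$ and then apply Proposition~\ref{prop: normal linear image}(iii). Define
\[
\widehat A:\R^n\times\R\to\R^n\times\R,\qquad \widehat A(x,r):=(Ax,r).
\]
Then $(x,r)\in\epi h$ if and only if $r\ge f(Ax)$, if and only if $(Ax,r)\in\epi f$. Hence $\widehat A(\epi h)=\epi f$, and equivalently $\epi h=\widehat A^{-1}(\epi f)$. The map $\widehat A$ is invertible with $\widehat A^\top(u,s)=(A^\top u,s)$. The function $h$ is proper, lower semicontinuous and convex, and $\dom h=A^{-1}(\dom f)$ is unbounded since $A^{-1}$ is a linear bijection. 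Thus Definition~\ref{def: subdiff infinity} applies to $h$.

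Next I verify the compatibility conditions \eqref{eq: compatibility infinity} and \eqref{eq: converse compatibility infinity} for $\widehat A$. On both copies of $\R^{n+1}$ the index set is $I=\{1,\dots,n\}$, which selects the $x$-component. For $(x_k,r_k)\in\epi h$ we have $\|\pi_I(\widehat A(x_k,r_k))\|=\|Ax_k\|$ and $\|\pi_I(x_k,r_k)\|=\|x_k\|$. As in Remark~\ref{rem: full-coord linear image}, the bounds $c\|x\|\le\|Ax\|\le C\|x\|$ show that $\|Ax_k\|\to\infty$ if and only if $\|x_k\|\to\infty$. Both implications therefore hold. The set $\pi_I(\widehat A(\epi h))=\dom f$ is unbounded.

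Applying Proposition~\ref{prop: normal linear image}(iii) with $\Omega=\epi h$ gives
\[
\widehat A^\top N(\infty_I;\epi f)=N(\infty_I;\epi h).
\]
Now I read off the subgradients. If $u\in\partial f(\infty)$, then $(u,-1)\in N(\infty_I;\epi f)$, so
\[
\widehat A^\top(u,-1)=(A^\top u,-1)\in N(\infty_I;\epi h),
\]
that is, $A^\top u\in\partial h(\infty)$. Conversely, let $w\in\partial h(\infty)$. Then $(w,-1)=\widehat A^\top(u,s)$ for some $(u,s)\in N(\infty_I;\epi f)$. Comparing the last coordinate gives $s=-1$ and $w=A^\top u$, so $u\in\partial f(\infty)$. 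Together these give $\partial h(\infty)=A^\top[\partial f(\infty)]$.

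The main point to check carefully is that Proposition~\ref{prop: normal linear image} applies with the \emph{partial} index set $I=\{1,\dots,n\}$ inside $\R^{n+1}$, since Remark~\ref{rem: full-coord linear image} only treats full coordinates. Because $\widehat A$ leaves the $r$-coordinate unchanged and acts by $A$ on the $x$-block, the required equivalence of divergence reduces exactly to the full-coordinate statement for $A$ on $\R^n$. Everything else is bookkeeping with the last coordinate.
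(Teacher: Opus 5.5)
Your proposal is correct and matches the paper's proof: you lift $A$ to $(x,r)\mapsto(Ax,r)$, observe that this map sends $\epi h$ onto $\epi f$ and satisfies both compatibility conditions because $\|Ax\|\to\infty$ if and only if $\|x\|\to\infty$, apply Proposition~\ref{prop: normal linear image}(iii), and read off the normals with last coordinate $-1$. Your explicit check that the partial index set $I=\{1,\dots,n\}$ inside $\R^{n+1}$ reduces to the full-coordinate statement for $A$ is a point the paper passes over quickly, and you handle it correctly.
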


\begin{proof}
Define $B:\R^{n+1}\to\R^{n+1}$ by $B(x,r):=(Ax,r)$, and let $\pi:\R^{n+1}\to\R^n$ be the projection $(x,r)\mapsto x$ (so $I=\{1,\ldots,n\}$). Then $B$ is invertible, $B^{-1}(y,r)=(A^{-1}y,r)$, and $B(\epi h)=\epi f$. Since $A$ is invertible, $\|Ax\|\to\infty$ if and only if $\|x\|\to\infty$, so both \eqref{eq: compatibility infinity} and \eqref{eq: converse compatibility infinity} hold for $B$ on $\epi h$. Moreover, $\pi(B(\epi h))=\pi(\epi f)=\dom f$ is unbounded by hypothesis. By Proposition~\ref{prop: normal linear image}(iii),
\[
B^\top N(\infty_I;\epi f)=N(\infty_I;\epi h).
\]
Let $u\in\R^n$. Since $B^\top(u,-1)=(A^\top u,-1)$, we have
\[
u\in\partial f(\infty)\iff(u,-1)\in N(\infty_I;\epi f)\iff(A^\top u,-1)\in N(\infty_I;\epi h)\iff A^\top u\in\partial h(\infty).
\]
This yields $\partial h(\infty)=A^\top[\partial f(\infty)]$.
\end{proof}

\begin{proposition} \label{prop: max rule}
Let $f,g:\R^n\to\R\cup\{\infty\}$ be proper, lower semicontinuous, convex functions and set
\[
    h(x):=\max\{f(x),g(x)\},\qquad x\in\R^n.
\]
Assume that $\dom h$ is unbounded and that \eqref{eq: singular separation} is satisfied. Then
\[
    \partial h(\infty)
    \subset
    \conv\bigl[\partial f(\infty)\cup\partial g(\infty)\bigr]
    \;\cup\; \bigl[\partial f(\infty)+\partial^\infty g(\infty)\bigr]
    \;\cup\; \bigl[\partial g(\infty)+\partial^\infty f(\infty)\bigr].
\]
\end{proposition}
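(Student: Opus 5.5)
The plan is to write $\epi h$ as an intersection and apply the intersection rule of Proposition~\ref{prop: normal intersection rule} in $\R^{n+1}$, with $I=\{1,\ldots,n\}$ selecting the $x$-coordinates. Since $h=\max\{f,g\}$, we have
\[
\epi h=\epi f\cap\epi g .
\]
Set $\Omega_1:=\epi f$ and $\Omega_2:=\epi g$. Both are closed convex sets, and $\pi_I(\Omega_1\cap\Omega_2)=\dom h$ is unbounded by hypothesis, so the standing assumptions of Lemma~\ref{lem: normal separation at infinity} and Proposition~\ref{prop: normal intersection rule} are met.

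\emph{Step 1: last coordinates of normals are nonpositive.} For any proper convex $\varphi$ we have $\epi\varphi+\{0\}\times\R_+\subset\epi\varphi$, so $(0,1)\in(\epi\varphi)^\infty$. By Proposition~\ref{prop: recession inclusion}, $(0,1)\in T(\infty_I;\epi\varphi)$. Hence every $(u,\alpha)\in N(\infty_I;\epi\varphi)$ satisfies $\alpha\le 0$.

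\emph{Step 2: the qualification condition.} We verify \eqref{eq: normal separation at infinity} for $\Omega_1,\Omega_2$. Suppose $(u,\alpha)\in N(\infty_I;\epi f)$ and $(-u,-\alpha)\in N(\infty_I;\epi g)$. Step~1 gives $\alpha\le0$ and $-\alpha\le 0$, so $\alpha=0$. Then $u\in\partial^\infty f(\infty)$ and $-u\in\partial^\infty g(\infty)$, and \eqref{eq: singular separation} forces $u=0$. This Step~1--2 sign argument is the only point needing care, namely that the cone at infinity is taken with respect to $x$-coordinates only. Proposition~\ref{prop: recession inclusion} handles this uniformly for any $I$, so I expect no real obstacle.

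\emph{Step 3: decompose and split into cases.} Take $u\in\partial h(\infty)$, so $(u,-1)\in N(\infty_I;\epi h)$. Proposition~\ref{prop: normal intersection rule}(ii) gives
\[
(u,-1)=(a,-\alpha)+(b,-\beta),\qquad (a,-\alpha)\in N(\infty_I;\epi f),\quad (b,-\beta)\in N(\infty_I;\epi g).
\]
By Step~1, $\alpha,\beta\ge 0$, and $\alpha+\beta=1$. We distinguish three cases.
\begin{itemize}
\item[(a)] If $\alpha,\beta>0$, the cone property gives $(a/\alpha,-1)\in N(\infty_I;\epi f)$ and $(b/\beta,-1)\in N(\infty_I;\epi g)$. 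Thus $a/\alpha\in\partial f(\infty)$, $b/\beta\in\partial g(\infty)$, and
\[
u=\alpha\,(a/\alpha)+\beta\,(b/\beta)\in\conv\bigl[\partial f(\infty)\cup\partial g(\infty)\bigr].
\]
\item[(b)] If $\alpha=1$ and $\beta=0$, then $a\in\partial f(\infty)$ and $b\in\partial^\infty g(\infty)$, so $u\in\partial f(\infty)+\partial^\infty g(\infty)$.
\item[(c)] The case $\alpha=0$, $\beta=1$ is symmetric and gives $u\in\partial g(\infty)+\partial^\infty f(\infty)$.
\end{itemize}
These three cases together yield the stated inclusion.
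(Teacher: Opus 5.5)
Your proposal is correct and follows essentially the same route as the paper: write $\epi h=\epi f\cap\epi g$, use $(0,1)\in T(\infty_I;\cdot)$ to force nonpositive last coordinates of normals at infinity, deduce the qualification \eqref{eq: normal separation at infinity} from \eqref{eq: singular separation}, apply Proposition~\ref{prop: normal intersection rule}(ii), and split into the same three cases. The only cosmetic difference is that you obtain $(0,1)\in T(\infty_I;\epi\varphi)$ via Proposition~\ref{prop: recession inclusion}, whereas the paper checks it directly from the definition.
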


\begin{proof}
Since $\epi h=\epi f\cap\epi g$, we apply Proposition~\ref{prop: normal intersection rule}(ii) to $\epi f$ and $\epi g$.

We first verify the condition~\eqref{eq: normal separation at infinity} required in Proposition~\ref{prop: normal intersection rule}. Set $e:=(0,1)\in \R^{n+1}$.
We claim that
\begin{equation} \label{eq: e in epi tangent}
e\in T(\infty_I;\epi f)
\quad\text{and}\quad
e\in T(\infty_I;\epi g).
\end{equation}
Indeed, if \((x,r)\in\epi f\) and \(t\ge 0\), then $(x,r)+te=(x,r+t)\in\epi f$. The same argument applies to \(\epi g\).

Now take $(u,\alpha)\in N(\infty_I;\epi f)\cap
\big[-N(\infty_I;\epi g)\big]$. On one hand, since $(u,\alpha)\in N(\infty_I;\epi f)$ and $e\in T(\infty_I;\epi f)$, we have $\alpha\le 0$. On the other hand, since $(u,\alpha)\in -N(\infty_I;\epi g)$ and $e\in T(\infty_I;\epi g)$, we obtain $\alpha\ge 0$. Hence $\alpha = 0$, and
\[
(u,0)\in N(\infty_I;\epi f),
\qquad
(-u,0)\in N(\infty_I;\epi g),
\]
which means
\[
u\in\partial^\infty f(\infty)
\quad\text{and}\quad
-u\in\partial^\infty g(\infty).
\]
By condition~\eqref{eq: singular separation}, we get \(u=0\). Thus
\[
N(\infty_I;\epi f)\cap
\big[-N(\infty_I;\epi g)\big]
=\{0\}.
\]
Consequently, Proposition~\ref{prop: normal intersection rule}(ii) gives
    \[
    N(\infty_I;\epi h)\subset N(\infty_I;\epi f)+N(\infty_I;\epi g).
    \]
Fix $u\in\partial h(\infty)$. By definition, we have $(u,-1)\in N(\infty_I;\epi h)$. Hence, there exist $(u_1,\alpha_1)\in N(\infty_I;\epi f)$ and $(u_2,\alpha_2)\in N(\infty_I;\epi g)$ such that
\[
    (u,-1)=(u_1,\alpha_1)+(u_2,\alpha_2).
\]
By a similar discussion as above, \eqref{eq: e in epi tangent} yields $\alpha_1\le 0$ and $\alpha_2\le 0$. Since $\alpha_1+\alpha_2=-1$, we distinguish three cases.

\begin{itemize}
    \item Case 1: $\alpha_1<0$ and $\alpha_2<0$. Set $\lambda_1:=-\alpha_1>0$ and $\lambda_2:=-\alpha_2>0$. Then $\lambda_1+\lambda_2=1$. Since normal cones at infinity are cones, we have
    \[
    \left(\frac{u_1}{\lambda_1},-1\right)\in N(\infty_I;\epi f),
    \qquad
    \left(\frac{u_2}{\lambda_2},-1\right)\in N(\infty_I;\epi g),
    \]
so $u_1/\lambda_1\in\partial f(\infty)$ and $u_2/\lambda_2\in\partial g(\infty)$. Hence
    \[
    u=\lambda_1\frac{u_1}{\lambda_1}+\lambda_2\frac{u_2}{\lambda_2}
    \in \conv\bigl(\partial f(\infty)\cup\partial g(\infty)\bigr).
    \]

    \item Case 2: $\alpha_1=0$ and $\alpha_2=-1$. Then $(u_1,0)\in N(\infty_I;\epi f)$ and $(u_2,-1)\in N(\infty_I;\epi g)$, so $u_1\in\partial^\infty f(\infty)$ and $u_2\in\partial g(\infty)$. Thus $u\in \partial g(\infty)+\partial^\infty f(\infty)$.

    \item Case 3: $\alpha_1=-1$ and $\alpha_2=0$. Similarly, $u\in \partial f(\infty)+\partial^\infty g(\infty)$.
\end{itemize}
Combining the three cases gives us the desired inclusion.
\end{proof}

We close this subsection with two further rules at infinity, for the infimal convolution and for the marginal function of a jointly convex parametric problem. The \emph{infimal convolution} of two functions $f,g:\R^n\to\R\cup\{\infty\}$ is defined by
\[
    (f\square g)(x) := \inf_{y\in\R^n}\bigl\{f(x-y)+g(y)\bigr\},\qquad x\in\R^n.
\]

\begin{proposition} \label{prop: inf-conv at infinity}
Let $f,g:\R^n\to\R\cup\{\infty\}$ be proper, lower semicontinuous, convex functions, and set $h:=f\square g$. Assume that $h$ is proper and lower semicontinuous with $\dom h$ unbounded. Then:
\begin{enumerate}[\rm (i)]
    \item $\partial h(\infty) \subset \cl(\dom f^*\cap\dom g^*)$;
    \item if $f$ and $g$ are sublinear and finite-valued, then $\partial h(\infty) = \partial f(\infty)\cap\partial g(\infty)$;
    \item if $f$ and $g$ are polyhedral, then $\partial h(\infty)\subset\dom f^*\cap\dom g^*$, with equality when $\dom f+\dom g=\R^n$.
\end{enumerate}
\end{proposition}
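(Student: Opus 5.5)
The plan is to reduce all three parts to Theorem~\ref{prop: geometry of subdifferential} and Proposition~\ref{cor: polyhedral subdiff} applied to $h$ itself, using the standard conjugate formula $(f\square g)^*=f^*+g^*$. That formula holds for arbitrary functions: exchange the two suprema in $\sup_x\{\langle u,x\rangle-\inf_y(f(x-y)+g(y))\}$. Consequently $\dom h^*=\dom f^*\cap\dom g^*$.

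\emph{Part (i).} Since $h$ is assumed proper, lower semicontinuous and convex (convexity of the infimal convolution of convex functions is standard) with $\dom h$ unbounded, Theorem~\ref{prop: geometry of subdifferential} gives $\partial h(\infty)\subset\cl(\dom h^*)=\cl(\dom f^*\cap\dom g^*)$.

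\emph{Part (ii).} If $f,g$ are finite-valued and sublinear, then $f=\sigma_{\partial f(0)}$ and $g=\sigma_{\partial g(0)}$ with $\partial f(0),\partial g(0)$ compact convex. The first step is to check that $h$ is finite-valued and sublinear. Finiteness holds because $\partial f(0)\cap\partial g(0)\ne\emptyset$ (this needs justification; otherwise $h\equiv-\infty$, contradicting properness). Positive homogeneity and subadditivity pass through the infimum directly. Hence Theorem~\ref{prop: geometry of subdifferential} applies to $h$ and yields $\partial h(\infty)=\dom h^*=\dom f^*\cap\dom g^*$, where $\dom h^*$ is already closed. Applying the same theorem to $f$ and to $g$ gives $\partial f(\infty)=\dom f^*$ and $\partial g(\infty)=\dom g^*$. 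This proves $\partial h(\infty)=\partial f(\infty)\cap\partial g(\infty)$. The only subtle point is the properness argument, and it is covered by the hypothesis that $h$ is proper.

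\emph{Part (iii).} If $f,g$ are polyhedral, then $f^*,g^*$ are polyhedral by \cite[Theorem~19.2]{rockafellar-convex}, so $\dom f^*\cap\dom g^*$ is polyhedral and hence closed. Part (i) then gives $\partial h(\infty)\subset\dom f^*\cap\dom g^*$. For the equality case, the plan is as follows.
\begin{itemize}
\item First, $h$ is polyhedral: when proper, the infimal convolution of polyhedral functions is polyhedral and the infimum is attained \cite[Corollary~19.3.4]{rockafellar-convex}.
\item Second, $\dom h=\dom f+\dom g=\R^n$.
\item Then the equality part of Proposition~\ref{cor: polyhedral subdiff} applies to $h$ and gives $\partial h(\infty)=\dom h^*=\dom f^*\cap\dom g^*$.
\end{itemize}
I expect the main obstacle to be this equality case. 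The points to verify carefully are that $h$ is indeed polyhedral, proper, and finite everywhere, so that the representation $h=\max_j(\langle a_j,\cdot\rangle-b_j)$ used in Proposition~\ref{cor: polyhedral subdiff} is available. If the cited polyhedrality result needs extra care, I would instead argue via $\epi h=\epi f+\epi g$, which is a sum of polyhedral sets and therefore polyhedral and closed.
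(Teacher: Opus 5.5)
Your proposal is correct and follows the paper's proof. Both arguments use $(f\square g)^*=f^*+g^*$ to get $\dom h^*=\dom f^*\cap\dom g^*$, and then apply Theorem~\ref{prop: geometry of subdifferential} (for (i) and (ii)) and Proposition~\ref{cor: polyhedral subdiff} (for (iii)) to $h$ itself. The differences are minor: in (ii) you check that $h$ is finite-valued and sublinear, which the paper only asserts (finiteness already follows from $h\le f+g(0)$ and properness), and in (iii) you get the inclusion from (i) plus closedness of the polyhedral set $\dom f^*\cap\dom g^*$, rather than from polyhedrality of $h$.
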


\begin{proof}
Since $f$ and $g$ are convex, so is $h$, and the Fenchel identity $h^*=f^*+g^*$~\cite[Theorem~16.4]{rockafellar-convex} gives
\[
    \dom h^*=\dom f^*\cap\dom g^*.
\]

(i) As $h$ is proper, lower semicontinuous, and convex with $\dom h$ unbounded, Theorem~\ref{prop: geometry of subdifferential} gives $\partial h(\infty)\subset\cl(\dom h^*)=\cl(\dom f^*\cap\dom g^*)$.

(ii) If $f$ and $g$ are sublinear and finite-valued, then $h$ is finite-valued sublinear, so the equality in Theorem~\ref{prop: geometry of subdifferential} gives us $\partial h(\infty)=\dom h^*=\dom f^*\cap\dom g^* = \partial f(\infty)\cap\partial g(\infty)$.

(iii) If $f$ and $g$ are polyhedral, then $h$ is polyhedral, so Proposition~\ref{cor: polyhedral subdiff} gives
\[
    \partial h(\infty)\subset\dom h^*=\dom f^*\cap\dom g^*,
\]
with equality when $\dom h=\dom f+\dom g=\R^n$.
\end{proof}

The \emph{marginal function} of a proper, lower semicontinuous, convex function $F:\R^n\times\R^p\to\R\cup\{\infty\}$ is $m(x):=\inf_{y\in\R^p}F(x,y)$. For a nonempty index set $I\subset\{1,\ldots,n+p\}$ with $\pi_I(\dom F)$ unbounded, set
\[
    \partial F(\infty_I) := \big\{z^*\in\R^{n+p}\mid (z^*,-1)\in N(\infty_I;\epi F)\big\},
\]
where $I$, read inside $\{1,\ldots,n+p+1\}$, refers only to the base variables of $\epi F$. This is the partial-escape form of Definition~\ref{def: subdiff infinity}(i).

\begin{proposition} \label{prop: marginal at infinity}
Assume that $m$ is proper and lower semicontinuous with $\dom m$ unbounded, and that the infimum is attained for every $x\in\dom m$. Then
\[
    \partial m(\infty) \subset \bigl\{u\in\R^n\mid (u,0)\in\partial F(\infty_I)\bigr\},
\]
where $I=\{1,\ldots,n\}\subset\{1,\ldots,n+p\}$.
\end{proposition}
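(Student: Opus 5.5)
The plan is to realize $\epi m$ as a linear image of $\epi F$ and then apply Proposition~\ref{prop: normal linear image}(ii), in the same way the sum rule (Proposition~\ref{prop: sum rule same-variable}) was obtained. Define the linear map
\[
A:\R^n\times\R^p\times\R\to\R^n\times\R,\qquad A(x,y,r):=(x,r),
\]
so that $A^\top(u,\beta)=(u,0,\beta)$. I will take $\Omega:=\epi F\subset\R^{n+p+1}$ with index set $I=\{1,\ldots,n\}$, which selects the $x$-block. On the target side I take $J=\{1,\ldots,n\}\subset\{1,\ldots,n+1\}$, which selects the base variable of $\epi m$. With these choices, $N(\infty_J;\epi m)$ is exactly the cone used in Definition~\ref{def: subdiff infinity} for $m$, and $N(\infty_I;\epi F)$ is exactly the cone defining $\partial F(\infty_I)$.

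\emph{Step 1: $A(\epi F)=\epi m$.} This is the step where the attainment hypothesis is needed, and I expect it to be the main point requiring care. For the inclusion $\subset$: if $F(x,y)\le r$, then $m(x)\le F(x,y)\le r$. For $\supset$: if $m(x)\le r$ with $r\in\R$, then $x\in\dom m$, so by attainment there is $y$ with $F(x,y)=m(x)\le r$; hence $(x,y,r)\in\epi F$ and $A(x,y,r)=(x,r)$. Without attainment, only the strict epigraph would be captured, which is why the hypothesis appears. Convexity of $\epi F$ and unboundedness of $\epi F$ are clear, since $\dom m$ is unbounded and $\dom m=\pi(\dom F)$.

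\emph{Step 2: the hypotheses of Proposition~\ref{prop: normal linear image}.} First, $\pi_J(A(\epi F))=\dom m$, which is unbounded by assumption. Second, for the compatibility condition \eqref{eq: compatibility infinity}, take $z_k=(x_k,y_k,r_k)\in\epi F$. Then $\pi_J(Az_k)=x_k=\pi_I(z_k)$, so the implication holds trivially.

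\emph{Step 3: conclusion.} Let $u\in\partial m(\infty)$, that is, $(u,-1)\in N(\infty_J;\epi m)=N(\infty_J;A(\epi F))$. Proposition~\ref{prop: normal linear image}(ii) gives
\[
A^\top(u,-1)=(u,0,-1)\in N(\infty_I;\epi F).
\]
Reading this with $z^*=(u,0)\in\R^{n+p}$, it says precisely that $(u,0)\in\partial F(\infty_I)$. This is the asserted inclusion.
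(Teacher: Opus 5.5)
Your proposal is correct and follows essentially the same route as the paper. The paper also takes the projection $(x,y,r)\mapsto(x,r)$, uses attainment to get that it maps $\epi F$ onto $\epi m$, and notes that the compatibility condition \eqref{eq: compatibility infinity} is trivial because the $x$-block is preserved. It then applies Proposition~\ref{prop: normal linear image}(ii) to obtain $((u,0),-1)\in N(\infty_I;\epi F)$. Your explicit separation of the source index set $I$ from the target index set $J$ is a small clarity gain over the paper's reuse of $I$ in both spaces.
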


\begin{proof}
Consider the linear projection
\[
    \pi:\R^{n+p+1}\to\R^{n+1},\qquad \pi(x,y,r) := (x,r).
\]
Under the attainment assumption, $\pi(\epi F)=\epi m$. Since $\pi_I(x,y,r)=x=\pi_I(\pi(x,y,r))$, condition~\eqref{eq: compatibility infinity} holds, and $\pi_I(\pi(\epi F))=\dom m$ is unbounded by hypothesis. By Proposition~\ref{prop: normal linear image}(ii),
\[
    \pi^\top N(\infty_I;\epi m) \subset N(\infty_I;\epi F).
\]
For $u\in\partial m(\infty)$, we have $(u,-1)\in N(\infty_I;\epi m)$, which gives us $\pi^\top(u,-1) = ((u,0),-1)\in N(\infty_I;\epi F)$. Hence $(u,0)\in\partial F(\infty_I)$.
\end{proof}

\begin{remark} \label{rem: marginal polyhedral}
If $F$ is polyhedral and the infimum in the definition of $m$ is attained for every $x\in\dom m$, then $m$ is polyhedral on $\R^n$ with $m^*(u)=F^*(u,0)$. Proposition~\ref{cor: polyhedral subdiff} yields
\[
    \partial m(\infty) \subset \dom m^* = \{u\in\R^n\mid F^*(u,0)<\infty\}.
\]
\end{remark}

\subsection{Clarke Directional Derivative at Infinity} \label{subsec: clarke deriv infinity}

Following the Clarke-type framework at infinity in~\cite{tung23a,tung23b}, we study the directional derivative at infinity in the convex setting.

\begin{definition} \label{def: clarke deriv infinity}
Let $f:\R^n\to\R\cup\{\infty\}$ be a proper, lower semicontinuous, convex function with $\dom f$ unbounded. The \emph{Clarke directional derivative of $f$ at infinity} in direction $d\in\R^n$ is defined by
\[
    f^\circ(\infty;d) := \limsup_{x\xrightarrow{\dom f}\infty,\ t\searrow 0}\, \frac{f(x+td)-f(x)}{t}.
\]
\end{definition}

For $d\in\R^n$, let $\Phi_f(d):=\inf\{\alpha\in\R\mid (d,\alpha)\in T(\infty_I;\epi f)\}$ denote the lower-boundary function of the tangent cone $T(\infty_I;\epi f)$, with the convention $\inf\emptyset=+\infty$.

\begin{proposition} \label{prop: clarke deriv epi}
Let $f:\R^n\to\R\cup\{\infty\}$ be a proper, lower semicontinuous, convex function with $\dom f$ unbounded. Then:
\begin{enumerate}[\rm (i)]
    \item $\Phi_f(d)\le f^\circ(\infty;d)$ for every $d\in\R^n$;
    \item if $f$ is Lipschitz at infinity, then the inequality in {\rm(i)} is an equality.
\end{enumerate}
\end{proposition}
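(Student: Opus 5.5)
We prove (i) by showing that every level $\alpha$ strictly above $f^\circ(\infty;d)$ gives a tangent direction $(d,\alpha)\in T(\infty_I;\epi f)$, working directly with the inner-limit definition in Definition~\ref{def: cones at infinity}(i). For (ii) we take any $(d,\alpha)\in T(\infty_I;\epi f)$ and use the Lipschitz property to push the perturbed directions $w_k$ back to $d$, which yields $f^\circ(\infty;d)\le\alpha$.

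\emph{Part (i).} If $f^\circ(\infty;d)=+\infty$ there is nothing to prove. Otherwise fix $\alpha\in\R$ with $\alpha>f^\circ(\infty;d)$; we claim $(d,\alpha)\in T(\infty_I;\epi f)$.

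Take arbitrary $(x_k,r_k)\in\epi f$ with $\|x_k\|\to\infty$ and $t_k\searrow0$. Then $x_k\in\dom f$ and $x_k\to\infty$. By the definition of the $\limsup$ in Definition~\ref{def: clarke deriv infinity}, there is $K$ such that
\[
f(x_k+t_kd)-f(x_k)\le t_k\alpha \qquad\text{for all } k\ge K,
\]
since otherwise a subsequence would give $f^\circ(\infty;d)\ge\alpha$. Because $r_k\ge f(x_k)$, this gives $f(x_k+t_kd)\le r_k+t_k\alpha$, that is, $(x_k,r_k)+t_k(d,\alpha)\in\epi f$ for $k\ge K$.

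We now choose $(w_k,\alpha_k):=(d,\alpha)$ for $k\ge K$ and $(w_k,\alpha_k):=(0,0)$ for $k<K$. This sequence converges to $(d,\alpha)$ and keeps every point $(x_k,r_k)+t_k(w_k,\alpha_k)$ in $\epi f$. Hence $(d,\alpha)\in T(\infty_I;\epi f)$. Taking the infimum over such $\alpha$ gives $\Phi_f(d)\le f^\circ(\infty;d)$, and this also covers the case $f^\circ(\infty;d)=-\infty$.

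\emph{Part (ii).} By (i) it suffices to show $f^\circ(\infty;d)\le\alpha$ for every $(d,\alpha)\in T(\infty_I;\epi f)$; if no such $\alpha$ exists, $\Phi_f(d)=+\infty$ and the equality is immediate. Let $L,\gamma,\rho$ be the constants from Definition~\ref{def: lipschitz at infinity}.

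Choose $x_k\in\dom f$ with $\|x_k\|\to\infty$ and $t_k\searrow0$ realizing the $\limsup$, so that
\[
\frac{f(x_k+t_kd)-f(x_k)}{t_k}\to f^\circ(\infty;d).
\]
If this limit is $-\infty$ the claim is trivial, so assume it is not. Apply the definition of $T(\infty_I;\epi f)$ along the points $(x_k,f(x_k))\in\epi f$. This gives $(w_k,\alpha_k)\to(d,\alpha)$ with
\[
f(x_k+t_kw_k)\le f(x_k)+t_k\alpha_k .
\]

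For large $k$ we have $\|x_k\|\ge\gamma$. Moreover $t_k\|d\|<\rho$ and $t_k\|w_k\|<\rho$, so both $x_k+t_kd$ and $x_k+t_kw_k$ lie in $B(x_k;\rho)$, where $f$ is finite and $L$-Lipschitz. Therefore
\[
f(x_k+t_kd)\le f(x_k+t_kw_k)+Lt_k\|d-w_k\|\le f(x_k)+t_k\bigl(\alpha_k+L\|d-w_k\|\bigr).
\]
Dividing by $t_k$ and letting $k\to\infty$ yields $f^\circ(\infty;d)\le\alpha$. Taking the infimum over admissible $\alpha$ gives $f^\circ(\infty;d)\le\Phi_f(d)$, and equality follows from (i).

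The main obstacle is the step in (ii) where the Lipschitz estimate is applied. It requires the points $x_k+t_kw_k$ and $x_k+t_kd$ to lie in the uniform ball $B(x_k;\rho)$, and this is exactly what Lipschitzness at infinity with $I=\{1,\ldots,n\}$ provides. Without that hypothesis, $w_k$ cannot be replaced by $d$, and only the inequality of (i) survives.
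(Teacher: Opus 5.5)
Your proof is correct and follows essentially the same route as the paper. In (i) you show that every $\alpha>f^\circ(\infty;d)$ gives $(d,\alpha)\in T(\infty_I;\epi f)$ directly from the inner-limit definition, and in (ii) you replace $w_k$ by $d$ using the Lipschitz bound on $B(x_k;\rho)$; your padding with $(0,0)$ for $k<K$ and your choice of a sequence realizing the $\limsup$ only tidy up the paper's ``for large $k$'' and ``taking the supremum over admissible $(x_k,t_k)$'' steps.
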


\begin{proof}
(i) 
If $f^\circ(\infty;d)=+\infty$, there is nothing to prove. If $f^\circ(\infty;d)=-\infty$, the argument below applies to every $\alpha\in\R$, so that $(d,\alpha)\in T(\infty_I;\epi f)$ for all real $\alpha$ and hence $\Phi_f(d)=-\infty$, which again yields the inequality. We may therefore assume that $f^\circ(\infty;d)$ is finite.

Suppose $\alpha>f^\circ(\infty;d)$. By the definition of $\limsup$, there exist $\gamma>0$ and $\varepsilon>0$ such that
\[
    \frac{f(x+td)-f(x)}{t} \le \alpha \qquad\text{whenever } x\in\dom f,\ \|x\|\ge\gamma,\ 0<t\le\varepsilon.
\]
Fix any sequences $\{(x_k,r_k)\}\subset\epi f$ and $\{t_k\}\subset\R_+$ with $\|x_k\|\to\infty$ and $t_k\searrow 0$. For large $k$, $r_k\ge f(x_k)$ and the inequality above gives us $r_k+t_k\alpha\ge f(x_k+t_kd)$. So
\[
	(x_k,r_k) + t_k(d,\alpha) = (x_k+t_kd, r_k+t_k\alpha) \in \epi f \quad \text{for large} ~ k.
\]
Hence $(d,\alpha)\in T(\infty_I;\epi f)$, and thus $\Phi_f(d) \le \alpha$. Letting $\alpha\searrow f^\circ(\infty;d)$ gives us (i).

(ii) For the reverse inequality, take any $(d,\alpha)\in T(\infty_I;\epi f)$ and let $L,\gamma,\rho$ be the constants from Definition~\ref{def: lipschitz at infinity}. Fix arbitrary sequences $\{x_k\}\subset\dom f$ and $\{t_k\}\subset\R_+$ with $\|x_k\|\to\infty$ and $t_k\searrow 0$. There exist $(d_k,\alpha_k)\to(d,\alpha)$ with $(x_k,f(x_k))+t_k(d_k,\alpha_k)\in\epi f$ for all $k$. Equivalently, it follows that $\alpha_k \ge (f(x_k+t_kd_k)-f(x_k))/t_k$ for all $k$. For large $k$ we have $\|x_k\|\ge\gamma$, and both $x_k+t_kd$ and $x_k+t_kd_k$ lie in $B(x_k;\rho)$. The assumed Lipschitz condition then gives us
\[
   \frac{f(x_k+t_kd)-f(x_k)}{t_k} = \frac{f(x_k+t_kd_k)-f(x_k)}{t_k} + \frac{f(x_k+t_kd)-f(x_k+t_kd_k)}{t_k} \le \alpha_k + L\|d_k-d\|.
\]
Taking the supremum over admissible $(x_k,t_k)$, then passing to the limit gives us $f^\circ(\infty;d)\le\alpha$. Since $\alpha$ is arbitrary with $(d,\alpha)\in T(\infty_I;\epi f)$, the proof for (ii) is done.
\end{proof}

The next result identifies the lower-boundary function of the tangent cone $T(\infty_I;\epi f)$ with the support function of $\partial f(\infty)$.

\begin{proposition} \label{prop: clarke deriv support}
Let $f:\R^n\to\R\cup\{\infty\}$ be proper, lower semicontinuous, convex with $\dom f$ unbounded. Then
\[
    \Phi_f(d) =
    \begin{cases}
        \sigma_{\partial f(\infty)}(d), & d\in(\partial^\infty f(\infty))^\circ,\\[2pt]
        +\infty, & \text{otherwise,}
    \end{cases}
\]
with the convention $\sigma_\emptyset\equiv-\infty$.
\end{proposition}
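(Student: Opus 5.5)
The plan is to work entirely with the closed convex cone $T:=T(\infty_I;\epi f)$ and its polar $N:=N(\infty_I;\epi f)$. The goal is to turn the membership $(d,\alpha)\in T$ into a family of linear inequalities indexed by $N$, and then read off $\Phi_f(d)$. By Proposition~\ref{prop: basic properties}(i)--(ii), $T$ is a closed convex cone and $T=N^\circ$. Hence
\[
(d,\alpha)\in T \iff \langle u,d\rangle+\beta\alpha\le 0 \quad\text{for all } (u,\beta)\in N .
\]

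The first step is to pin down the sign of the last coordinate of elements of $N$. As in the proof of Proposition~\ref{prop: max rule}, the vector $e=(0,1)$ lies in $(\epi f)^\infty$. Proposition~\ref{prop: recession inclusion} then gives $e\in T$, so every $(u,\beta)\in N$ satisfies $\beta\le 0$. I then split $N$ according to whether $\beta<0$ or $\beta=0$.
\begin{itemize}
\item If $\beta<0$, the cone property gives $(u,\beta)=|\beta|\,(u/|\beta|,-1)$ with $u/|\beta|\in\partial f(\infty)$.
\item If $\beta=0$, then $u\in\partial^\infty f(\infty)$.
\end{itemize}
Conversely, every $(w,-1)$ with $w\in\partial f(\infty)$ and every $(w,0)$ with $w\in\partial^\infty f(\infty)$ lies in $N$, by Definition~\ref{def: subdiff infinity}. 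Since the defining inequality is positively homogeneous in $(u,\beta)$, it suffices to test it on these generators. This yields the equivalence
\[
(d,\alpha)\in T \iff \Big[\langle w,d\rangle\le\alpha\ \ \forall w\in\partial f(\infty)\Big]\ \text{and}\ \Big[\langle w,d\rangle\le 0\ \ \forall w\in\partial^\infty f(\infty)\Big].
\]

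The final step is the case analysis for $\Phi_f(d)=\inf\{\alpha\in\R\mid (d,\alpha)\in T\}$.
\begin{itemize}
\item If $d\notin(\partial^\infty f(\infty))^\circ$, the second bracket fails for every $\alpha$. The admissible set is then empty, so $\Phi_f(d)=+\infty$.
\item If $d\in(\partial^\infty f(\infty))^\circ$, the admissible set is $\{\alpha\in\R\mid \alpha\ge\sigma_{\partial f(\infty)}(d)\}$.
\end{itemize}
In the second case there are three sub-cases:
\begin{itemize}
\item When $\sigma_{\partial f(\infty)}(d)$ is finite, the infimum equals it.
\item When $\sigma_{\partial f(\infty)}(d)=+\infty$, the set is empty and both sides equal $+\infty$.
\item When $\partial f(\infty)=\emptyset$, every real $\alpha$ is admissible, so $\Phi_f(d)=-\infty=\sigma_\emptyset(d)$, matching the stated convention.
\end{itemize}

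I do not expect a genuine obstacle. The only points needing care are the following.
\begin{itemize}
\item Justifying $\beta\le0$ on $N$ via $e\in T$. Without it, elements with $\beta>0$ would not be captured by $\partial f(\infty)$ or $\partial^\infty f(\infty)$.
\item Handling the degenerate values $\pm\infty$ correctly, so that the formula holds with the conventions $\inf\emptyset=+\infty$ and $\sigma_\emptyset\equiv-\infty$.
\end{itemize}
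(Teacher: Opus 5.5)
Your proposal is correct and follows essentially the same route as the paper: show $(0,1)\in T(\infty_I;\epi f)$ so that every element of $N(\infty_I;\epi f)$ has nonpositive last coordinate, split this cone into its $\partial^\infty f(\infty)$ part and its $\partial f(\infty)$ part, dualize via $T(\infty_I;\epi f)=N(\infty_I;\epi f)^\circ$, and take the infimum over $\alpha$. Your explicit handling of the degenerate cases ($\sigma_{\partial f(\infty)}(d)=+\infty$ and $\partial f(\infty)=\emptyset$) is a bit more careful than the paper's write-up, but the argument is the same.
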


\begin{proof}
 Fix $d\in\R^n$. As shown in Proposition \ref{prop: max rule}, we have $(0,1) \in T(\infty_I;\epi f)$. So every $(u,c) \in N(\infty_I;\epi f)$ has $c\le 0$. This gives us the following presentation
\[
    N(\infty_I;\epi f)=\{(u,0)\mid u\in\partial^\infty f(\infty)\}\cup\{t(v,-1)\mid t>0,\ v\in\partial f(\infty)\}.
\]
Hence $(d,\alpha) \in T(\infty_I;\epi f)$ iff $\langle u,d\rangle+c\alpha\le 0$ for every $(u,c) \in N(\infty_I;\epi f)$. By the presentation above, this holds iff $\langle u,d\rangle\le 0$ for all $u\in\partial^\infty f(\infty)$ and $\alpha\ge\langle v,d\rangle$ for all $v\in\partial f(\infty)$. Equivalently, it follows that $d\in(\partial^\infty f(\infty))^\circ$ and $\alpha\ge\sigma_{\partial f(\infty)}(d)$. Taking the infimum over such $\alpha$, we conclude that $\Phi_f(d)=\sigma_{\partial f(\infty)}(d)$ if $d\in(\partial^\infty f(\infty))^\circ$, and $\Phi_f(d)=+\infty$ otherwise.
\end{proof}

\begin{remark} \label{rem: empty subdiff convention}
When $\partial f(\infty)=\emptyset$, the convention $\sigma_\emptyset\equiv-\infty$ gives $\Phi_f(d)=-\infty$ for every $d\in(\partial^\infty f(\infty))^\circ$. Thus $\Phi_f$ is improper.
\end{remark}

\begin{corollary} \label{rem: classical subdiff at infinity}
Let $f:\R^n\to\R$ be Lipschitz at infinity with constant $L$. Then:
\begin{enumerate}[\rm (i)]
    \item $|f^\circ(\infty;d)|\le L\|d\|$ for all $d\in\R^n$;
    \item $\partial^\infty f(\infty)=\{0\}$, $\partial f(\infty)$ is a nonempty compact convex subset of $L\,\overline{\mathbb{B}}$, and
    \[
        f^\circ(\infty;d)=\max\bigl\{\langle v,d\rangle\mid v\in\partial f(\infty)\bigr\}\quad \text{for all } d\in\R^n.
    \]
\end{enumerate}
\end{corollary}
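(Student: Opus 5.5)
The plan is to combine Proposition~\ref{prop: clarke deriv epi} with Proposition~\ref{prop: clarke deriv support}. Since $f$ is finite-valued and Lipschitz at infinity, Proposition~\ref{prop: clarke deriv epi}(ii) gives $f^\circ(\infty;d)=\Phi_f(d)$ for every $d\in\R^n$. So everything reduces to bounding the difference quotients directly and then reading off the geometry of $N(\infty_I;\epi f)$ through $\Phi_f$.

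\emph{Step 1: the bound (i).} Let $L,\gamma,\rho$ be the constants of Definition~\ref{def: lipschitz at infinity}. For $\|x\|\ge\gamma$ and $0<t<\rho/(\|d\|+1)$, the point $x+td$ lies in $B(x;\rho)$. Hence $|f(x+td)-f(x)|/t\le L\|d\|$. Taking the $\limsup$ gives $-L\|d\|\le f^\circ(\infty;d)\le L\|d\|$, so $f^\circ(\infty;\cdot)$ is finite everywhere.

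\emph{Step 2: the singular subdifferential is trivial.} By Step~1 and Proposition~\ref{prop: clarke deriv epi}(ii), $\Phi_f(d)$ is finite, hence $<+\infty$, for every $d$. By Proposition~\ref{prop: clarke deriv support}, this forces $d\in(\partial^\infty f(\infty))^\circ$ for all $d\in\R^n$. Thus $(\partial^\infty f(\infty))^\circ=\R^n$. Since $\partial^\infty f(\infty)$ is a closed convex cone (Proposition~\ref{prop: basic subdiff at infinity}(i)), the bipolar theorem (Lemma~\ref{lem: polarity rules}(i)) gives $\partial^\infty f(\infty)=(\R^n)^\circ=\{0\}$.

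\emph{Step 3: the subdifferential is nonempty, compact and lies in $L\,\overline{\mathbb{B}}$.} Proposition~\ref{prop: clarke deriv support} now yields $\Phi_f=\sigma_{\partial f(\infty)}$ on all of $\R^n$. If $\partial f(\infty)$ were empty, the convention $\sigma_\emptyset\equiv-\infty$ would give $\Phi_f\equiv-\infty$, contradicting the finiteness from Step~1; so $\partial f(\infty)\neq\emptyset$. For $v\in\partial f(\infty)$ we have $\langle v,d\rangle\le\sigma_{\partial f(\infty)}(d)=f^\circ(\infty;d)\le L\|d\|$ for all $d$. Choosing $d=v$ gives $\|v\|\le L$. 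Hence $\partial f(\infty)\subset L\,\overline{\mathbb{B}}$, and being closed and convex by Proposition~\ref{prop: basic subdiff at infinity}(i), it is compact.

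\emph{Step 4: the max formula.} Combining the above, $f^\circ(\infty;d)=\Phi_f(d)=\sigma_{\partial f(\infty)}(d)$. Because $\partial f(\infty)$ is nonempty and compact, the supremum of the continuous linear function $\langle\cdot,d\rangle$ over it is attained, so it is a maximum. This gives (ii).

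\emph{Main obstacle.} The delicate point is Step~2 together with the nonemptiness argument. One must use that $\Phi_f$ takes only finite values, not merely values $<+\infty$, to rule out the improper case of Remark~\ref{rem: empty subdiff convention}. One should also check that the Lipschitz-at-infinity hypothesis for a finite-valued $f$ indeed supplies the uniform radius $\rho$ needed in Step~1.
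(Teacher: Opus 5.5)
Your proof is correct. Part (i), the nonemptiness of $\partial f(\infty)$, the bound $\|v\|\le L$, and the max formula follow the paper's argument. You spell out the nonemptiness (ruling out $\sigma_\emptyset\equiv-\infty$) and the bound (choosing $d=v$), which the paper compresses into one sentence.

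The genuine difference is how you obtain $\partial^\infty f(\infty)=\{0\}$. The paper invokes Proposition~\ref{prop: geometry of singular}, $\partial^\infty f(\infty)\subset(\dom f^\infty)^\circ$, together with the assertion $\dom f^\infty=\R^n$. You instead combine the finiteness of $\Phi_f=f^\circ(\infty;\cdot)$ from part (i) with Proposition~\ref{prop: clarke deriv support}, which places every $d$ in $(\partial^\infty f(\infty))^\circ$.

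The paper's route ties the conclusion to recession geometry. However, $\dom f^\infty=\R^n$ does not follow from finite-valuedness alone: for $f(x)=x^2$ one has $f^\infty(d)=+\infty$ for $d\neq 0$. It has to be derived from Lipschitzness at infinity. For instance, the right derivative of the convex function $t\mapsto f(x_0+td)$ is at most $L\|d\|$ once $\|x_0+td\|\ge\gamma$. Monotonicity of convex difference quotients then gives $f^\infty(d)\le L\|d\|$. The paper leaves this step implicit.

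Your route stays entirely within the Clarke-derivative machinery and needs nothing beyond part (i), so it is the more self-contained of the two. It also shows that $\partial^\infty f(\infty)=\{0\}$ and that $\partial f(\infty)$ is nonempty and compact whenever $\Phi_f$ is finite everywhere.
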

\begin{proof}
For (i), the case $d=0$ is immediate, so assume $d\ne 0$. Let $\gamma,\rho>0$ be the constants accompanying $L$ in Definition~\ref{def: lipschitz at infinity}. For $\|x\|\ge\gamma$ and $0<t<\rho/\|d\|$, both $x$ and $x+td$ lie in $B(x;\rho)$, so the Lipschitz estimate gives
\[
    \left|\frac{f(x+td)-f(x)}{t}\right|\le \frac{L\,\|td\|}{t}=L\|d\|.
\]
Taking the limsup yields that $|f^\circ(\infty;d)|\le L\|d\|$. 

For (ii), since $\dom f^\infty=\R^n$, Proposition~\ref{prop: geometry of singular} gives us $\partial^\infty f(\infty)\subset(\dom f^\infty)^\circ=\{0\}$, so $\partial^\infty f(\infty)=\{0\}$ and $\Phi_f=\sigma_{\partial f(\infty)}$ on $\R^n$. By Proposition~\ref{prop: clarke deriv epi}(ii), we have $\Phi_f=f^\circ(\infty;\cdot)$. Furthermore, it follows from part (i) that $\sigma_{\partial f(\infty)}=f^\circ(\infty;\cdot)$ is finite and bounded by $L\|\cdot\|$. So $\partial f(\infty)$ is a nonempty compact convex subset of $L\,\overline{\mathbb{B}}$ and the supremum is attained as a maximum.
\end{proof}

The next proposition compares $f^\circ(\infty;\cdot)$ with the recession function.

\begin{proposition} \label{prop: clarke deriv vs recession}
Let $f:\R^n\to\R\cup\{\infty\}$ be a proper, lower semicontinuous, convex function with $\dom f$ unbounded. Then
\[
    f^\circ(\infty;d)\le f^\infty(d)\qquad\text{for every } d\in\R^n.
\]
\end{proposition}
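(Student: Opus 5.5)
The plan is to bound every difference quotient in the definition of $f^\circ(\infty;d)$ by $f^\infty(d)$, uniformly in the base point $x\in\dom f$ and the step $t>0$. The inequality then follows by taking the $\limsup$. The only ingredient is the monotonicity of difference quotients of a convex function along a ray, together with the fact from the Preliminaries that $f^\infty(d)$ is independent of the base point.

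First, fix $d\in\R^n$. If $f^\infty(d)=+\infty$ there is nothing to prove, so assume $f^\infty(d)<\infty$.

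Second, fix $x\in\dom f$ and consider $\varphi(t):=f(x+td)$ for $t\ge 0$. This is a convex function of one variable with $\varphi(0)=f(x)$ finite. By the standard three-slope inequality, the quotient
\[
q_x(t):=\frac{f(x+td)-f(x)}{t}
\]
is nondecreasing in $t>0$, with values in $\R\cup\{\infty\}$. Hence
\[
q_x(t)\le\lim_{s\to\infty}q_x(s)=\sup_{s>0}q_x(s).
\]
By the definition of the recession function recalled in the Preliminaries (\cite[Theorem~8.5]{rockafellar-convex}), with $x_0:=x\in\dom f$ allowed as the base point, this limit equals $f^\infty(d)$. Therefore
\[
\frac{f(x+td)-f(x)}{t}\le f^\infty(d)\qquad\text{for all }x\in\dom f,\ t>0.
\]
In particular, since $f^\infty(d)<\infty$, we get $x+td\in\dom f$ for all such $x$ and $t$, so no $+\infty$ quotients occur.

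Third, take the $\limsup$ over $x\xrightarrow{\dom f}\infty$ and $t\searrow0$ in Definition~\ref{def: clarke deriv infinity}. Since the bound holds uniformly for every admissible pair $(x,t)$, it passes to the $\limsup$ and gives $f^\circ(\infty;d)\le f^\infty(d)$.

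There is no real obstacle. The only point needing care is justifying that the monotone limit of $q_x(t)$ equals $f^\infty(d)$ for every base point $x\in\dom f$, not just for a single fixed $x_0$; this is exactly the base-point independence for proper lower semicontinuous convex $f$ already quoted in the Preliminaries.
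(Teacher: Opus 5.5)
Your proof is correct and takes essentially the same approach as the paper: both bound every difference quotient $\frac{f(x+td)-f(x)}{t}$ by $f^\infty(d)$, uniformly over $x\in\dom f$ and $t>0$, using \cite[Theorem~8.5]{rockafellar-convex}, and then pass to the $\limsup$. The only difference is that you derive this bound from the monotonicity of slopes along a ray and the base-point independence of $f^\infty$, whereas the paper cites the inequality $f(x+td)-f(x)\le t f^\infty(d)$ directly.
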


\begin{proof}
Fix $d\in\dom f^\infty$. For every $x\in\dom f$ and every $t>0$, it follows from \cite[Theorem~8.5]{rockafellar-convex} that 
\[
	f(x+td) - f(x) \leq tf^\infty(d).
\]
Dividing by $t$, and taking the limsup give us $f^\circ(\infty;d)\le f^\infty(d)$. For $d\notin\dom f^\infty$, $f^\infty(d)=+\infty$ and the inequality is trivial.
\end{proof}

We close this subsection with two computations and a counterexample showing that the equalities above genuinely require Lipschitzness at infinity.

\begin{example}
	Given a sublinear function $f:\R^n\to\R$. Then, $f$ is globally Lipschitz, so Theorem~\ref{prop: geometry of subdifferential} and Corollary~\ref{rem: classical subdiff at infinity} give us
\[
	f^\circ(\infty;\cdot)=\sigma_{\partial f(\infty)}=\sigma_{\partial f(0)}=f.
\]
Hence, the directional derivative at infinity of a sublinear function recovers itself.
\end{example}

\begin{example}
	Given a nonempty unbounded polyhedral convex set $C\subset\R^n$, define $f:=\delta_C$. On the one hand, Theorem~\ref{prop: polyhedral cones at infinity} makes $T(\infty;C)$ explicit, and a direct computation gives us
\[
    f^\circ(\infty;\cdot) = \delta_{T(\infty;C)}.
\]	
On the other hand, by Example~\ref{ex: indicator}, we have $\partial f(\infty)=\partial^\infty f(\infty)=N(\infty;C)$. It then follows from Proposition~\ref{prop: clarke deriv support} that $\Phi_f(d)=\sigma_{N(\infty;C)}(d)=0$ for $d \in T(\infty;C)$ and $+\infty$ otherwise. Hence,
\[
	\Phi_f=\delta_{T(\infty;C)}=f^\circ(\infty;\cdot).
\]
\end{example}

\begin{example} \label{ex: exp counterexample}
	On $\R$, let $f(x):=x^2$, which is not Lipschitz at infinity. A direct computation gives us
\[
    T(\infty_I;\epi f) = \{0\}\times\R, \qquad f^\circ(\infty;d) = \begin{cases}
    0		&	d = 0, \\
    +\infty	&	d \neq 0.
\end{cases}
\]
Hence $\Phi_f(0)=\inf\{\alpha\mid (0,\alpha)\in T(\infty_I;\epi f)\}=-\infty$, so $\Phi_f(0)<f^\circ(\infty;0)$. Thus the inequality in Proposition~\ref{prop: clarke deriv epi}(i) is strict, and Lipschitzness at infinity cannot be dropped from part~(ii).
\end{example}

\section{A Multiplier Rule for Convex Programs at Infinity} \label{sec: optimality at infinity}

In this section we illustrate how the convex calculus at infinity of Sections~\ref{sec: tangent and normal cones at infinity} and~\ref{sec: subdifferential at infinity} yields a multiplier-type rule for convex programs over unbounded feasible sets, decomposing asymptotic stationarity into an objective part and a feasible-set part through the sum rule at infinity. Throughout, $f:\R^n\to\R\cup\{\infty\}$ is a proper, lower semicontinuous, convex function with $\dom f$ unbounded, and $C\subset\R^n$ is a nonempty closed convex set such that $C\cap\dom f$ is unbounded. We consider the convex program
\begin{equation} \label{eq: convex program}
    \inf\{f(x)\mid x\in C\}.
\end{equation}
Writing $\bar f := f + \delta_C$, the program~\eqref{eq: convex program} is equivalent to minimizing $\bar f$ over $\R^n$.

\subsection{A Multiplier Rule at Infinity}

By Example~\ref{ex: indicator}, we have $\partial\delta_C(\infty)=\partial^\infty\delta_C(\infty)=N(\infty;C)$. Applying the sum rule of Proposition~\ref{prop: sum rule same-variable} to $f$ and $g:=\delta_C$ yields the following multiplier-type rule.

\begin{theorem} \label{thm: multiplier rule at infinity}
Under the asymptotic normal-separation condition
\begin{equation} \label{eq: qualification convex programming}
    \partial^\infty f(\infty) \cap \bigl(-N(\infty;C)\bigr) = \{0\},
\end{equation}
we have
\[
    \partial(f+\delta_C)(\infty) \subset \partial f(\infty) + N(\infty;C).
\]
In particular, if $0\in\partial(f+\delta_C)(\infty)$, then there exist
\[
    u\in\partial f(\infty),\qquad v\in N(\infty;C), \qquad u+v=0.
\]
\end{theorem}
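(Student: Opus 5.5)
The plan is to apply the sum rule of Proposition~\ref{prop: sum rule same-variable} with $g:=\delta_C$, after checking its hypotheses. First, $g$ is proper, lower semicontinuous and convex because $C$ is nonempty, closed and convex. Second, $\dom(f+\delta_C)=\dom f\cap C$ is unbounded by the standing assumption of this section.

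The only hypothesis that needs translation is \eqref{eq: singular separation}. For this we use Example~\ref{ex: indicator}, which gives $\partial^\infty\delta_C(\infty)=N(\infty;C)$. One point needs care: Example~\ref{ex: indicator} uses the full-coordinate normal cone $N(\infty;C)$, and this requires $C$ itself to be unbounded. That holds, since $C\supset C\cap\dom f$, which is unbounded.

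With this identification, \eqref{eq: singular separation} for the pair $(f,\delta_C)$ reads $\partial^\infty f(\infty)\cap(-N(\infty;C))=\{0\}$. This is exactly the qualification \eqref{eq: qualification convex programming}. Proposition~\ref{prop: sum rule same-variable} then yields
\[
\partial(f+\delta_C)(\infty)\subset\partial f(\infty)+\partial\delta_C(\infty).
\]
Substituting $\partial\delta_C(\infty)=N(\infty;C)$, again from Example~\ref{ex: indicator}, gives the main inclusion.

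For the ``in particular'' part, take $0\in\partial(f+\delta_C)(\infty)$. By the inclusion, $0=u+v$ for some $u\in\partial f(\infty)$ and $v\in N(\infty;C)$, which is the stated multiplier relation.

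I do not expect a genuine obstacle. The only step requiring attention is confirming that Example~\ref{ex: indicator} applies, which needs $C$ unbounded so that the cones at infinity of $\epi\delta_C=C\times\R_+$ are defined. Once that is checked, the rest is a direct substitution into the earlier results.
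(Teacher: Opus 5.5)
Your proposal is correct and matches the paper's proof: you apply the sum rule of Proposition~\ref{prop: sum rule same-variable} with $g=\delta_C$, and you use Example~\ref{ex: indicator} to identify both $\partial^\infty\delta_C(\infty)$ and $\partial\delta_C(\infty)$ with $N(\infty;C)$, so that \eqref{eq: singular separation} turns into \eqref{eq: qualification convex programming}. Your explicit checks that $\dom(f+\delta_C)=C\cap\dom f$ and $C$ itself are unbounded are hypotheses the paper leaves implicit in the standing assumptions of the section.
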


\begin{proof}
The qualification \eqref{eq: singular separation} of Proposition~\ref{prop: sum rule same-variable} reduces to \eqref{eq: qualification convex programming} since $\partial^\infty \delta_C(\infty)=N(\infty;C)$. The conclusion follows since $\partial \delta_C(\infty)=N(\infty;C)$ as well.
\end{proof}

Theorem~\ref{thm: multiplier rule at infinity} should be understood as a \emph{decomposition result for asymptotic stationarity}: once $0\in\partial(f+\delta_C)(\infty)$ is present, the theorem separates the objective and constraint contributions at infinity into a subgradient of $f$ at infinity and a normal direction of $C$ at infinity. The condition~\eqref{eq: qualification convex programming} is a normal-separation condition used to justify the sum rule; it is not itself an attainment criterion, and the theorem does not assert that such stationarity must hold in every nonattainment situation. The condition~\eqref{eq: qualification convex programming} is the natural asymptotic counterpart of the basic constraint qualification $\partial^\infty f(\bar x)\cap(-N(\bar x;C))=\{0\}$ used in convex programming at finite points; see, e.g., \cite[Chapter~4]{nambook}.

Although Theorem~\ref{thm: multiplier rule at infinity} takes asymptotic stationarity as input, its contrapositive yields an attainment criterion: if $0\notin\partial(f+\delta_C)(\infty)$, then the convex program~\eqref{eq: convex program} attains its infimum, provided the latter is finite.

\begin{theorem} \label{thm: attainment infinity}
Under the standing assumptions of this section, assume $\inf_{x\in C}f(x)>-\infty$ and
\[
    0\notin \partial(f+\delta_C)(\infty).
\]
Then $\inf_{x\in C}f(x)$ is attained, i.e., there exists $x^*\in C\cap\dom f$ with $f(x^*)=\inf_{x\in C}f(x)$.
\end{theorem}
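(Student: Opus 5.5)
The plan is to argue by contraposition: assuming the infimum $\mu:=\inf_{x\in C}f(x)\in\R$ is \emph{not} attained, we show $0\in\partial(f+\delta_C)(\infty)$. Set $g:=\bar f=f+\delta_C$. Then $g$ is proper, lower semicontinuous and convex, and $\dom g=C\cap\dom f$ is unbounded, so the constructions of Section~\ref{sec: subdifferential at infinity} apply to $g$. The idea is to produce points escaping to infinity that carry subgradients of $g$ tending to $0$. We then conclude with the closedness of $\partial g(\cdot)$ at infinity from Proposition~\ref{prop: basic subdiff at infinity}(ii).

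\emph{Step 1: minimizing sequences escape to infinity.} Take $x_k\in\dom g$ with $g(x_k)\le\mu+\varepsilon_k$, where $\varepsilon_k\searrow0$. We claim $\|x_k\|\to\infty$. Otherwise some subsequence is bounded and has a cluster point $\bar x$. Lower semicontinuity gives $g(\bar x)\le\liminf g(x_k)=\mu$, so $\bar x\in C\cap\dom f$ attains the infimum, a contradiction. The same argument applies to any sequence $y_k$ with $g(y_k)\to\mu$, which is how we will use it in Step~3.

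\emph{Step 2: approximate stationarity via Ekeland.} Apply Ekeland's variational principle to $g$, which is lsc, bounded below by $\mu$, and satisfies $g(x_k)\le\inf g+\varepsilon_k$. Use the parameter $\lambda_k:=\sqrt{\varepsilon_k}$. This yields $x_k'$ with the following three properties:
\begin{enumerate}[\rm (a)]
    \item $g(x_k')\le g(x_k)$;
    \item $\|x_k'-x_k\|\le\sqrt{\varepsilon_k}$;
    \item $x_k'$ minimizes $g+\sqrt{\varepsilon_k}\,\|\cdot-x_k'\|$ over $\R^n$.
\end{enumerate}
The norm term is convex and finite everywhere. The convex subdifferential sum rule (Moreau--Rockafellar) therefore applies and gives $0\in\partial g(x_k')+\sqrt{\varepsilon_k}\,\cl\mathbb{B}$. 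So there is $u_k\in\partial g(x_k')$ with $\|u_k\|\le\sqrt{\varepsilon_k}\to0$.

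\emph{Step 3: pass to infinity.} From property (b) and Step~1, $\|x_k'\|\ge\|x_k\|-\sqrt{\varepsilon_k}\to\infty$. From property (a), $g(x_k')\to\mu$, which is consistent with Step~1 applied to $x_k'$. We now have $\|x_k'\|\to\infty$, $u_k\in\partial g(x_k')$ and $u_k\to0$. Proposition~\ref{prop: basic subdiff at infinity}(ii), applied to $g$, then yields $0\in\partial g(\infty)=\partial(f+\delta_C)(\infty)$. This contradicts the hypothesis, so the infimum is attained at some $x^*\in C\cap\dom f$.

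The only delicate point is Step~2: we must make the perturbation slope vanish while still keeping $x_k'$ close enough to $x_k$ that it escapes to infinity. The choice $\lambda_k=\sqrt{\varepsilon_k}$ achieves both, since both the slope and the displacement equal $\sqrt{\varepsilon_k}$. No qualification condition is needed here, because the norm perturbation is everywhere finite and continuous.
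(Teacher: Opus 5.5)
Your proof is correct. It has the same overall structure as the paper's: argue by contradiction, produce points of $\dom\bar f$ that escape to infinity and carry subgradients tending to $0$, and conclude by the closedness of $\partial\bar f(\cdot)$ at infinity. The difference is in how those points are produced.

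The paper uses neither a minimizing sequence nor Ekeland's principle. It takes the proximal points $y_\lambda$ minimizing $\bar f+\frac{1}{2\lambda}\|\cdot-y_0\|^2$ around a fixed $y_0\in C\cap\dom f$ and lets $\lambda\to\infty$. The optimality condition gives the explicit subgradient $u_\lambda=(y_0-y_\lambda)/\lambda\in\partial\bar f(y_\lambda)$, with $\|u_\lambda\|^2\le 2(\bar f(y_0)-\bar\alpha)/\lambda$. Escape to infinity is then obtained indirectly: first one shows $\bar f(y_\lambda)\to\bar\alpha$, and then non-attainment together with lower semicontinuity excludes bounded subsequences.

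Your Step~2 is precisely the Br{\o}ndsted--Rockafellar theorem applied to $0\in\partial_{\varepsilon_k}g(x_k)$, which is the usual convex consequence of Ekeland's principle. Its advantage is that escape to infinity comes directly from the minimizing sequence through the bound $\|x_k'-x_k\|\le\sqrt{\varepsilon_k}$, with no separate argument that the new points are minimizing. It also shows that every minimizing sequence is shadowed by approximately stationary points, which is slightly more informative.

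The paper's route needs only the existence of a minimizer of a strongly convex lower semicontinuous function and its first-order condition, so it avoids Ekeland altogether. It also yields a single parametrized path with an explicit decay rate for the subgradients.
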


\begin{proof}
Set $\bar f := f+\delta_C$ and $\bar\alpha:=\inf\bar f\in\R$. The function $\bar f$ is proper, lower semicontinuous, convex with $\bar\alpha>-\infty$; in particular, $\dom\bar f=C\cap\dom f$ is nonempty, so fix any $y_0\in C\cap\dom f$, for which $\bar f(y_0)<\infty$. For each $\lambda>0$, the proximal mapping centered at $y_0$,
\[
    y_\lambda := \arg\min_{y\in\R^n}\Bigl\{\bar f(y) + \tfrac{1}{2\lambda}\|y-y_0\|^2\Bigr\},
\]
is well-defined and single-valued, and the first-order condition gives us
\[
    u_\lambda := -\tfrac{y_\lambda-y_0}{\lambda} \in \partial\bar f(y_\lambda).
\]
Taking $y=y_0$ in the defining infimum yields $\bar f(y_\lambda) + \tfrac{1}{2\lambda}\|y_\lambda-y_0\|^2 \le \bar f(y_0)$, hence
\[
    \tfrac{1}{2\lambda}\|y_\lambda-y_0\|^2 \le \bar f(y_0)-\bar f(y_\lambda) \le \bar f(y_0)-\bar\alpha,
\]
so $\|u_\lambda\|^2 = \|y_\lambda-y_0\|^2/\lambda^2 \le 2(\bar f(y_0)-\bar\alpha)/\lambda \to 0$ as $\lambda\to\infty$. Moreover, for any minimizing sequence $(y_k^*)$ with $\bar f(y_k^*)\to\bar\alpha$, the bound $\bar f(y_\lambda)\le \bar f(y_k^*) + \tfrac{1}{2\lambda}\|y_k^*-y_0\|^2$ gives us $\limsup_{\lambda\to\infty}\bar f(y_\lambda)\le\bar\alpha$, while $\bar f(y_\lambda)\ge\bar\alpha$; hence $\bar f(y_\lambda)\to\bar\alpha$.

Suppose to the contrary that $\bar\alpha$ is not attained. Lower semicontinuity of $\bar f$ rules out bounded subsequences of $(y_\lambda)$ — any cluster point would be a minimizer. Hence $\|y_\lambda\|\to\infty$ as $\lambda\to\infty$. Since $u_\lambda\in\partial\bar f(y_\lambda)$ gives us $(u_\lambda,-1)\in N\big((y_\lambda,\bar f(y_\lambda));\epi\bar f\big)$, and $\|\pi_I(y_\lambda,\bar f(y_\lambda))\|=\|y_\lambda\|\to\infty$ with $(u_\lambda,-1)\to(0,-1)$, the cone closedness of Proposition~\ref{prop: basic properties}(iii) applied to $\epi\bar f$ yields $(0,-1)\in N(\infty_I;\epi\bar f)$, i.e., $0\in\partial\bar f(\infty)$, contradicting the hypothesis.
\end{proof}

Theorem~\ref{thm: attainment infinity} complements the recession-cone criteria for attainment in classical convex analysis (e.g.,~\cite[Theorem~27.3]{rockafellar-convex}): the absence of asymptotic stationarity, captured by the convex set $\partial(f+\delta_C)(\infty)$, suffices for attainment. The condition $0\notin\partial(f+\delta_C)(\infty)$ is verifiable from the explicit calculus of Section~4 in many cases, e.g.\ via the sum rule of Proposition~\ref{prop: sum rule same-variable} when the qualification~\eqref{eq: qualification convex programming} holds.

The next proposition provides a sufficient condition for the qualification~\eqref{eq: qualification convex programming}, formulated entirely in terms of recession data of $f$ and $C$. It can be viewed as an asymptotic analog of the classical Slater regularity condition.

\begin{proposition} \label{thm: asymptotic Slater}
Assume that
\begin{equation}\label{eq: asymptotic Slater}
    \inte(C^\infty) \cap \dom f^\infty \neq \emptyset.
\end{equation}
Then the qualification~\eqref{eq: qualification convex programming} holds.
\end{proposition}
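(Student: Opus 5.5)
Fix $d\in\inte(C^\infty)\cap\dom f^\infty$ and take any $u\in\partial^\infty f(\infty)$ with $-u\in N(\infty;C)$. The plan is to pair $u$ with $d$ and with nearby directions and show $u=0$. Two inclusions proved earlier supply the needed sign information.

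First, Proposition~\ref{prop: geometry of singular} gives $\partial^\infty f(\infty)\subset(\dom f^\infty)^\circ$. Since $d\in\dom f^\infty$, this yields $\langle u,d\rangle\le 0$.

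Second, Proposition~\ref{prop: recession inclusion} applied to the closed convex set $C$ with $I=\{1,\ldots,n\}$ gives $N(\infty;C)\subset(C^\infty)^\circ$. Hence $\langle -u,w\rangle\le 0$, i.e. $\langle u,w\rangle\ge 0$, for every $w\in C^\infty$. Because $d\in\inte(C^\infty)$, there is $\varepsilon>0$ with $d+\varepsilon e\in C^\infty$ for every $e$ with $\|e\|\le 1$. Therefore
\[
\langle u,d\rangle+\varepsilon\langle u,e\rangle\ge 0\quad\text{for all } \|e\|\le 1 .
\]
Choosing $e=-u/\|u\|$ when $u\neq 0$ gives $\langle u,d\rangle\ge\varepsilon\|u\|$.

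Combining the two bounds, $0\ge\langle u,d\rangle\ge\varepsilon\|u\|$, so $u=0$. This proves $\partial^\infty f(\infty)\cap(-N(\infty;C))=\{0\}$, which is exactly \eqref{eq: qualification convex programming}. The reverse inclusion $\{0\}\subset$ the intersection holds trivially because both sets are cones.

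The only step that needs care is the bookkeeping of polar signs. Both sets involved are polar cones, and $-N(\infty;C)$ is contained in $-(C^\infty)^\circ$, the dual cone of $C^\infty$. Nothing else is delicate: the standing assumptions make $C$ closed convex and $f$ proper, lower semicontinuous and convex with unbounded domain, so both earlier propositions apply directly. Nonemptiness of $\inte(C^\infty)$, which \eqref{eq: asymptotic Slater} guarantees, is essential, since it is what produces the strict margin $\varepsilon$.
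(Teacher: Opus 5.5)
Your proof is correct and follows essentially the same route as the paper: it combines $\partial^\infty f(\infty)\subset(\dom f^\infty)^\circ$ with $N(\infty;C)\subset(C^\infty)^\circ$ from Proposition~\ref{prop: recession inclusion}, then uses a ball around $d\in\inte(C^\infty)$ with the direction $-u/\|u\|$ to force $u=0$. The only difference is cosmetic: you write the margin directly as $0\ge\langle u,d\rangle\ge\varepsilon\|u\|$, while the paper argues by contradiction with $\delta=-(\varepsilon/2)u/\|u\|$.
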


\begin{proof}
Let $d_0\in\inte(C^\infty)\cap\dom f^\infty$. Take any $u\in\partial^\infty f(\infty)\cap(-N(\infty;C))$; we show that $u=0$.

By Proposition~\ref{prop: geometry of singular}, $u\in(\dom f^\infty)^\circ$, so $\langle u,d_0\rangle\le 0$ since $d_0\in\dom f^\infty$.

Since $-u\in N(\infty;C)=(T(\infty;C))^\circ$ and $C^\infty\subset T(\infty;C)$ by Proposition~\ref{prop: recession inclusion}, we have $\langle u,w\rangle\ge 0$ for every $w\in C^\infty$.

Since $d_0\in\inte(C^\infty)$, there exists $\varepsilon>0$ such that $d_0+\delta\in C^\infty$ for every $\delta$ with $\|\delta\|<\varepsilon$. Hence $\langle u,d_0+\delta\rangle\ge 0$, equivalently,
\[
    \langle u,\delta\rangle\ge -\langle u,d_0\rangle\ge 0 \quad\text{for every } \|\delta\|<\varepsilon.
\]
If $u\neq 0$, the choice $\delta=-(\varepsilon/2)u/\|u\|$ yields $\langle u,\delta\rangle=-\varepsilon\|u\|/2<0$, contradicting the inequality above. Therefore $u=0$.
\end{proof}

\begin{remark}
The condition $\inte(C^\infty)\cap\dom f^\infty\neq\emptyset$ is only a sufficient condition for~\eqref{eq: qualification convex programming}. It is deliberately stronger than necessary and may fail for lower-dimensional recession cones $C^\infty$ even when the normal-separation condition~\eqref{eq: qualification convex programming} still holds.
\end{remark}

For convex programs presented through inequality constraints, the condition~\eqref{eq: asymptotic Slater} specializes to a Slater-type criterion at infinity.

\begin{corollary} \label{cor: asymptotic Slater g_i}
Under the standing assumptions of this section, let $C=\bigcap_{j=1}^m\{g_j\le 0\}$ for proper, lower semicontinuous, convex functions $g_j:\R^n\to\R\cup\{\infty\}$ with $\dom g_j^\infty=\R^n$. Suppose there exists $d_0\in\dom f^\infty$ such that
\[
    g_j^\infty(d_0)<0 \quad \text{for every } j=1,\ldots,m.
\]
Then the qualification~\eqref{eq: qualification convex programming} holds, and consequently the multiplier rule of Theorem~\ref{thm: multiplier rule at infinity} applies.
\end{corollary}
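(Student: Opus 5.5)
The plan is to reduce the corollary to Proposition~\ref{thm: asymptotic Slater}: show that $d_0\in\inte(C^\infty)$. Since $d_0\in\dom f^\infty$ by hypothesis, this gives \eqref{eq: asymptotic Slater}, hence \eqref{eq: qualification convex programming}, and Theorem~\ref{thm: multiplier rule at infinity} then applies verbatim.

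\textbf{Step 1: a description of $C^\infty$.} For $C=\bigcap_j\{g_j\le 0\}$ nonempty, closed and convex (it is nonempty and unbounded by the standing assumptions), I will use the inclusion
\[
\bigcap_{j=1}^m\{d\mid g_j^\infty(d)\le 0\}\subset C^\infty .
\]
This is the standard level-set recession formula \cite[Theorem~8.7]{rockafellar-convex}. It can also be seen directly. Take $d$ with $g_j^\infty(d)\le 0$ and $x\in C$. By \cite[Theorem~8.5]{rockafellar-convex},
\[
g_j(x+td)\le g_j(x)+t\,g_j^\infty(d)\le 0 \quad\text{for all } t\ge 0,
\]
so $x+td\in C$. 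Only this inclusion is needed, not the equality.

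\textbf{Step 2: $d_0$ is interior.} Each $g_j^\infty$ is sublinear, lower semicontinuous and, by $\dom g_j^\infty=\R^n$, finite everywhere. A finite convex function on $\R^n$ is continuous, so each $g_j^\infty$ is continuous. Since $g_j^\infty(d_0)<0$ for the finitely many $j$, there is $\varepsilon>0$ such that $g_j^\infty(d)<0$ for all $j$ whenever $\|d-d_0\|<\varepsilon$. By Step~1, $B(d_0;\varepsilon)\subset C^\infty$, so $d_0\in\inte(C^\infty)\cap\dom f^\infty$.

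\textbf{Step 3: conclusion.} Proposition~\ref{thm: asymptotic Slater} yields \eqref{eq: qualification convex programming}, and Theorem~\ref{thm: multiplier rule at infinity} gives the multiplier rule.

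The main point requiring care is Step~1: $C^\infty$ must be the recession cone of the intersection of the sublevel sets, and the bound $g_j(x+td)-g_j(x)\le t g_j^\infty(d)$ needs $x\in\dom g_j$. This holds automatically because $x\in C$ forces $g_j(x)\le 0<\infty$. The continuity of $g_j^\infty$ in Step~2 is where the hypothesis $\dom g_j^\infty=\R^n$ is used.
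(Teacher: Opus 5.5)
Your proof is correct and follows the paper's argument. The paper also uses continuity of the finite sublinear functions $g_j^\infty$ to get a ball around $d_0$ on which every $g_j^\infty<0$, places that ball inside $C^\infty$, and concludes via Proposition~\ref{thm: asymptotic Slater}. The only difference is that you prove the needed inclusion $\bigcap_{j}\{g_j^\infty\le 0\}\subset C^\infty$ directly from the bound $g_j(x+td)\le g_j(x)+t\,g_j^\infty(d)$, whereas the paper cites the equality from Rockafellar's recession-cone formula for intersections of sublevel sets.
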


\begin{proof}
Each $g_j^\infty$ is a proper, lower semicontinuous, sublinear function with $\dom g_j^\infty=\R^n$, hence finite-valued and continuous on $\R^n$. Continuity at $d_0$ together with $g_j^\infty(d_0)<0$ gives us $\varepsilon_j>0$ such that $g_j^\infty(d)<0$ for all $\|d-d_0\|<\varepsilon_j$. Setting $\varepsilon:=\min_j\varepsilon_j>0$, the ball $B(d_0;\varepsilon)$ lies in $\bigcap_{j=1}^m\{g_j^\infty<0\}\subseteq\bigcap_{j=1}^m\{g_j^\infty\le 0\}=C^\infty$, where the last equality uses the standard recession-cone formula for intersections of closed convex sublevel sets~\cite[Corollary~8.3.3]{rockafellar-convex} (applicable since $C\supseteq C\cap\dom f$ is nonempty by the standing assumption, so each sublevel set $\{g_j\le 0\}\supseteq C$ is nonempty). Hence $d_0\in\inte(C^\infty)\cap\dom f^\infty$, and Proposition~\ref{thm: asymptotic Slater} applies.
\end{proof}

Proposition~\ref{thm: asymptotic Slater} and Corollary~\ref{cor: asymptotic Slater g_i} parallel the classical Slater condition $\exists\,x_0\in\dom f$ with $g_j(x_0)<0$ for all $j$, which is the standard sufficient condition for the basic constraint qualification at every finite optimal point of a convex program; see, e.g., \cite[Chapter~4]{nambook}. The asymptotic Slater \eqref{eq: asymptotic Slater} plays the analogous role at infinity by ensuring the sum-rule qualification~\eqref{eq: qualification convex programming}.

\begin{corollary} \label{cor: KKT at infinity}
Assume that $C=\bigcap_{j=1}^m C_j$, where $C_j:=\{x\in\R^n\mid g_j(x)\le 0\}$ for proper, lower semicontinuous, convex functions $g_j:\R^n\to\R\cup\{\infty\}$. Assume further that an iterated version of the intersection rule in Proposition~\ref{prop: normal intersection rule} applies to the family $\{C_j\}_{j=1}^m$, so that
\[
    N(\infty;C)\subset \sum_{j=1}^m N(\infty;C_j).
\]
Then under~\eqref{eq: qualification convex programming}, asymptotic stationarity $0\in\partial(f+\delta_C)(\infty)$ admits the KKT-type decomposition
\[
    0=u+v_1+\cdots+v_m,
    \qquad
    u\in\partial f(\infty),\quad v_j\in N(\infty;C_j),\ j=1,\ldots,m.
\]
\end{corollary}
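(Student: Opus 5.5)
This corollary follows by chaining Theorem~\ref{thm: multiplier rule at infinity} with the assumed iterated intersection inclusion. The plan is to first obtain a two-term decomposition from the multiplier rule, and then split the normal-cone part across the sets $C_j$.

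\emph{Step 1: the two-term decomposition.} Assume $0\in\partial(f+\delta_C)(\infty)$ and that \eqref{eq: qualification convex programming} holds. The standing assumptions of the section still apply: $C$ is nonempty, closed and convex, and $C\cap\dom f$ is unbounded. Under these hypotheses Theorem~\ref{thm: multiplier rule at infinity} applies directly. It yields $u\in\partial f(\infty)$ and $v\in N(\infty;C)$ with $u+v=0$. Closedness and convexity of $C$ are inherited from the $C_j$, since each $C_j$ is a sublevel set of a lower semicontinuous convex function.

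\emph{Step 2: splitting $v$.} By the assumed inclusion $N(\infty;C)\subset\sum_{j=1}^m N(\infty;C_j)$, we can write $v=v_1+\cdots+v_m$ with $v_j\in N(\infty;C_j)$. Substituting this into $u+v=0$ gives the stated decomposition $0=u+v_1+\cdots+v_m$.

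The only care point is that each $N(\infty;C_j)$ must be well defined. The definitions in Section~\ref{sec: tangent and normal cones at infinity} are set up under the assumption that $\pi_I(C_j)$ is unbounded. This holds automatically, because $C_j\supset C\supset C\cap\dom f$ and the latter set is unbounded.

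I do not expect a genuine obstacle. All the analytic content sits in the hypothesis that the iterated intersection rule applies, and in the sum rule behind Theorem~\ref{thm: multiplier rule at infinity}. The proof therefore reduces to bookkeeping.
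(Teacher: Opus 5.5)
Your proposal is correct and follows the same route the paper intends. The corollary is stated there without a written proof, but it is evidently obtained by applying Theorem~\ref{thm: multiplier rule at infinity} to get $0=u+v$ with $u\in\partial f(\infty)$ and $v\in N(\infty;C)$, and then splitting $v$ via the assumed inclusion $N(\infty;C)\subset\sum_{j=1}^m N(\infty;C_j)$. Your extra check that each $C_j\supset C\cap\dom f$ is unbounded, so that $N(\infty;C_j)$ is defined, is appropriate bookkeeping.
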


The two-set intersection rule in Proposition~\ref{prop: normal intersection rule} does not automatically extend to arbitrary finite families without further qualification conditions; the assumption above is therefore part of the hypothesis, not a consequence.

Corollary~\ref{cor: KKT at infinity} stops at the normal-cone level: it decomposes asymptotic stationarity into contributions from $\partial f(\infty)$ and the sublevel-set normal cones $N(\infty;C_j)$. The natural KKT closure would replace each $v_j\in N(\infty;C_j)$ by $v_j=\lambda_j a_j$ with $\lambda_j\ge 0$ and $a_j\in\partial g_j(\infty)$, yielding
\[
    0\in\partial f(\infty)+\sum_{j=1}^m\R_+\,\partial g_j(\infty).
\]
This step requires an asymptotic analog of the finite-point sublevel-set formula $N(x;\{g_j\le 0\})=\cone\partial g_j(x)$, which holds at active points under standard constraint qualifications. A naive extension to infinity already fails for simple sublinear constraints: for $g(x)=-x$ on $\R$, $\{g\le 0\}=\R_+$ gives us $N(\infty;\{g\le 0\})=\{0\}$, while $\cone\partial g(\infty)=\R_+(-1)=\R_-$. The precise asymptotic sublevel-set normal-cone formula---and hence the full KKT form---is delicate and is left for future work.

\subsection{Comparison with Classical Recession Criteria}

The qualification condition in Theorem~\ref{thm: multiplier rule at infinity} and the classical recession-cone criteria for attainment play different roles. The condition
\[
    \partial^\infty f(\infty)\cap\bigl(-N(\infty;C)\bigr)=\{0\}
\]
is a normal-separation condition at infinity: it guarantees that the subdifferential of $f+\delta_C$ at infinity can be estimated by a sum of the objective and constraint contributions. By contrast, classical attainment criteria are usually formulated in terms of recession directions, for example through $C^\infty$ and $f^\infty$, and are designed to rule out noncompact minimizing behavior; see, e.g., \cite[Theorem~27.3]{rockafellar-convex}. Therefore, Theorem~\ref{thm: multiplier rule at infinity} should not be viewed as a replacement for recession-cone attainment criteria. Rather, it provides a convex-analytic multiplier interpretation of asymptotic stationarity once such stationarity is known.

\begin{example}
Let $f(x):=\langle c,x\rangle$ on $\R^n$ for some $c\in\R^n$, and let $C\subset\R^n$ be a nonempty unbounded closed convex set. Then $f^\infty=f$, $\dom f^*=\{c\}$, $\partial f(\infty)=\{c\}$, and $\partial^\infty f(\infty)=\{0\}$, so~\eqref{eq: qualification convex programming} is trivially satisfied. Theorem~\ref{thm: multiplier rule at infinity} reads
\[
    \partial(f+\delta_C)(\infty)\subset \{c\}+N(\infty;C).
\]
Thus the asymptotic stationarity condition
\[
    0\in\partial(\langle c,\cdot\rangle+\delta_C)(\infty)
\]
implies
\[
    -c\in N(\infty;C).
\]
This identifies the negative objective direction as an asymptotic normal direction of the feasible set. This condition should be distinguished from the classical recession-cone boundedness condition $\langle c,d\rangle\ge 0$ for all $d\in C^\infty$, because in general $N(\infty;C)$ need not coincide with $(C^\infty)^\circ$. The example therefore illustrates the geometric meaning of the multiplier rule at infinity rather than replacing the standard recession analysis of linear programs.
\end{example}

The next example shows that the multiplier rule recovers a familiar dual-feasibility condition in a non-trivial convex program, namely the basis-pursuit problem of $\ell_1$-minimization.

\begin{example} \label{ex: l1 basis pursuit}
Let $f(x):=\|x\|_1$ on $\R^n$ and $C:=\{x\in\R^n\mid Ax=b\}$, where $A:\R^n\to\R^m$ is linear with $\ker A\neq\{0\}$ (so $C$ is an unbounded affine subspace) and $b\in\rge A$ (so $C$ is nonempty). The function $f$ is finite-valued sublinear with $\dom f^\infty=\R^n$, and by Theorem~\ref{prop: geometry of subdifferential},
\[
    \partial^\infty f(\infty)=\{0\},
    \qquad
    \partial f(\infty)=\dom f^*=\mathbb{B}_\infty,\qquad\text{where}\quad\mathbb{B}_\infty:=\bigl\{u\in\R^n\mid \|u\|_\infty\le 1\bigr\}.
\]
Since $C$ is an affine subspace, $T(x;C)=\ker A$ for every $x\in C$, hence
\[
    T(\infty;C)=\ker A,
    \qquad
    N(\infty;C)=(\ker A)^\circ=\rge(A^\top).
\]
The qualification condition~\eqref{eq: qualification convex programming} is trivially satisfied since $\partial^\infty f(\infty)=\{0\}$, and Theorem~\ref{thm: multiplier rule at infinity} yields
\[
    \partial(f+\delta_C)(\infty)\subset \mathbb{B}_\infty+\rge(A^\top).
\]
The asymptotic stationarity condition $0\in\partial(f+\delta_C)(\infty)$ thus implies the existence of $\lambda\in\R^m$ with
\[
    -A^\top\lambda\in\mathbb{B}_\infty,
    \qquad\text{equivalently,}\qquad
    \|A^\top\lambda\|_\infty\le 1.
\]
This is the standard dual-feasibility condition associated with the $\ell_1$ basis-pursuit program
\[
    \inf\bigl\{\|x\|_1\mid Ax=b\bigr\}.
\]
Thus, in this example, the multiplier rule at infinity recovers the dual-feasibility geometry associated with the affine constraint. It should not be interpreted as a full KKT optimality condition for a finite minimizer; rather, it shows how asymptotic stationarity, when present, decomposes into the polar geometry of the $\ell^1$-unit ball and the normal geometry of the affine feasible set.
\end{example}

\begin{remark} \label{rem: necessary perspective}
Theorem~\ref{thm: multiplier rule at infinity} is a decomposition statement that takes asymptotic stationarity as input. Read in the contrapositive, it provides a check for the absence of asymptotic stationarity: if $u+v\neq 0$ for every $u\in\partial f(\infty)$ and every $v\in N(\infty;C)$, then $0\notin\partial(f+\delta_C)(\infty)$, indicating that no asymptotic stationarity occurs at infinity. Such a check is informative when $\partial f(\infty)$ and $N(\infty;C)$ admit explicit descriptions, as in the sublinear and the affine-subspace regimes treated above. A systematic analysis of when $\partial(f+\delta_C)(\infty)$ actually contains $0$, in terms of recession-cone descent or other classical asymptotic conditions, is beyond the scope of this paper and is left for future work.
\end{remark}

\section{Conclusion}
We have developed a convex-analytic theory of tangent cones, normal cones, and subdifferentials at infinity. For convex sets, we obtained tangent--normal polarity at infinity, recession-based characterizations with a checkable polyhedral criterion for the equality $T(\infty_I;\Omega)=\Omega^\infty$, and product, intersection, and linear-image calculus rules. For convex functions, the epigraph-based construction yields closed convex subdifferentials at infinity, sandwich inclusions involving the Fenchel conjugate and the recession function, and sum, chain, max, infimal-convolution, marginal-function, and Clarke-type directional-derivative rules. As an application, we obtained a multiplier-type decomposition of asymptotic stationarity and a corresponding attainment criterion for convex programs over unbounded feasible sets.

Three directions seem natural for further work:
\begin{itemize}
    \item \emph{Directional theory at infinity.} Developing directional subdifferentials, directional normal cones, and refined calculus rules in the convex setting. The Clarke-type directional derivative of Section~\ref{subsec: clarke deriv infinity} is a first step.
    \item \emph{Exact calculus for infimal convolution and marginal/value functions.} Sharpening the inclusions obtained here into exact formulas under verifiable qualification conditions, including an asymptotic analog of the sublevel-set normal-cone formula needed to close the KKT-type decomposition of Corollary~\ref{cor: KKT at infinity}.
    \item \emph{Recession-function and perturbation/conjugation connections.} Linking the convex-analytic objects at infinity to asymptotic functions, conjugate duality, and perturbation theory in the spirit of classical convex analysis.
\end{itemize}

\medskip
{\noindent \textbf{Acknowledgments.}
   This work was supported by National Foundation for Science and Technology Development of Vietnam (NAFOSTED) grant number 101.01-2025.14.
}

\medskip
{\noindent \textbf{Declarations}
\begin{itemize}
	\item Competing interests: The authors have no relevant financial or non-financial interests to disclose.
	\item Data Availability: Data sharing is not applicable to this article as no datasets were generated or analyzed during the current study.
	\item Author contributions: The authors contributed equally to this work.
\end{itemize}
} 

\bibliographystyle{mystyle}
\bibliography{References}

@book{aubin,
	year = {1990},
	publisher = {Birkh\"{a}user Boston},
	author = {Jean-Pierre Aubin and H{\'{e}}l{\`{e}}ne Frankowska},
	title = {Set-valued analysis}
}

@book{jahnbook,
	year = {2004},
	publisher = {Springer Berlin-Heidelberg},
	author = {Johannes Jahn},
	title = {Vector Optimization}
}

@book{nambook,
  author = {Boris S. Mordukhovich and Nguyen Mau Nam},
  title = {An Easy Path to Convex Analysis and Applications},
  publisher = {Springer International Publishing},
  year = {2014}
}

@book{rockafellar-convex,
  author    = {R. Tyrrell Rockafellar},
  title     = {Convex Analysis},
  publisher = {Princeton University Press},
  year      = {1970}
}

@article{tung23a,
	author = {Nguyen, Minh Tung and Pham, Tien-Son},
	title = {Clarke's Tangent Cones, Subgradients, Optimality Conditions, and the Lipschitzness at Infinity},
	journal = {SIAM J. Optim.},
	volume = {34},
	number = {2},
	pages = {1732-1754},
	year = {2024}
}

@article{tung23b,
	title = {Subdifferentials at infinity and applications in optimization},
	volume = {214},
	number = {1–2},
	journal = {Math. Program.},
	publisher = {Springer Science and Business Media LLC},
	author = {Kim,  Do Sang and Nguyen, Minh Tung and Pham,  Tien-Son},
	year = {2025},
	pages = {409–440}
}

@book{rockafellar-wets,
  author    = {R. Tyrrell Rockafellar and Roger J.-B. Wets},
  title     = {Variational Analysis},
  series    = {Grundlehren der Mathematischen Wissenschaften},
  volume    = {317},
  publisher = {Springer},
  year      = {1998},
}

@book{mordukhovich-vol1,
  author    = {Boris S. Mordukhovich},
  title     = {Variational Analysis and Generalized Differentiation I: Basic Theory},
  series    = {Grundlehren der Mathematischen Wissenschaften},
  volume    = {330},
  publisher = {Springer},
  year      = {2006}
}

@article{tuyen-bae-kim-minimax,
  author       = {Nguyen Van Tuyen and Kwan Deok Bae and Do Sang Kim},
  title        = {Optimality Conditions at Infinity for Nonsmooth Minimax Programming Problems with Some Applications},
  journal      = {Journal of Optimization Theory and Applications},
  year         = {2025},
  volume       = {205},
  number       = {2},
  articlenumber= {32},
  doi          = {10.1007/s10957-025-02652-1}
}

@misc{kim-pham-tung-tuyen-coderivative,
  author       = {Do Sang Kim and Tien-Son Pham and Nguyen Minh Tung and Nguyen Van Tuyen},
  title        = {Coderivatives at Infinity of Set-Valued Mappings with Applications to Optimization},
  year         = {2025},
  note         = {Preprint}
}

@misc{kien-tuyen-nghi-directional,
  author       = {Le Ngoc Kien and Nguyen Van Tuyen and Tran Van Nghi},
  title        = {Directional Subdifferentials at Infinity and Its Applications},
  year         = {2025},
  eprint       = {2510.09179},
  archivePrefix= {arXiv},
  primaryClass = {math.OC}
}

@article{anh-hung-relative,
  author       = {Nguyen Le Hoang Anh and Nguyen Canh Hung},
  title        = {Normal Cone and Subdifferential with Respect to a Set at Infinity and Their Applications},
  journal      = {Journal of Optimization Theory and Applications},
  year         = {2025},
  volume       = {206},
  number       = {3},
  articlenumber= {77},
  doi          = {10.1007/s10957-025-02761-x}
}

@article{anh-hung-jgo,
  author       = {Nguyen Canh Hung and Nguyen Le Hoang Anh},
  title        = {Optimality Conditions and Solution Analysis at Infinity for Nonsmooth Optimization Problems},
  journal      = {Journal of Global Optimization},
  year         = {2026},
  volume       = {94},
  number       = {2},
  pages        = {569--591},
  doi          = {10.1007/s10898-026-01590-0}
}

@article{moreau1962,
  author  = {Moreau, Jean-Jacques},
  title   = {D{\'e}composition orthogonale d'un espace hilbertien selon deux c{\^o}nes mutuellement polaires},
  journal = {Comptes Rendus de l'Acad{\'e}mie des Sciences de Paris},
  volume  = {255},
  year    = {1962},
  pages   = {238--240}
}
\end{document}